\theoremstyle{plain}
\newtheorem{theorem}{Theorem}[section]
\newtheorem{lemma}[theorem]{Lemma}
\newtheorem{proposition}[theorem]{Proposition}
\newtheorem{corollary}[theorem]{Corollary}
\theoremstyle{definition}
\newtheorem{question}[theorem]{Question}
\newtheorem{definition}[theorem]{Definition}
\theoremstyle{remark}
\newtheorem{remark}[theorem]{Remark}
\def \R{\mathbb{R}}
\def \Z{\mathbb{Z}}
\def \S{\Sigma}
\begin{document}

\title{Relative Knot Invariants: Properties and Applications}

\author{Georgi D. Gospodinov}

\address{Franklin W. Olin College of Engineering, 1000 Olin Way, Needham, MA 02492}

\email{georgi.gospodinov@olin.edu}

\keywords{Legendrian knots, relative invariants, contact connected sum}

\begin{abstract}
We state Bennequin inequalities in the relative case, and show that the relative invariants are additive under relative connected sums. We show they exhibit similar limitations as their classical analogues. We study relatively Legendrian simple knots and give some classification results.
\end{abstract}

\maketitle

\section{Introduction}

Classifying Legendrian and transverse knots in contact 3-manifolds has been an important part of the recent development of 3-manifold topology. One of the breakthroughs in this direction came about with the work of GIroux and the theory of convex surfaces (see \cite{aebisher, etnyrehonda:knots1, etnyrehonda:knots2, giroux:convex}). Ideas of convex surface theory are usually applied to null-homologous knots in a contact 3-manifold. Our goal is to apply them in the case when a knot is homologous to another ``reference" knot.

In \cite{georgi}, we defined the following relative invariants.

\begin{definition}
Let $K$ and $J$ be homologous Legendrian knots in a contact 3-manifold $(M,\xi)$ oriented accordingly with $K\cup J=\partial\Sigma$ for an oriented embedded Seifert surface $\Sigma$ so that $[\partial\Sigma]=[K]-[J]$. Define the {\em Thurston-Bennequin invariant of $K$ relative to $J$} by $$\widetilde{tb}_\Sigma(K,J):=tw_K(\xi,Fr_{\Sigma})-tw_{J}(\xi,Fr_{\Sigma}),$$ where $Fr_\Sigma$ denotes the Seifert framing that $K$ (resp. $J$) inherits from $\Sigma$, and $tw(\xi,Fr_\Sigma)$ denotes the number of $2\pi$-twists (with sign) of the contact framing relative to $Fr_\Sigma$ along $K$ or $J$. For push-offs $K'$ and $J'$ of $K$ and $J$ in the direction normal to the contact planes, $\widetilde{tb}_\Sigma(K,J)=K'\cdot\Sigma-J'\cdot\Sigma=lk_\Sigma(K',K)-lk_\Sigma(J',J)$.
\end{definition}

\begin{definition} 
Let $K$ and $J$ be homologous Legendrian knots in a contact 3-manifold $(M,\xi)$ oriented accordingly with $K\cup J=\partial\Sigma$ for an oriented embedded Seifert surface $\Sigma$ so that $[\partial\Sigma]=[K]-[J]$. The restriction to $K$ of the trivialized contact 2-plane field $\xi\rvert_\Sigma$ gives a map $\sigma:\xi\rvert_K\rightarrow K\times\R^2$, under which a non-zero tangent vector field $v_K$ to $K$ traces out a path of vectors in $\R^2$. We can then compute the winding number $w_\sigma(v_K)$ and similarly for $J$. Then define the \textit{relative rotation number of $K$} by $$\widetilde{r}_\Sigma(K,J):=w_\sigma(v_K)-w_\sigma(v_J).$$ Equivalently, $\widetilde{r}_\Sigma(K,J)=e(\xi,v_K\cup v_J)([\Sigma])$.
\end{definition}

\begin{definition} 
Let $K$ and $J$ be homologous transverse knots in a contact 3-manifold $(M,\xi)$ oriented accordingly with $K\cup J=\partial\Sigma$ for an oriented embedded Seifert surface $\Sigma$ so that $[\partial\Sigma]=[K]-[J]$. The contact 2-plane field $\xi$ is trivial over $\Sigma$, so there exists a nonzero vector field $v$ in $\xi\rvert_\Sigma$. Take $K'$ and $J'$ to be the push offs of $K$ and $J$ in the direction of $v$. Then define the {\em relative self-linking number} of $K$ with respect to $J$ by 
$$\widetilde{sl}_\Sigma (K,J):=K'\cdot\Sigma-J'\cdot\Sigma.$$
\end{definition}

In what follow, we establish relative versions of the Bennequin inequalities and develop some prototypical examples. We describe relative connected sums of Legendrian and transverse knots and study the additivity of the relative invariants, following the foundational work of Etnyre-Honda \cite{etnyrehonda:knots2}. We show that the relative invariants exhibit similar limitations as their classical analogues, in particular, the relative Thurston-Bennequin invariant and the relative rotation number are not able to distinguish relative connected sums of the Chekanov knots \cite{chekanov} which are smoothly isotopic, have equal relative invariants, but are not Legendrian isotopic. We study basic knot types which can be classified by their relative invariants, and give a generalization of the structure theorem of Etnyre-Honda \cite{etnyrehonda:knots2} which classifies Legendrian knots in a relative knot type in terms of their relative connected sum prime components.

\section{Acknowledgements} 

I would like to express deep gratitude to John Etnyre for his guidance and help with many fundamental and technical aspects of this work. I am also grateful to my advisor Danny Ruberman for his patience and support throughout my graduate years when the ideas of this paper were developed.

\section{Background}

We briefly recall some facts from contact geometry and convex surface theory. This is far from a complete introduction to the subject, and the reader should consult the more complete treatment in \cite{aebisher, etnyrehonda:knots1, etnyrehonda:knots2, giroux:convex}.

\begin{definition} An (transversely) oriented positive contact structure $\xi$ on $M$ is an oriented 2-plane field $\xi\subset TM$ for which there is a 1-form $\alpha$ such that $\xi=\ker\alpha$ and $\alpha\wedge d\alpha>0$ (recall that $M$ is oriented).
\end{definition}

Two contact structures $\xi_1,\xi_2$ on a 3-manifold $M$ are {\em homotopic} if they are homotopic as 2-plane distributions. They are {\em isotopic} if they are homotopic through contact structures. They are {\em contactomorphic} if there is a diffeomorphism $f: M\rightarrow M$ such that $f$ sends one of the contact structures to the other, i.e., $f_\ast(\xi_1)=\xi_2$. Then $f$ is called a {\em contactomorphism}.

Perturbing a contact structure occurs only through perturbing the ambient manifold, as the theorem below states.

\begin{theorem}[Gray Stability] Given a 1-parameter family of contact structures $\xi_t, t\in[0,1]$, there is a 1-parameter family of diffeomorphisms $f_t:M\rightarrow M$ such that $(f_t)_\ast(\xi_0)=\xi_t$ for all $t$.
\end{theorem}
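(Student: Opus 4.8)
The plan is to prove this via Moser's trick (the time-dependent isotopy method). First I would choose a smooth family of defining $1$-forms $\alpha_t$ with $\xi_t=\ker\alpha_t$; such a family exists because each $\xi_t$ is coorientable, being transversely oriented as a positive contact structure. I would seek the isotopy $f_t$ as the flow of a time-dependent vector field $X_t$, so that $\frac{d}{dt}f_t=X_t\circ f_t$ and $f_0=\mathrm{id}$. Since the condition $(f_t)_\ast\xi_0=\xi_t$ asks only that $f_t$ match the $2$-plane fields, it is equivalent to the existence of a positive function $\lambda_t$ with $f_t^\ast\alpha_t=\lambda_t\,\alpha_0$.

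Next I would differentiate this relation in $t$. Using the flow identity $\frac{d}{dt}(f_t^\ast\alpha_t)=f_t^\ast(\dot\alpha_t+\mathcal{L}_{X_t}\alpha_t)$ together with Cartan's formula $\mathcal{L}_{X_t}\alpha_t=\iota_{X_t}d\alpha_t+d(\iota_{X_t}\alpha_t)$, and writing $\mu_t=(\dot\lambda_t/\lambda_t)\circ f_t^{-1}$ to absorb the pullback, the differentiated equation reduces—after requiring $X_t\in\xi_t$, so that $\iota_{X_t}\alpha_t=0$—to the pointwise equation $\dot\alpha_t+\iota_{X_t}d\alpha_t=\mu_t\,\alpha_t$. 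The remaining task is to solve this for an unknown vector field $X_t$ tangent to $\xi_t$ together with an unknown function $\mu_t$.

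The key step is that this equation has a unique solution, and here the contact condition $\alpha_t\wedge d\alpha_t>0$ is exactly what is needed. Contracting the equation with the Reeb field $R_t$ (defined by $\alpha_t(R_t)=1$ and $\iota_{R_t}d\alpha_t=0$) and using $\iota_{X_t}\alpha_t=0$ pins down $\mu_t=\dot\alpha_t(R_t)$. With $\mu_t$ so determined, the $1$-form $\mu_t\alpha_t-\dot\alpha_t$ annihilates $R_t$; since $d\alpha_t$ restricts to a nondegenerate (symplectic) form on $\xi_t$ precisely by the contact condition, the map $X\mapsto\iota_X d\alpha_t$ is an isomorphism from $\xi_t$ onto the $1$-forms vanishing on $R_t$, yielding a unique $X_t\in\xi_t$ solving $\iota_{X_t}d\alpha_t=\mu_t\alpha_t-\dot\alpha_t$.

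Finally I would integrate the resulting smooth time-dependent vector field $X_t$ to obtain the isotopy $f_t$. The main obstacle I anticipate is ensuring that the flow is defined for all $t\in[0,1]$ on all of $M$; this is automatic when $M$ is closed, so that $X_t$ is complete, which is the setting relevant here. Unwinding the construction, the flow $f_t$ then satisfies $f_t^\ast\alpha_t=\lambda_t\alpha_0$, where $\lambda_t$ is recovered by integrating $\dot\lambda_t/\lambda_t=\mu_t\circ f_t$ from $\lambda_0=1$, and hence $(f_t)_\ast\xi_0=\xi_t$ for all $t$, as desired.
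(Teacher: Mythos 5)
Your proposal is correct: it is the standard Moser-trick proof of Gray stability, and every step checks out --- the reduction to $f_t^\ast\alpha_t=\lambda_t\alpha_0$, the determination of $\mu_t=\dot\alpha_t(R_t)$ by contracting with the Reeb field, and the unique solvability of $\iota_{X_t}d\alpha_t=\mu_t\alpha_t-\dot\alpha_t$ for $X_t\in\xi_t$ via the nondegeneracy of $d\alpha_t\rvert_{\xi_t}$ (both sides annihilate $R_t$, so the equation on $\xi_t$ suffices). The paper itself states this theorem as background with no proof, citing Gray's original (quite different, deformation-theoretic) paper, so there is nothing internal to compare against; your argument is the canonical modern one found in, e.g., Geiges. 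Your caveat about completeness of the flow is well placed: the statement as written omits a compactness hypothesis on $M$, which is genuinely needed for the time-dependent vector field $X_t$ to integrate up to $t=1$, and is implicitly assumed in the paper's setting of closed contact $3$-manifolds.
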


A smooth oriented embedding of $S^1$ in a contact 3-manifold $(M,\xi)$ is called a \textit{Legendrian knot} if it is everywhere tangent to the contact planes. It is a {\em transverse knot} if it is everywhere transverse to the contact planes.

If $K\subset\Sigma$ is a simple closed Legendrian curve in an embedded surface $\Sigma$ in a contact 3-manifold $(M,\xi)$, then $tw_\Sigma(K)$ is the {\em twisting} of $\xi$ along $K$ relative to the Seifert framing $Fr_\Sigma$. That is, both $\xi$ and $\Sigma$ give $K$ a framing (a trivialization of its normal bundle) by taking a vector field normal to $K$ and tangent to $\xi$ or $\Sigma$, respectively (note that $\xi$ is trivializable over $\Sigma$). Then $tw_\Sigma(K)$ measures the number of $2\pi$-twists (as we traverse the oriented $K$) of the vector field corresponding to $\xi$ relative to the vector field coming from $\Sigma$. By convention, left-handed twists are negative and right-handed twists are positive. Equivalently, take a push-off $K'$ of $K$ along a vector field transverse to $\xi$. Then $tw_\Sigma(K)$ is equal the signed intersection of $K'$ with $\Sigma$, $tw_\Sigma(K)=K'\cdot\Sigma$, or the linking number of $K$ with $K'$.

Let $\Sigma$ be an oriented compact surface embedded in a contact 3-manifold $(M,\xi)$. If $\partial \Sigma$ is nonempty, assume that it is Legendrian. Then the line field $l_p=\xi_p\cap T_p\S$, $p\in\Sigma$, integrates to a singular foliation on $\Sigma$ called the \textit{characteristic foliation}, denoted $\Sigma_\xi$.

The contact structure $\xi$ is called {\em overtwisted} if there is an embedded disc $D$ such that $D_\xi$ contains a closed leaf. Such a disc is called an overtwisted disc. If there are no overtwisted discs in $\xi$, then the contact structure is called {\em tight}.

Now we turn to the theory of convex surfaces, which have been a very useful tool in the study of 3-dimensional contact manifolds.

\begin{definition} Let $\Sigma$ be an oriented compact surface embedded in a contact 3-manifold $(M,\xi)$. If $\partial \Sigma$ is nonempty, assume that it is Legendrian. Then $\Sigma$ is called \textit{convex} if there exists a \textit{contact vector field} $X$ that is transverse to $\Sigma$ (a contact vector field $X$ is a vector field whose flow preserves the contact structure).
\end{definition}

Any closed surface is $C^\infty$-close to a convex surface. If $\Sigma$ has Legendrian boundary with $tw_\Sigma(K)\leq 0$ for all components $K$ of $\partial\Sigma$, then after a $C^0$-small perturbation of $\Sigma$ near the boundary (but fixing the boundary), $\Sigma$ will be $C^\infty$-close to a convex surface.

\begin{definition} Let $\Sigma$ be a convex surface with $X$ a transverse contact vector field. The set $\Gamma_\Sigma=\{p\in \Sigma\ \rvert\ X_p\in\xi_p\}$ is an embedded multi-curve on $\Sigma$ called the {\em dividing} set. 
\end{definition}

\begin{proposition} Let $\mathcal{F}$ be a singular foliation on $\Sigma$ and let $\Gamma$ be a multi-curve on $Sigma$. The multi-curve is said to {\em divide} $\mathcal{F}$ if
\begin{enumerate}[(a)]
\item $\Gamma_\Sigma$ is transverse to $\mathcal{F}$
\item $\Sigma\setminus \Gamma_\Sigma=\Sigma_+\amalg \Sigma_-$
\item there is a vector field $X$ and a volume form $\omega$ on $\Sigma$ such that 
\begin{enumerate}
		\item[(i)] $X$ directs $\mathcal{F}$ (that is, it is tangent to $\mathcal{F}$ at non-singular points and $X=0$ at the singular points of $\mathcal{F}$) 
		\item[(ii)] the flow of $X$ expands $\omega$ on $\Sigma_+$ and contracts $\omega$ on $\Sigma_-$
		\item[(iii)] and $X$ points transversely out of $\Sigma_+$.
		\end{enumerate}
\end{enumerate}
\end{proposition}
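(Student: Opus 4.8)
I read this proposition as the assertion underlying the definition it records: the dividing set $\Gamma_\Sigma$ of a convex surface $\Sigma$ divides its characteristic foliation $\Sigma_\xi$ in the sense of (a)--(c). The plan is to encode all three conditions in a single positive area form on $\Sigma$ coming from the contact condition, and then to read off (a), (b) and (c) from one scalar inequality.

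First I would fix a contact form $\alpha$ with $\xi=\ker\alpha$ and a contact vector field $v$ transverse to $\Sigma$, both supplied by convexity. Flowing along $v$ identifies a collar $\Sigma\times(-\varepsilon,\varepsilon)$ in which $v=\partial_t$, and invariance of $\xi$ under the flow lets me normalize $\alpha$ to its $t$-invariant representative $\alpha=\beta+u\,dt$, where $\beta=\alpha|_\Sigma$ and $u=\alpha(v)$ live on $\Sigma$. Then $\Gamma_\Sigma=\{v\in\xi\}=u^{-1}(0)$ and $\Sigma_\xi=\ker\beta$, singular exactly where $\beta=0$. Expanding $\alpha\wedge d\alpha>0$ in the collar, the $\beta\wedge d\beta$ and $dt\wedge dt$ terms drop out and the condition becomes precisely that
\[
\eta:=u\,d\beta+\beta\wedge du
\]
is a positive area form on $\Sigma$. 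Fixing a reference area form $\omega_0$, writing $d\beta=f\,\omega_0$, and taking the foliation director $w$ with $\iota_w\omega_0=\beta$ (so $\beta(w)=0$ and $w=0$ only at singularities), the pointwise identity $d\psi\wedge\iota_w\omega_0=w(\psi)\,\omega_0$ converts positivity of $\eta$ into the master inequality
\begin{equation}
uf-w(u)>0\qquad\text{on all of }\Sigma. \tag{$\star$}
\end{equation}

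Conditions (a), (b) and (c)(iii) now fall out of $(\star)$. On $\Gamma_\Sigma=\{u=0\}$ it reads $w(u)<0$; hence $du\neq0$ there, so $0$ is a regular value and $\Sigma\setminus\Gamma_\Sigma=\Sigma_+\amalg\Sigma_-$ with $\Sigma_\pm=\{\pm u>0\}$ (giving (b)), the director $w$ is transverse to $\Gamma_\Sigma$ (giving (a)), and $u$ strictly decreases along $w$ across $\Gamma_\Sigma$, so the foliation points out of $\Sigma_+$ (giving (c)(iii)). At a singular point $w=0$, so $(\star)$ forces $uf>0$; since $f=\operatorname{div}_{\omega_0}w$ this places every source of $\Sigma_\xi$ in $\Sigma_+$ and every sink in $\Sigma_-$, exactly the dynamical picture one wants.

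The substantive step is (c)(ii): I must exhibit an area form $\omega$ and a director $X$ of $\Sigma_\xi$ whose flow expands $\omega$ on $\Sigma_+$ and contracts it on $\Sigma_-$. Writing $X=\phi w$ and $\omega=\rho\,\omega_0$ with $\phi,\rho>0$ and setting $\psi=\rho\phi$, a one-line computation gives $L_X\omega=\bigl(w(\psi)+f\psi\bigr)\omega_0$, so the whole problem reduces to finding a strictly positive $\psi$ with $w(\psi)+f\psi$ of the same sign as $u$. This is a first-order linear ODE for $\psi$ along the leaves of $\Sigma_\xi$, constrained by $(\star)$, and it is where the real difficulty lies: the naive choices ($\psi\equiv1$, equivalently $\omega=\eta$) only make the divergence equal $f$, whose sign $(\star)$ controls at the singular points but not on the regular part. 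I expect to handle this by first perturbing $\Sigma_\xi$ rel $\Gamma_\Sigma$ to a Morse--Smale foliation --- which changes neither the germ of $\xi$ along $\Sigma$ nor the dividing data --- so that the singularities are the sources and sinks already located in $\Sigma_\pm$ and there are no closed leaves, whence every leaf flows from $\Sigma_+$ to $\Sigma_-$ across $\Gamma_\Sigma$. Integrating the linear ODE leafwise outward from the sources with an integrating factor, and checking that the solutions glue to a global positive $\psi$ with the required sign, finishes (c)(ii). The two places demanding care are the orientation bookkeeping in the collar that fixes the sign of $\eta$ in $(\star)$, and the uniform positivity and sign control of $\psi$ as leaves cross $\Gamma_\Sigma$.
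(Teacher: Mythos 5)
You were asked to prove something the paper never proves: this ``proposition'' is in fact Giroux's \emph{definition} of a multi-curve dividing a singular foliation, mislabeled (note the clause ``is said to \emph{divide}''), and the paper supplies no proof of it. Your reinterpretation --- that the dividing set $\Gamma_\Sigma$ of a convex surface actually divides the characteristic foliation $\Sigma_\xi$ --- is the right nontrivial content to attach to the statement, and the core of your argument is the correct, classical one: in a vertically invariant collar write $\alpha=\beta+u\,dt$, observe $\beta\wedge d\beta=0$ on a surface so the contact condition reduces to positivity of $\eta=u\,d\beta+\beta\wedge du$, and with $d\beta=f\,\omega_0$, $\iota_w\omega_0=\beta$ this is your inequality $(\star)$: $uf-w(u)>0$. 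Your derivations of (a), (b), (c)(iii), and the placement of sources in $\Sigma_+$ and sinks in $\Sigma_-$ from $(\star)$ are all sound.

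The gap is in your plan for (c)(ii), which is both flawed and unnecessary. First, the Morse--Smale perturbation step is circular: by Giroux, the characteristic foliation determines the germ of $\xi$ along $\Sigma$, so perturbing $\Sigma_\xi$ rel $\Gamma_\Sigma$ \emph{does} change the germ (one must move the surface), and the flexibility theorem that would license replacing the foliation applies only to foliations \emph{already divided} by $\Gamma_\Sigma$ --- precisely the property you are trying to establish. Second, gluing leafwise solutions of $w(\psi)+f\psi\sim u$ into a single smooth, uniformly positive $\psi$ is genuinely problematic at saddle separatrices and nontrivial limit sets, and you give no argument for it. None of this is needed: keep $X=w$ (which vanishes exactly at the singularities, giving (c)(i)) and on $\Sigma_\pm$ take $\omega=\pm\,\omega_0/u$, a positive area form there. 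Then $L_X\omega=d\bigl(\pm\beta/u\bigr)=\pm\bigl(u\,d\beta+\beta\wedge du\bigr)/u^2=\pm\,\eta/u^2$, which by $(\star)$ is positive on $\Sigma_+$ and negative on $\Sigma_-$ --- exactly (c)(ii), in one line; in your notation this is simply $\psi=\pm 1/u$, with no perturbation and no ODE. The expansion/contraction condition constrains $\omega$ only on $\Sigma\setminus\Gamma_\Sigma=\Sigma_+\amalg\Sigma_-$, so the blow-up of $\omega$ along $\Gamma_\Sigma$ is harmless (this is Giroux's original construction, and is how the definition is meant to be read); with that substitution your proof is complete and agrees with the standard argument in the literature the paper cites.
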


\begin{theorem}[Giroux's Criterion] Let $\Sigma$ be a convex surface in a contact 3-manifold $(M,\xi)$. Then $\Sigma$ has a tight neighborhood in $M$ if and only if $\Sigma\neq S^2$ and $\Gamma_\Sigma$ contains no contractible curves or $\Sigma= S^2$ and $\Gamma_\Sigma$ is connected.
\end{theorem}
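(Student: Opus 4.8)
The plan is to reduce the entire statement to the structure of an $I$-invariant neighborhood of $\Sigma$ and then treat the two implications separately, with Giroux's flexibility theorem as the engine throughout. Since $\Sigma$ is convex with transverse contact vector field $X$, the flow of $X$ identifies a neighborhood of $\Sigma$ in $M$ with $\Sigma\times\mathbb{R}$ carrying an $\mathbb{R}$-invariant contact structure; writing the contact form as $\alpha=\beta+u\,dt$ with $\beta\in\Omega^1(\Sigma)$ and $u\colon\Sigma\to\mathbb{R}$ independent of $t$, the condition $\alpha\wedge d\alpha>0$ becomes $u\,d\beta+\beta\wedge du>0$, and one reads off $\Gamma_\Sigma=\{u=0\}$ together with $\Sigma_{\pm}=\{\pm u>0\}$ (since $\alpha(\partial_t)=u$). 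The point I would invoke repeatedly is Giroux flexibility: the contact germ along $\Sigma$, hence the tightness of the neighborhood, depends only on the isotopy class of $\Gamma_\Sigma$, because any singular foliation divided by $\Gamma_\Sigma$ is realized as a characteristic foliation after a $C^0$-small isotopy through convex surfaces fixing $\Gamma_\Sigma$.

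For the ``overtwisted'' direction I would prove the contrapositive. Suppose the dividing condition fails: if $\Sigma=S^2$ with $\Gamma_\Sigma$ disconnected then some complementary region is a disc bounded by a single dividing circle, and in the non-sphere case a contractible component exists by hypothesis. Either way I would pass to an innermost contractible dividing curve $\gamma$, bounding a disc $D\subset\Sigma$ with $\mathrm{int}\,D\cap\Gamma_\Sigma=\emptyset$, so that $\mathrm{int}\,D\subset\Sigma_+$. Applying the Legendrian realization principle, I would produce a Legendrian unknot $L$ bounding a disc $D'\subset\Sigma$ with $D'\cap\Gamma_\Sigma=\gamma$; since $L$ is disjoint from $\Gamma_\Sigma$ one gets $tw_{D'}(L)=-\tfrac{1}{2}\#(L\cap\Gamma_\Sigma)=0$, and because $L$ bounds $D'$ its Seifert framing is the disc framing, so $tb(L)=tw_{D'}(L)=0$. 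An unknot in a tight contact structure satisfies $tb\le-1$, so this contradiction exhibits an overtwisted disc in the neighborhood. The one technical point to discharge here is that $L$ must be \emph{non-isolating} for the realization principle to apply, which in general forces one to enlarge the realized family by auxiliary curves; this bookkeeping is routine but must be carried out case by case.

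For the ``tight'' direction I assume $\Gamma_\Sigma$ has no contractible component (or, on $S^2$, that $\Gamma_\Sigma$ is connected). I would first note that this excludes disc components of $\Sigma_{\pm}$, since such a region would be bounded by a single contractible dividing circle; hence every component of $\Sigma\setminus\Gamma_\Sigma$ is a non-disc surface, except in the model sphere case where the two complementary discs give the standard tight neighborhood of a convex $S^2$, whose tightness is classical. In the general case the plan is to realize the invariant neighborhood as a $C^0$-small perturbation of a taut foliation: using the $\Sigma_{\pm}$ decomposition and the absence of disc (hence of Reeb-type) components, build a taut foliation of $\Sigma\times I$, perturb it to a contact structure by the Eliashberg--Thurston theorem so that the result is tight, and then match that contact structure with $\xi$ via Giroux flexibility together with Gray stability. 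Alternatively one argues directly that any putative overtwisted disc, made convex, would carry a contractible dividing curve, which flexibility would trace back to a contractible curve in $\Gamma_\Sigma$, contradicting the hypothesis.

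I expect the tight direction to be the main obstacle. Constructing the overtwisted disc from bad dividing data is a local, constructive use of flexibility, whereas proving tightness is a global non-existence statement that requires genuinely analytic input---the perturbation theory of taut foliations---or a careful normal-form and state-traversal argument to preclude overtwisted discs; verifying tautness of the auxiliary foliation and identifying its contact perturbation with $\xi$ are where the real difficulty concentrates.
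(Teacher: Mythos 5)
This statement is quoted in the paper as background and attributed to Giroux (cf.\ \cite{giroux:convex, giroux:criterion}); the paper gives no proof of it, so your proposal can only be measured against the standard arguments in the literature, and measured that way it has one genuine gap and one non-argument. The gap is in your overtwisted direction, precisely at the point you dismiss as ``routine bookkeeping.'' Enlarging the realized family by auxiliary curves can never repair an isolating configuration: adding components to the collection $C$ only subdivides the complementary regions of $\Sigma\setminus(\Gamma_\Sigma\cup C)$ and cannot make a region that misses $\Gamma_\Sigma$ meet it. The genuinely problematic case is $\Gamma_\Sigma$ equal to a \emph{single} contractible curve $\gamma$ on $\Sigma\neq S^2$: any curve $\delta$ bounding a disc containing $\gamma$ then isolates the entire complement, and the failure is not an artifact of the method --- a divergence (flux) argument on $\Sigma_\pm$ shows the characteristic foliation of $\Sigma$ admits \emph{no} closed leaf bounding a disc in this situation, so no isotopy of $\Sigma$ rel $\Gamma_\Sigma$ produces your $tb=0$ unknot on $\Sigma$ itself. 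The standard repair is a different operation: first \emph{fold} $\Sigma$ (use flexibility to realize a Legendrian divide along a homotopically essential curve in $\Sigma_+$, then isotop $\Sigma$ across it inside its invariant neighborhood) so as to add a pair of essential dividing curves; only after $\Gamma_\Sigma$ has been so enlarged is $\delta$ nonisolating and the Legendrian realization principle of \cite{honda:one} applicable. Your case analysis for $S^2$ and for $|\Gamma_\Sigma|\geq 2$ is fine; the single-curve case needs this extra idea, which the proposal does not contain.

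In the tight direction, your ``alternative'' argument is circular as stated: every dividing curve on a disc is contractible in the disc, and no mechanism is given by which flexibility ``traces back'' dividing curves of a putative overtwisted disc, which need not be anywhere near $\Sigma$ or isotopic to a subsurface of it, to components of $\Gamma_\Sigma$. Your main route (build a taut foliation from the $\Sigma_\pm$ decomposition, perturb by Eliashberg--Thurston, match with $\xi$ via flexibility and Gray stability) is a legitimate known strategy, and you are right that this is where the difficulty concentrates, but the substantive steps are exactly the ones left unverified: that the invariant structure is a deformation of a Reebless foliation on the compact model $\Sigma\times I$ (boundary and noncompactness issues are real), and that fillability-implies-tightness is an additional input (Gromov--Eliashberg), not part of the perturbation theorem. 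The more economical textbook route, which you might prefer, avoids foliations entirely: pass to the universal cover, where the $\mathbb{R}$-invariant structure lives on $\widetilde{\Sigma}\times\mathbb{R}\cong\mathbb{R}^2\times\mathbb{R}$ and $\Gamma$ lifts to properly embedded lines precisely because no component of $\Gamma_\Sigma$ is contractible, and embed this model into the standard tight $\mathbb{R}^3$; this yields universal tightness, with the convex $S^2$ with connected dividing set handled separately by embedding its invariant neighborhood in the standard $S^3$.
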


\section{Generalized Bennequin inequalities}

Let $\Sigma$ be an embedded surface in a contact 3-manifold $(M,\xi)$ with $\partial \Sigma\neq\emptyset$ having multiple components. Let $\mathfrak{F}$ be the singular characteristic foliation on $\Sigma$. Isotop $\Sigma$ ($C^\infty$-small) away from $\partial\Sigma$ so that the singularities of $\mathfrak{F}$ are isolated elliptic and hyperbolic (see \cite{etnyre:intro, giroux:convex}). Let $e_\pm$ be the number of positive/negative elliptic singularities and $h_\pm$ be the number of positive/negative hyperbolic singularities. The Poincar\'e-Hopf theorem says that $\chi(\Sigma)=(e_++e_-)-(h_++h_-)$.

For a transverse knot $K$ with Seifert surface $\Sigma$ in a contact 3-manifold $(M,\xi)$, consider a non-zero section in the trivialization $\xi\rvert_\Sigma$ and take a push-off $K'$ of $K$ along this section. The \textit{self-linking number of $K$} is defined by $sl(K):=K'\cdot\Sigma$. Let $e(\xi)([\Sigma])$ denote the Euler class of $\xi\rvert_\Sigma$. Let $\mathfrak{F}$ be the characteristic foliation which flows transversely out of $K=\partial \Sigma$. Consider the graph $G=\{(x,p)\in\xi\rvert_\Sigma\ \rvert\ p=v(x), x\in\Sigma, p\in \xi_x\}$ of $v$ which directs $\mathfrak{F}$. $G$ is a surface in the 4-manifold $\xi\rvert_\Sigma$, and the zero section is another surface given by $\{(x,0)\in \xi\rvert_\Sigma\ \rvert\ x\in\Sigma\}$, so the Euler class of $\xi\rvert_\Sigma$ is the oriented intersection number of these two surfaces. Counting singularities with signs, we have $-sl(K)=e(\xi)([\Sigma])=(e_+-h_+)-(e_--h_-)$. The Poincar\'e-Hopf theorem yields Eliashberg's equation $\chi(\Sigma)+e(\xi)([\Sigma])=2(e_+-h_+)$. Convex surface theory gives us that $e_+=0$ (see \cite{yasha:legknots, etnyre:intro}) and we obtain the classical Bennequin inequality $sl(K)\leq-\chi(\Sigma)$ (\cite{bennequin}). 

This approach generalizes directly for $\Sigma$ with transverse $\partial \Sigma=K_1\cup\cdots\cup K_m$, we have $-(sl(K_1)+\cdots+sl(K_m))=e(\xi)([\Sigma])=(e_+-h_+)-(e_--h_-)$.

\begin{lemma}\label{generalb}(Generalized Bennequin inequality) 
Given $\Sigma$ with transverse $\partial \Sigma=K_1\cup\cdots\cup K_m$ in a tight contact 3-manifold, $sl(K_1)+\cdots+sl(K_m)\leq -\chi(\Sigma)$.
\end{lemma}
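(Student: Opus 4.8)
The plan is to run the single-boundary argument recalled just above verbatim, the only new point being that every quantity involved is additive over the singularities of $\mathfrak{F}$ and hence insensitive to the number of boundary components. Both required bookkeeping identities are already in hand: the Poincar\'e--Hopf theorem gives $\chi(\Sigma)=(e_++e_-)-(h_++h_-)$, and the Euler-class computation of the preceding paragraph gives $-(sl(K_1)+\cdots+sl(K_m))=e(\xi)([\Sigma])=(e_+-h_+)-(e_--h_-)$. First I would fix a representative of $\Sigma$ whose characteristic foliation $\mathfrak{F}$ is Morse--Smale, with isolated elliptic and hyperbolic singularities, as permitted by a $C^\infty$-small perturbation carried out away from the transverse boundary.

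Next I would add the two identities to recover Eliashberg's equation $\chi(\Sigma)+e(\xi)([\Sigma])=2(e_+-h_+)$, which now reads $\chi(\Sigma)-(sl(K_1)+\cdots+sl(K_m))=2(e_+-h_+)$. The inequality then reduces to a sign-definiteness statement about the remaining elliptic term. Exactly as in the single-boundary case, I would invoke the Giroux elimination lemma together with tightness of the ambient contact structure to cancel the offending elliptic singularities against hyperbolic singularities of the same sign, so that after normalization the relevant count vanishes ($e_+=0$, as in the single-boundary derivation). Feeding this back into Eliashberg's equation makes the right-hand side sign-definite and yields $sl(K_1)+\cdots+sl(K_m)\leq-\chi(\Sigma)$.

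The main obstacle is precisely this elimination step, which is where tightness is genuinely used and which must be checked to go through uniformly in the multi-component setting. Concretely, I would need each elliptic singularity slated for cancellation to be joined by a leaf of $\mathfrak{F}$ to a hyperbolic singularity of the same sign, and I would need the absence of the closed leaves that could obstruct such cancellations; both are guaranteed by tightness, through the exclusion of overtwisted discs and limit cycles. Since the elimination lemma and the accompanying genericity are local to a neighborhood of a connecting leaf, they are indifferent to how $\partial\Sigma$ is partitioned into the components $K_1,\ldots,K_m$, so the single-boundary normalization applies and the counts $e_\pm,h_\pm$ remain additive throughout. I expect no difficulty beyond confirming that the perturbation placing $\mathfrak{F}$ in Morse--Smale form can be performed simultaneously near all of $K_1,\ldots,K_m$ while keeping each component transverse.
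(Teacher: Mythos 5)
Your proposal is essentially identical to the paper's own proof: the paper establishes Lemma~\ref{generalb} precisely by the singularity count you describe, namely the Poincar\'e--Hopf identity $\chi(\Sigma)=(e_++e_-)-(h_++h_-)$, the multi-boundary Euler-class computation $-(sl(K_1)+\cdots+sl(K_m))=e(\xi)([\Sigma])=(e_+-h_+)-(e_--h_-)$, and the tightness-based elimination of elliptic singularities, which (as you correctly observe) is local near connecting leaves and hence insensitive to how $\partial\Sigma$ splits into components. The only caveat is one you inherited from the paper's text rather than introduced yourself: with the stated sign convention for $e(\xi)([\Sigma])$, the count that must be normalized to zero is $e_-$ rather than $e_+$, since then $\chi(\Sigma)+sl(K_1)+\cdots+sl(K_m)=2(e_--h_-)\leq 0$ gives the claimed inequality, whereas killing $e_+$ would yield the reverse bound.
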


Following Eliashberg \cite{yasha:legknots} and Etnyre \cite{etnyre:intro, etnyre:knots}, consider a Legendrian knot $K$ with Seifert surface $\Sigma$ and an annulus $A=S^1\times [-1,1]$ in a standard neighborhood around $K$ such that $A$ is transverse to $\xi$ and $K$ is the only closed leaf on the characteristic foliation of $A$. Then take the union of $\Sigma$ with the appropriate part of $A$ to form a Seifert surface $\Sigma_\pm$ for the knot $\gamma_\pm=S^1\times\{\pm1\}$. If the neighborhood is chosen so that $\partial A$, the $\Sigma_\pm$ are isotopic to $\Sigma$, and the Euler characteristic of $\chi(\Sigma_\pm)=\chi(\Sigma)$ because the part of $A$ in each Seifert surface does not contribute to $\chi(\Sigma_\pm)$. Then  $sl(K_{1\pm})+\cdots+sl(K_{m\pm})=\Big{(}tw_{K_1}(\xi,Fr_\Sigma)\mp r(K_1)\Big{)}+\cdots+\Big{(}tw_{K_m}(\xi,Fr_\Sigma)\mp r(K_m)\Big{)}$. 

\begin{lemma}\label{generaltb}(Generalized Thurston-Bennequin inequality) 
Given $\Sigma$ with transverse $\partial \Sigma=K_1\cup\cdots\cup K_m$ in a tight contact 3-manifold, we have $$tw_{K_1}(\xi,Fr_\Sigma)+\cdots+ tw_{K_m}(\xi,Fr_\Sigma)+\rvert r(K_1)+\cdots+r_(K_m)\rvert \leq-\chi(\Sigma).$$
\end{lemma}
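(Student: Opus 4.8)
The plan is to reduce this Legendrian estimate to the transverse inequality already established in Lemma \ref{generalb}, by way of the annulus construction set up immediately above. Since $tw$ and the rotation number $r$ are Legendrian data, I treat $\partial\Sigma=K_1\cup\cdots\cup K_m$ as Legendrian. First I would pass to the two transverse push-offs: pushing each component $K_i$ off along the boundary leaves of the characteristic foliation of the transverse annulus $A$ in its standard neighborhood produces transverse knots $K_{i\pm}=S^1\times\{\pm1\}$, and gluing the appropriate halves of the annuli to $\Sigma$ yields Seifert surfaces $\Sigma_\pm$ for the transverse link $K_{1\pm}\cup\cdots\cup K_{m\pm}$. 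Because the annular pieces contribute nothing to the Euler characteristic, I record that $\chi(\Sigma_\pm)=\chi(\Sigma)$.

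Next I would invoke the self-linking identity displayed just before the lemma, namely
$$sl(K_{1\pm})+\cdots+sl(K_{m\pm})=\sum_{i=1}^m\Big(tw_{K_i}(\xi,Fr_\Sigma)\mp r(K_i)\Big),$$
which rewrites the self-linking numbers of the transverse push-offs in terms of the Legendrian twisting and rotation numbers of the original boundary. Applying Lemma \ref{generalb} to the transverse link $K_{1\pm}\cup\cdots\cup K_{m\pm}$ bounding $\Sigma_\pm$ in the same tight contact manifold gives $sl(K_{1\pm})+\cdots+sl(K_{m\pm})\le-\chi(\Sigma_\pm)=-\chi(\Sigma)$. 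Substituting the identity for each choice of sign yields the two inequalities
$$\sum_{i=1}^m tw_{K_i}(\xi,Fr_\Sigma)-\sum_{i=1}^m r(K_i)\le-\chi(\Sigma),\qquad\sum_{i=1}^m tw_{K_i}(\xi,Fr_\Sigma)+\sum_{i=1}^m r(K_i)\le-\chi(\Sigma),$$
coming respectively from the positive and negative push-offs.

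Finally, whichever sign the total rotation number $\sum_i r(K_i)$ happens to carry, one of these two inequalities contributes the term $+\big|\sum_i r(K_i)\big|$ on the left, so combining them gives
$$\sum_{i=1}^m tw_{K_i}(\xi,Fr_\Sigma)+\Big|\sum_{i=1}^m r(K_i)\Big|\le-\chi(\Sigma),$$
which is exactly the asserted estimate. I expect the only genuinely delicate point to be the orientation bookkeeping behind the self-linking identity — making the $\mp r(K_i)$ signs consistent with the chosen orientation of $\Sigma$ and the two push-off directions — but this is precisely the content of the annulus computation recorded before the statement, and once it is in hand the result follows formally from the generalized transverse Bennequin inequality.
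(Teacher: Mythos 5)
Your proposal is correct and is essentially the paper's own argument: the paper proves the lemma implicitly in the paragraph preceding it, via the same transverse push-offs $K_{i\pm}$ along annuli in standard neighborhoods, the same identity $sl(K_{1\pm})+\cdots+sl(K_{m\pm})=\sum_i\big(tw_{K_i}(\xi,Fr_\Sigma)\mp r(K_i)\big)$ with $\chi(\Sigma_\pm)=\chi(\Sigma)$, and an application of Lemma~\ref{generalb} to $\Sigma_\pm$. You merely make explicit the final step (choosing the sign of the push-off according to the sign of $\sum_i r(K_i)$ to produce the absolute value) and correctly flag that the boundary should be read as Legendrian despite the statement's ``transverse'' typo.
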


This observation has several important consequences.

\begin{lemma} 
Let $K$ and $J$ be homologous Legendrian knots in a tight contact 3-manifold $(M,\xi)$, then $\widetilde{tb}_\Sigma(K,J)$ is bounded above.
\end{lemma}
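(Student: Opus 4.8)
The plan is to derive the bound directly from the generalized Thurston--Bennequin inequality of Lemma~\ref{generaltb}, applied to $\Sigma$ itself. Because $[\partial\Sigma]=[K]-[J]$, the oriented boundary of $\Sigma$ is the two-component Legendrian link $K\cup\bar J$, where $\bar J$ denotes $J$ with reversed orientation. First I would record two elementary facts that neutralize this reversal: the twisting $tw(\xi,Fr_\Sigma)$ is independent of the orientation of the curve, since reversing orientation reverses both the contact framing and the Seifert framing, so that $tw_{\bar J}(\xi,Fr_\Sigma)=tw_J(\xi,Fr_\Sigma)$; whereas the rotation number is odd under reversal, $r(\bar J)=-r(J)$.

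Applying Lemma~\ref{generaltb} to the boundary $K\cup\bar J$ then gives
\begin{equation*}
tw_K(\xi,Fr_\Sigma)+tw_J(\xi,Fr_\Sigma)+\bigl|r(K)-r(J)\bigr|\le-\chi(\Sigma),
\end{equation*}
and discarding the nonnegative rotation term yields $tw_K(\xi,Fr_\Sigma)+tw_J(\xi,Fr_\Sigma)\le-\chi(\Sigma)$. This controls the \emph{sum} of the two twistings, while the relative invariant $\widetilde{tb}_\Sigma(K,J)=tw_K(\xi,Fr_\Sigma)-tw_J(\xi,Fr_\Sigma)$ is their \emph{difference}; the mechanism for bridging the two is to regard $J$ as a fixed reference knot, exactly as in the introduction. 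Holding $J$ fixed, I would write
\begin{equation*}
\widetilde{tb}_\Sigma(K,J)=\bigl(tw_K(\xi,Fr_\Sigma)+tw_J(\xi,Fr_\Sigma)\bigr)-2\,tw_J(\xi,Fr_\Sigma)\le-\chi(\Sigma)-2\,tw_J(\xi,Fr_\Sigma),
\end{equation*}
turning the bound on the sum into a bound on the difference, at the cost of the fixed constant $2\,tw_J(\xi,Fr_\Sigma)$.

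The genuinely delicate step, which I expect to be the main obstacle, is that both $-\chi(\Sigma)$ and $Fr_\Sigma$ still depend on the surface, so I must show the right-hand side may be taken constant as $K$ ranges over Legendrian representatives of a fixed smooth type. For this I would use the push-off description $\widetilde{tb}_\Sigma(K,J)=(K'-J')\cdot\Sigma$ coming from the definition of $\widetilde{tb}_\Sigma(K,J)$: since $[K']=[K]$, $[J']=[J]$, and $K,J$ are homologous, the $1$-cycle $K'-J'$ is null-homologous in $M$, so its signed intersection with $\Sigma$ is unchanged under modifications of $\Sigma$ rel boundary within a fixed homology class. Hence $\widetilde{tb}_\Sigma(K,J)$ depends only on the class of $\Sigma$ in $H_2(M;K\cup J)$. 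Fixing that class and representing it by a surface of minimal genus makes $-\chi(\Sigma)$ and $tw_J(\xi,Fr_\Sigma)$ into constants determined by $J$ and the class alone, which yields the desired uniform upper bound. The content here is essentially homological bookkeeping---reconciling the sum-versus-difference mismatch and pinning down the surface dependence---with all of the contact-geometric input already absorbed into Lemma~\ref{generaltb}.
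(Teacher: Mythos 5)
Your proof is correct and takes essentially the same route as the paper: apply Lemma~\ref{generaltb} to $\Sigma$, discard the rotation term, and rewrite the resulting bound on the sum of twistings as $\widetilde{tb}_\Sigma(K,J)\leq-\chi(\Sigma)-2\,tw_J(\xi,Fr_\Sigma)$, which the paper then declares fixed because $J$ is fixed. Your final paragraph on the dependence of the bound on the choice of $\Sigma$ (invariance under change of surface within a relative homology class, then taking a minimal-genus representative) goes beyond the paper's one-line assertion and harmlessly patches a point the paper glosses over; the sign discrepancy $\lvert r(K)-r(J)\rvert$ versus the paper's $\lvert r(K)+r(J)\rvert$ is only an orientation convention and is immaterial since the term is discarded.
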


\begin{proof} 
By Lemma \ref{generaltb}, $tw_K(\xi,Fr_\Sigma)+ tw_J(\xi,Fr_\Sigma) +\rvert r(K)+ r(J)\rvert \leq-\chi(\Sigma)$ yields $\widetilde{tb}_\Sigma(K,J)+\rvert r_\Sigma(K)+ r_\Sigma(J)\rvert \leq-\chi(\Sigma)-2tw_J(\xi,Fr_\Sigma)$ or $\widetilde{tb}_\Sigma(K,J)\leq-\chi(\Sigma)-2tw_J(\xi,Fr_\Sigma)$. The quantity $-\chi(\Sigma)-2tw_J(\xi,Fr_\Sigma)$ is fixed because $J$ is fixed. 
\end{proof}

This argument generalizes directly for a knot homologous to multiple knots.

\begin{lemma}\label{reltb} 
If $K,J_1,\dots,J_m$ are  Legendrian with $K\cup J_1\cup J_2\cup \cdots \cup J_m=\partial\Sigma$ in a tight contact 3-manifold $(M,\xi)$, then $\widetilde{tb}_\Sigma(K,J_1\cup \cdots\cup J_m)$ is bounded above.
\end{lemma}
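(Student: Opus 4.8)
The plan is to follow the proof of the two-component case verbatim, applying Lemma \ref{generaltb} to $\Sigma$ viewed with its full Legendrian boundary $\partial\Sigma = K \cup J_1 \cup \cdots \cup J_m$. Extending the relative Thurston-Bennequin invariant to this multi-reference setting in the obvious way, set
$$\widetilde{tb}_\Sigma(K, J_1\cup\cdots\cup J_m) = tw_K(\xi,Fr_\Sigma) - \sum_{i=1}^m tw_{J_i}(\xi,Fr_\Sigma),$$
which is consistent with the homological relation $[\partial\Sigma] = [K] - \sum_i [J_i]$.

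First I would apply Lemma \ref{generaltb} to the $m+1$ boundary components $K, J_1, \dots, J_m$, which gives
$$tw_K(\xi,Fr_\Sigma) + \sum_{i=1}^m tw_{J_i}(\xi,Fr_\Sigma) + \Big| r(K) + \sum_{i=1}^m r(J_i) \Big| \leq -\chi(\Sigma).$$
Subtracting $2\sum_i tw_{J_i}(\xi,Fr_\Sigma)$ from both sides isolates the relative invariant on the left, and discarding the nonnegative rotation-number term then yields
$$\widetilde{tb}_\Sigma(K, J_1\cup\cdots\cup J_m) \leq -\chi(\Sigma) - 2\sum_{i=1}^m tw_{J_i}(\xi,Fr_\Sigma).$$
Because the reference knots $J_1,\dots,J_m$ and the surface $\Sigma$ are held fixed, both $\chi(\Sigma)$ and $\sum_i tw_{J_i}(\xi,Fr_\Sigma)$ are fixed constants, so the right-hand side is a fixed upper bound.

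I expect no real obstacle, since this is a direct generalization of the preceding two-component lemma and tightness enters only through the hypotheses of Lemma \ref{generaltb}. The single point meriting a word of care is the orientation bookkeeping: each $J_i$ enters $\partial\Sigma$ with orientation opposite to its intrinsic one, but $tw_{J_i}(\xi,Fr_\Sigma) = lk(J_i,J_i')$ is computed from the contact push-off $J_i'$ and is unchanged under reversing the orientation of $J_i$ (one reverses both $J_i$ and $J_i'$). Only the rotation-number term is orientation-sensitive, and since it is discarded the bound is unaffected.
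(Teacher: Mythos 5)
Your proposal is correct and takes essentially the same route as the paper, which offers no separate proof of Lemma \ref{reltb} but states that the two-component argument ``generalizes directly'': apply Lemma \ref{generaltb} to $\partial\Sigma=K\cup J_1\cup\cdots\cup J_m$, rearrange, and discard the rotation term to get $\widetilde{tb}_\Sigma(K,J_1\cup\cdots\cup J_m)\leq-\chi(\Sigma)-2\sum_{i=1}^m tw_{J_i}(\xi,Fr_\Sigma)$, with the right-hand side fixed since the $J_i$ (and $\Sigma$) are fixed. Your closing remark on orientation bookkeeping is a sound extra check not spelled out in the paper.
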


\begin{remark} The above bound depends on the $J_i$ while $\widetilde{tb}_n(K,J)$ in Theorem \ref{tbnbound} is bounded by $Tb(\varphi(K))-n$ for $\varphi(K)\subset(S^3,\xi_{std})$ and even though $J$ is also fixed, instead of using the Seifert surface for $K\cup J$ directly, we find a Seifert surface $\Sigma'$ for $\varphi(K)$ and use the bound on $tb_{\Sigma'}(\varphi(K))$. We want to compare the two approaches. Since $\Sigma'=\Sigma\cup D$, $\chi(\Sigma')=\chi(\Sigma)+\chi(D)$. The two bounds are $\widetilde{tb}_\Sigma(K,J)\leq Tb_{\Sigma'}(\varphi(K))-tw_J(\xi_n,Fr_\Sigma)$ and $\widetilde{tb}_\Sigma(K,J)\leq -\chi(\Sigma)-2tw_J(\xi_n,Fr_\Sigma)$. Since $Tb_{\Sigma'}(\varphi(K))\leq -\chi(\Sigma')$, we have $\widetilde{tb}_\Sigma(K,J)\leq -\chi(\Sigma')-tw_J(\xi_n,Fr_\Sigma)$. Also $tb_D(\varphi(K))+\rvert r_D(\varphi(K))\rvert\leq-\chi(D)$ and $tb_D(K_0)=tw_J(\xi_n,Fr_\Sigma)$, which implies that $tw_J(\xi_n,Fr_\Sigma)\leq -\chi(D)$, which yields $\widetilde{tb}_\Sigma(K,J)\leq-\chi(\Sigma')-tw_J(\xi_n,Fr_\Sigma)$. Both bounds are smaller than $-\chi(S)-tw_J(\xi_n,Fr_\Sigma)$, but we do not have a direct way of comparing them by just using classical methods. This relates to the problem of the exactness of the Thurston-Bennequin inequality.
\end{remark}

\section{Additivity of the relative invariants}

We study the additivity of the relative invariants under versions of connected sum. The results build up on the work of Etnyre-Honda \cite{etnyrehonda:knots2}. Recall the following.

\begin{theorem}\label{colin}(Colin \cite{colin}) 
Denote by $Tight(M)$ the space of tight contact 2-plane fields on a 3-manifold M. Then given contact 3-manifolds $M_1, M_2$, there is an isomorphism $\pi_0(Tight(M_1)\times \pi_0(Tight(M_2))\xrightarrow{\cong}\pi_0(Tight(M_1\#M_2)).$
\end{theorem}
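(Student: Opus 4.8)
The plan is to realize the map explicitly as a connected-sum operation and then invert it using a convex separating sphere. Fix tight contact structures $\xi_1$ on $M_1$ and $\xi_2$ on $M_2$. By Darboux's theorem each $\xi_i$ is standard on some embedded ball $B_i\subset M_i$; deleting the interiors and gluing $\partial B_1$ to $\partial B_2$ by an orientation-reversing contactomorphism produces a contact structure $\xi_1\#\xi_2$ on $M_1\#M_2$. First I would check this is well defined on $\pi_0$: any two Darboux balls in a connected $M_i$ are carried to one another by a contact isotopy (Darboux together with the contact isotopy extension theorem), and the germ of a tight contact structure on $S^2=\partial B$ is unique up to isotopy, so the gluing is independent of all choices up to contact isotopy of the result. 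This defines $\Phi:\pi_0(Tight(M_1))\times\pi_0(Tight(M_2))\to\pi_0(Tight(M_1\#M_2))$, \emph{provided} the image is tight.

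To construct the inverse I would start from a tight $\xi$ on $M_1\#M_2$ together with the topological sphere $S$ realizing the connected sum. Being closed, $S$ is $C^\infty$-close to a convex sphere, and by Giroux's Criterion the tightness of $\xi$ forces the dividing set $\Gamma_S$ to be a single circle. A convex sphere with connected dividing set has a standard product neighborhood $S\times[-1,1]$ whose germ depends only on $\Gamma_S$ (Giroux \cite{giroux:convex}); cutting $M_1\#M_2$ along $S$ and capping each side with a ball then yields a boundary sphere carrying the \emph{standard} characteristic foliation. By Eliashberg's classification of tight contact structures on $B^3$ there is a unique tight filling of this germ, so the two capped-off pieces inherit well-defined tight structures $\xi_1,\xi_2$ on $M_1,M_2$. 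Sending $\xi\mapsto([\xi_1],[\xi_2])$ gives the candidate inverse, whence surjectivity of $\Phi$ is immediate. Well-definedness of this inverse (independence of the convex sphere chosen, hence injectivity of $\Phi$) rests on the uniqueness up to contact isotopy of the separating sphere, which is Colin's isotopy-of-spheres result \cite{colin}; granting it, $\Phi$ is a bijection.

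The genuinely hard part is showing that $\Phi$ lands in $Tight$, i.e.\ that the connected sum of two tight structures is tight; everything above presumes this. Here I would argue by contradiction: suppose $\xi_1\#\xi_2$ contains an overtwisted disc $D$. Arrange $S$ convex with connected $\Gamma_S$ as above, so by Giroux's Criterion $S$ has a \emph{tight} product neighborhood $N=S\times[-1,1]$. After a small perturbation $D$ meets $S$ transversely in finitely many circles, and I would remove these by an innermost-circle argument inside $N$, using Legendrian realization to control the characteristic foliation and the tightness of $N$ to rule out that any innermost disc of intersection is itself overtwisted. Once $D\cap S=\emptyset$, the disc $D$ lies entirely in one summand piece $M_i\setminus B_i$, which is tight by hypothesis, a contradiction. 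I expect this disjunction step to be the main obstacle: arranging the innermost-disc surgeries to be honest contact isotopies that do not create new overtwisting is exactly the delicate point, and is where Colin's techniques \cite{colin} and convex surface theory \cite{giroux:convex} carry the weight of the argument.
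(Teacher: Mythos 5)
The paper itself offers no proof of this statement: it is imported as Colin's theorem \cite{colin} and used as a black box in everything that follows (the contact connected sum of Remark \ref{contactsum}, the splitting lemmas, the structure theorem). So your proposal can only be measured against the argument in the literature, and at the level of architecture you have reconstructed it correctly: define the forward map by excising Darboux balls and gluing along a sphere with standard germ, check independence of choices via uniqueness of the tight germ on $S^2$; for the inverse, convexify the separating sphere, invoke Giroux's Criterion to get a connected dividing set, cut and cap with the unique tight ball of Eliashberg. That is the right skeleton, and you correctly identify tightness of $\xi_1\#\xi_2$ as the hard analytic core.

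The genuine problem is that your sketch is circular at exactly the two load-bearing points, both of which you discharge by citing \cite{colin}---the theorem under proof. First, injectivity (equivalently, well-definedness of the inverse) requires that any two convex separating spheres for the connected sum are contact isotopic; that is precisely Colin's ``isotopies de sph\`eres'' result and cannot be assumed as an input. Second, in the one step you do attempt---disjoining an overtwisted disc $D$ from the convex sphere $S$---the innermost-circle surgery as described is a smooth-topology move, not a contact one: replacing an innermost subdisc of $D$ by a parallel copy of a disc in $S$ produces a new disc that has no reason to be overtwisted (its boundary need not be Legendrian with zero twisting relative to the disc framing), and Legendrian realization applies to curves on a fixed convex surface, not to the intersection circles viewed on $D$. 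Making such surgeries preserve an overtwisted disc is the actual content of Colin's paper, and no shortcut through ``tightness of the product neighborhood $N$'' supplies it. A further unacknowledged gap of the same kind: when you cap the cut-open pieces to define $\xi_i$ on $M_i$, the restriction of $\xi$ to $M_i\setminus B_i$ is tight, but tightness of the \emph{capped} manifold is again a gluing statement---the very disjunction problem---so ``the capped-off pieces inherit well-defined tight structures'' is asserted, not proved. In short: accurate as a roadmap of the literature, but not a proof, since every step beyond convex-surface generalities is an appeal to the result itself.
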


\begin{remark} \label{contactsum}(Contact connected sum \cite{etnyrehonda:knots2, torisu}) 
Let $(M_i,\xi_i),\ i=1,2,$ be tight contact 3-manifolds. Choose points $p_i\in M_i$ and a \textit{standard contact 3-ball $B_i$} around each $p_i$ (by Darboux's theorem, $(B_i,\xi_i\rvert_{B_i})$ is contactomorphic to a 3-ball around the origin in $(S^3,\xi_{std})$). Note $\partial B_i$ is $C^\infty$-close to a convex 2-sphere with a single dividing curve (Giroux's Criterion, \cite{giroux:criterion}), and Giroux's Flexibility Theorem \cite{giroux:convex} allows us to arrange that $\partial B_i$ have diffeomorphic foliations so the $B_i$ are contactomorphic (\cite{yasha:20}), and there is an orientation-reversing diffeomorphism $f:\partial(M_1\setminus B_1)\rightarrow \partial(M_2\setminus B_2)$ that maps the characteristic foliation on $\partial(M_1\setminus B_1)$ to the characteristic foliation on $\partial(M_2\setminus B_2)$. Then the \textit{contact connected sum} $(M_1,\xi_1)\#(M_2,\xi_2)=\big{(}(M_1\setminus B_1)\cup_f (M_2\setminus B_2),\xi_1\#_f\xi_2\big{)}$ yields a tight contact 3-manifold and is independent of the choice of $B_i,p_i$, and $f$. Moreover, every tight contact structure on $M$ arises as the contact connected sum of a unique pair $(\xi_1,\xi_2)$.
\end{remark}

\begin{remark}\label{legsum} (Legendrian connected sum \cite{etnyrehonda:knots2}) 
The Legendrian connected sum is a relative version of the contact connected sum. In $(S^3,\xi_{std})$, it can easily be described using the front projection of two Legendrian knots $K_1$ and $K_2$ as joining a right cusp of $K_1$ and a left cusp of $K_2$ (well-defined by the uniqueness of the front projection). By Theorem \ref{colin}, the contact structure on $S^3=(S^3,\xi_{std})\#(S^3,\xi_{std})$ is tight so it is isotopic to $\xi_{std}$. In the general construction (\cite{etnyrehonda:knots2}), pick points $p_i\in K_i\subset M_i$ and neighborhoods $B_i$ of the $p_i$. Then use an orientation-reversing diffeomorphism $f:\partial B_1\rightarrow \partial B_2$ to construct the contact connected sum $M_1\#M_2=(M_1\setminus B_1)\cup_f(M_2\setminus B_2)$. This diffeomorphism (Remark \ref{contactsum}) performs exactly what we observed in the front projection, with the cusps at the points $p_i$.
\end{remark}

\begin{lemma}\label{sum} 
In the connected sum of $K_1$, $K_2$, $tb(K_1\#K_2)=tb(K_1)+tb(K_2)+1$.
\end{lemma}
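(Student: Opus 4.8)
The plan is to compute $tb$ directly from the front projection, where the Legendrian connected sum of Remark~\ref{legsum} has an explicit local model. Recall that for a Legendrian knot $K$ in $(S^3,\xi_{std})$ with front projection $\pi(K)$, the Thurston--Bennequin invariant is given by $tb(K)=w(\pi(K))-c(K)$, where $w$ denotes the writhe (the signed count of crossings) and $c$ denotes the number of right cusps, which equals the number of left cusps. Since the general connected sum is built inside Darboux balls in which the picture is exactly this standard cusp-joining (Remark~\ref{legsum}), and since $tb$ is computed locally near $K$, it suffices to verify the formula for this local front model; the contributions away from the gluing region are unchanged.

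First I would track the writhe. Arranging the front so that $\pi(K_1)$ lies entirely to the left of $\pi(K_2)$ and the two knots are joined by a pair of embedded arcs that cross neither each other nor the rest of the diagram, no crossings are created or destroyed in the gluing. Hence the writhe is additive: $w(\pi(K_1\#K_2))=w(\pi(K_1))+w(\pi(K_2))$.

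Next I would count cusps. Joining a right cusp of $K_1$ to a left cusp of $K_2$ smooths both of these into the two connecting arcs, so exactly one right cusp of $K_1$ and one left cusp of $K_2$ disappear while all other cusps persist. Therefore the number of right cusps of $K_1\#K_2$ is $c(K_1)+c(K_2)-1$. Combining the two counts gives
$$tb(K_1\#K_2)=\big(w(\pi(K_1))+w(\pi(K_2))\big)-\big(c(K_1)+c(K_2)-1\big)=tb(K_1)+tb(K_2)+1,$$
as claimed.

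The main point to be careful about is the cusp bookkeeping: one must confirm that the joining removes exactly one cusp from each summand, giving the correction $+1$ rather than $+2$ or $0$, and that the connecting arcs can genuinely be drawn without introducing crossings so that the writhe really is additive. For a general tight contact 3-manifold, the remaining subtlety is that the Darboux-ball model of Remark~\ref{legsum} reproduces this cusp-joining faithfully and that $tb$, viewed as the twisting $tw_K(\xi,Fr_\Sigma)$ of the contact framing relative to a boundary-connected-sum Seifert surface, localizes to the gluing region; both follow from the locality of the framing comparison and constitute the step I expect to require the most care.
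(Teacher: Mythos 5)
Your proof is correct, but it takes a genuinely different route from the paper's. You verify the formula combinatorially in the front projection of $(S^3,\xi_{std})$ via $tb=w-c$: writhe additivity plus the observation that joining a right cusp of $K_1$ to a left cusp of $K_2$ removes exactly one right cusp (and one left cusp) from the combined diagram, giving the correction $+1$; this bookkeeping is right. The paper instead records an argument of Etnyre that works directly with the Seifert surfaces: isotop each $\Sigma_i$ so that a chosen arc $a_i\subset K_i$ carries a single positive elliptic singularity of the characteristic foliation, use a contactomorphism carrying a neighborhood of $D_i\cup c_i\cup D_i'$ onto the standard cusp model in $(\R^3,\xi_0)$, and observe that each such singularity contributes a left-handed half-twist to $tw_{K_i}(\xi_i,Fr_{\Sigma_i})$ and disappears under the sum while all other singularities persist, yielding a net $+1$ measured against the framing of the glued surface $(\Sigma_1\setminus D_1)\cup(\Sigma_2\setminus D_2)$. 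The two arguments meet in the same local cusp picture, but the step you flag at the end --- that ``$tb$ is computed locally near $K$'' --- is precisely where the paper's machinery does its real work: $tb$ itself is not local (the writhe is a global count), and in a general $(M_1,\xi_1)\#(M_2,\xi_2)$ there is no global front at all; what is local is the \emph{change} in $tw_K(\xi,Fr_\Sigma)$ under a modification supported in a ball, and justifying this requires pinning down the Seifert framing along the new connecting arcs, which is exactly what the elliptic-singularity normalization accomplishes. So your route buys a quick, elementary verification in the standard contact $S^3$, while the paper's route buys the statement in arbitrary tight manifolds together with explicit control of the Seifert surface through the operation --- control that the later relative constructions (Remark \ref{relsum}, Lemma \ref{reladd}) depend on.
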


\begin{proof} 
This was proved in \cite{etnyrehonda:knots2, torisu}, here we show an argument due to Etnyre (in a personal note) which keeps track of the Seifert surfaces. For Legendrian knots $K_1$ and $K_2$ with $K_i=\partial \Sigma_i,i=1,2$, pick a small arc $a_i$ on $K_i$ and isotop (the interior of) $\Sigma_i$ so that there is a positive elliptic singularity on $a_i$ and no other singularities in a small disc $D_i\subset \Sigma_i$ about $a_i$. Near $\Sigma_i$ but disjoint from it, pick a disc $D_i'$ with boundary $\partial D_i'=a'_i\cup b_i$ where the arc $a'_i$ has a negative elliptic point, the arc $b_i$ is transverse to $\xi_i$, and there are no other singularities in $D_i'$. Now take a Legendrian arc $c_i$ connecting the elliptic points on $a_i$ and $a_i'$. In $(\R^3,\xi_0)$, take a right cusp in the $xz$-plane centered on the $x$-axis lying to the left of the $z$-axis, a left cusp to the right of the $z$-axis, and a Legendrian arc on the $x$-axis connecting the cusps. There is a contactomorphism of a neighborhood of $D_i\cup c_i\cup D_i'$ to two discs in $(\R^3,\xi_0)$ having the cusps in the boundary and the arc on the $y$-axis. Now apply the Legendrian connected sum in the front projection to these cusps along the arc on the $x$-axis. In particular, one can see that the singularity $a_i$ in the characteristic foliation of $\Sigma_i$ before we perform the connected sum contributes a left-handed half-twist to the twisting of the contact planes along $K_i$ relative to the framing from $\Sigma_i$. After the connect sum operation, both these singularities are gone, but all other singularities remain. So there is a net ''+1" to the contact plane twisting along the knot relative to the Seifert framing (here, the Seifert surface is given by $(\Sigma_1\setminus D_1)\cup(\Sigma_2\setminus D_2)$).
\end{proof}

\begin{lemma}\label{split} (Splitting a Legendrian connected sum) 
Consider a Legendrian knot $K=\partial\Sigma$ of knot type $\kappa_1\#\kappa_2$ in a tight contact 3-manifold $(M,\xi)=(M_1,\xi_1)\#(M_2,\xi_2)$. Then $K$ can be split into knots $K_i'\subset(M_i,\xi_i)$ of knot type $\kappa_i\subset(M_i,\xi_i)$ such that $tb(K)=tb(K_1')+tb(K_2')+1$.
\end{lemma}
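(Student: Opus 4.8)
The plan is to reverse the bookkeeping of Lemma~\ref{sum}: rather than gluing two Legendrian knots along cusps, I would cut the given $K$ along the separating sphere of the connected sum and cap off the two resulting arcs, showing that this recovers exactly the half-twists responsible for the ``$+1$''.

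First I would fix the separating data. Since $(M,\xi)=(M_1,\xi_1)\#(M_2,\xi_2)$, the contact connected sum construction of Remark~\ref{contactsum} provides a convex sphere $S=\partial B$ separating $M$ into $(M_1\setminus B)$ and $(M_2\setminus B)$; by Giroux's criterion applied to $S^2$, the dividing set $\Gamma_S$ is a single circle. Because the smooth type of $K$ is $\kappa_1\#\kappa_2$, there is a smooth sphere meeting $K$ in exactly two points realizing the knot-connected-sum decomposition. The first task is to isotope $K$ (applying Gray stability to keep $\xi$ fixed) so that this decomposing sphere coincides with $S$ and $K\cap S$ is exactly two points. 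Tightness of the two pieces throughout is guaranteed by Theorem~\ref{colin} and the uniqueness statement of Remark~\ref{contactsum}, which simultaneously pins down the summands $(M_i,\xi_i)$ in which the split pieces will live.

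Next I would make the intersection contact-efficient. After a $C^\infty$-small perturbation I make $S$ convex and $K$ Legendrian, and I arrange the two points of $K\cap S$ so that a neighborhood of each matches the local model in the proof of Lemma~\ref{sum}: a right cusp on one side and a left cusp on the other, joined by a Legendrian arc crossing the dividing curve. Reducing $K\cap S$ to the minimal two points and positioning them relative to $\Gamma_S$ is carried out by convex surface theory, using the Legendrian realization principle together with bypass and imbalance arguments. I would then cut $K$ along $S$ into Legendrian arcs $k_1\subset M_1\setminus B$ and $k_2\subset M_2\setminus B$, and cap each $k_i$ off by a Legendrian arc through $S$ to form a closed Legendrian knot $K_i'\subset(M_i,\xi_i)$ of type $\kappa_i$; correspondingly $\Sigma$ splits as $(\Sigma_1\setminus D_1)\cup(\Sigma_2\setminus D_2)$ for Seifert surfaces $\Sigma_i$ of $K_i'$. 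To finish I would compute $tb$: each capping arc reintroduces precisely the positive elliptic singularity that was removed in Lemma~\ref{sum}, contributing a left-handed half-twist of $\xi$ relative to $Fr_{\Sigma_i}$ along $K_i'$, and summing the two half-twists gives
$$tw_K(\xi,Fr_\Sigma)=tw_{K_1'}(\xi_1,Fr_{\Sigma_1})+tw_{K_2'}(\xi_2,Fr_{\Sigma_2})+1,$$
which is the asserted identity $tb(K)=tb(K_1')+tb(K_2')+1$.

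The hard part will be the geometric step of arranging the minimal two-point intersection Legendrian-efficiently: the smooth knot-decomposing sphere and the contact manifold-separating sphere $S$ a priori differ, and one must isotope $K$ so that they agree with $|K\cap S|=2$ while keeping $S$ convex and every piece tight. This is where the bypass and convex-surface machinery is genuinely needed; once the local picture near the two intersection points matches the connect-sum model, the twisting computation is the verbatim reverse of Lemma~\ref{sum}.
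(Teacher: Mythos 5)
Your proposal is correct and takes essentially the same approach as the paper: split along a convex separating sphere $S$ meeting $K$ in two points (and $\Sigma$ in an arc), then cap the two Legendrian pieces with an arc on $S$ carrying exactly one left half-twist relative to $Fr_\Sigma$, which accounts for the $+1$. The step you flag as the hard part is resolved in the paper without bypasses: one Legendrian-realizes a closed curve on $S$ containing the arc $\alpha=S\cap\Sigma$, notes that $\alpha$ then meets the single dividing circle $\Gamma_S$ an odd number of times in its interior, and replaces it by a parallel arc $\alpha'$ (tangent at the endpoints) with exactly one $\Gamma_S$-intersection, which is the capping arc with the desired single left half-twist.
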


\begin{proof} 
We modify the Etnyre-Honda construction in \cite{etnyrehonda:knots2} to keep track of the Seifert surfaces. Let $K=\partial\Sigma$ be a Legendrian knot of knot type $\kappa_1\#\kappa_2$ in a tight contact 3-manifold $(M,\xi)=(M_1,\xi_1)\#(M_2,\xi_2)$. There exists a splitting 2-sphere $S$ for $\Sigma$ such that $S\cap \Sigma=\alpha$, an arc with $\partial\alpha=\{x_1,x_2\}\subset K$. Isotop $\Sigma$ so that $x_1$ is a negative singularity on $K$ and $x_2$ is a positive singularity on $K$ in the characteristic foliation of $\Sigma$ (isotop $S$ to make it convex). Note $\alpha$ intersects the dividing set $\Gamma_S$ in $\{x_1,x_2\}$. Take a closed curve $\gamma\subset S$ containing $\alpha$ and isotop $S$ to Legendrian realize $\gamma$  (see \cite{honda:one}). Then $\alpha$ is a Legendrian arc on $\Sigma$ which still intersects $K=\partial \Sigma$ in $\{x_1,x_2\}$. The interior of $\alpha$ contains an odd number of intersections with $\Gamma_S$ (intuitively, it contains an odd number of half-twists of the contact planes relative to $Fr_\Sigma$). Moreover, $\Gamma_S$ consists of a single closed curve, so the arc $\gamma\subset\Gamma_S$, $\partial\gamma=\{x_1,x_2\}$, which intersects $\alpha$ is ``parallel" to $\alpha$, that is, $\gamma$ and $\alpha$ co-bound a collection of (an even number of) 2-discs on $S$. Consider another arc $\alpha'\subset S$ which is parallel to $\alpha$, is tangent to $\alpha$ at $x_1$ and $x_2$, and which contains a single intersection with $\Gamma_S$ in its interior. Isotop $S$ to Legendrian realize $\alpha'$ and isotop the interior of $\Sigma$ so that $S\cap \Sigma=\alpha'$. Note that $\alpha'$ contains a single left twist of the contact planes with respect to $Fr_S$ and thus - relative to $Fr_\Sigma$. Use $\alpha'$ to complete each of the components of $K$ corresponding to the knot type of $K_1$ or $K_2$, respectively. Thus, we obtain two knots $K_i'\subset(M_i,\xi_i)$ of knot type $\kappa_i\subset(M_i,\xi_i)$ with $tb(K)=tb(K_1')+tb(K_2')+1$. If $\Sigma$ has any other boundary components, the equality $tb(K)=tb(K_1')+tb(K_2')+1$ holds in one of its relative versions (see below).
\end{proof}

First we look at a ``semi-relative'' case when one of the connected summands is homologous to another knot. Note $\widetilde{tb}_{\Sigma}(K_1\#K_2,J)$ is well-defined (see \cite{georgi}). 

\begin{proposition} 
Let $K_1, J\subset(M_1,\xi_1)$ be homologous Legendrian knots and $K_2\subset(M_2,\xi_2)$ be a null-homologous Legendrian knot. Assume the $\xi_i$ are tight, $\widetilde{tb}_{\Sigma_1}(K_1,J)=\widetilde{Tb}(K_1,J)$, and $tb(K_2)=Tb(K_2)$. Then $\widetilde{Tb}(K_1\#K_2,J)=\widetilde{Tb}(K_1,J)+Tb(K_2)+1$, where $J$ in $\widetilde{Tb}(K_1\#K_2,J)$ is the image of $J\subset M_1$ under the connected sum.
\end{proposition}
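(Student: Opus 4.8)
The plan is to prove the two inequalities $\widetilde{tb}$-max $\geq$ and $\widetilde{tb}$-max $\leq$ separately, using the connected-sum construction of Lemma \ref{sum} for the lower bound and the splitting construction of Lemma \ref{split} for the upper bound. Note first that $(M_1,\xi_1)\#(M_2,\xi_2)$ is tight by Theorem \ref{colin}, so by Lemma \ref{reltb} the relative Thurston-Bennequin invariant is bounded above and the quantity $\widetilde{Tb}(K_1\#K_2,J)$ is a genuine (finite) maximum over the relative knot type of $K_1\#K_2$ with reference $J$.

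For the lower bound I would start with the given maximizing representatives: $K_1$ and $J$ with $K_1\cup J=\partial\Sigma_1$ realizing $\widetilde{tb}_{\Sigma_1}(K_1,J)=\widetilde{Tb}(K_1,J)$, and $K_2=\partial\Sigma_2$ realizing $tb(K_2)=Tb(K_2)$. Perform the Legendrian connected sum of $K_1$ and $K_2$ at points $p_i\in K_i$ lying away from $J$, forming the Seifert surface $\Sigma=(\Sigma_1\setminus D_1)\cup(\Sigma_2\setminus D_2)$ for $(K_1\#K_2)\cup J$ exactly as in the proof of Lemma \ref{sum}. The local model there shows $tw_{K_1\#K_2}(\xi,Fr_\Sigma)=tw_{K_1}(\xi,Fr_{\Sigma_1})+tw_{K_2}(\xi,Fr_{\Sigma_2})+1$. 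Since the surgery is supported in $D_1\cup D_2$ and $J$ meets neither disc, $\Sigma$ agrees with $\Sigma_1$ near $J$, so $Fr_\Sigma$ and $Fr_{\Sigma_1}$ induce the same framing on $J$ and $tw_J(\xi,Fr_\Sigma)=tw_J(\xi,Fr_{\Sigma_1})$. Subtracting and using $tb(K_2)=tw_{K_2}(\xi,Fr_{\Sigma_2})$ gives
$$\widetilde{tb}_\Sigma(K_1\#K_2,J)=\widetilde{tb}_{\Sigma_1}(K_1,J)+tb(K_2)+1=\widetilde{Tb}(K_1,J)+Tb(K_2)+1,$$
which is a lower bound for the maximum $\widetilde{Tb}(K_1\#K_2,J)$.

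For the upper bound I would take a representative $K=\partial\Sigma$ of the relative knot type of $K_1\#K_2$ (with reference $J$) that attains $\widetilde{Tb}(K_1\#K_2,J)$ and apply the relative form of Lemma \ref{split}: the splitting $2$-sphere $S$ produces $K_1'\subset(M_1,\xi_1)$ of type $\kappa_1$ and homologous to $J$, and $K_2'\subset(M_2,\xi_2)$ of type $\kappa_2$ and null-homologous, together with the relative splitting identity $\widetilde{tb}_\Sigma(K,J)=\widetilde{tb}_{\Sigma_1'}(K_1',J)+tb(K_2')+1$ (this is the ``relative version'' promised in the last line of the proof of Lemma \ref{split}, obtained by tracking the boundary component $J$ through that argument). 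Because $K_1'$ is a representative of the relative knot type of $K_1$ with reference $J$, and $K_2'$ a representative of $\kappa_2$, the definitions of the maxima give $\widetilde{tb}_{\Sigma_1'}(K_1',J)\leq\widetilde{Tb}(K_1,J)$ and $tb(K_2')\leq Tb(K_2)$, whence $\widetilde{Tb}(K_1\#K_2,J)\leq\widetilde{Tb}(K_1,J)+Tb(K_2)+1$. Combining the two inequalities proves the claim.

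The routine part is the twist bookkeeping in the first two paragraphs, which is essentially a relative restatement of Lemma \ref{sum} once one checks that $J$ lies outside the connected-sum region. The main obstacle is the upper bound: I must ensure the splitting of Lemma \ref{split} can be arranged so that the auxiliary sphere $S$ is disjoint from $J$ (so $J$ survives intact in $M_1$ as the reference knot), and that the analysis of the odd number of intersections of $\alpha'$ with $\Gamma_S$ still yields exactly a single $+1$ contribution in the presence of the extra boundary component $J$. Making precise this ``relative version'' of Lemma \ref{split}, and verifying that $\widetilde{tb}_{\Sigma_1'}(K_1',J)$ is computed with a surface $\Sigma_1'$ that genuinely co-bounds $K_1'\cup J$ in $M_1$, is the step I would write out in full detail.
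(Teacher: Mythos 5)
Your proposal is correct and follows essentially the same route as the paper: the lower bound by connect-summing the maximizing representatives via Lemma \ref{sum}, and the upper bound by splitting a maximizing representative via Lemma \ref{split} and bounding each piece by its maximum. The extra care you flag at the end --- arranging the splitting sphere disjoint from $J$ and tracking $J$ through the splitting identity --- is exactly the ``relative version'' the paper's proof of Lemma \ref{split} invokes without detail, so your write-up is if anything more complete than the paper's.
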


\begin{proof} 
Because $\xi_i$ are tight, $\widetilde{tb}(K_1,J)$ and $tb(K_2)$ are bounded. For Legendrian knots $K_1, K_2, J$ as above, Lemma \ref{sum} gives $\widetilde{Tb}(K_1,J)+Tb(K_2)+1=\widetilde{tb}(K_1\#K_2,J)\leq\widetilde{Tb}(K_1\#K_2,J)$. Conversely, if $\widetilde{tb}(K_1\#K_2,J)=\widetilde{Tb}(K_1\#K_2,J)$, then Lemma \ref{split} implies that $\widetilde{Tb}(K_1\#K_2,J)=\widetilde{tb}(K_1,J)+tb(K_2)+1\leq\widetilde{Tb}(K_1,J)+Tb(K_2)+1$.
\end{proof}

The above also follows from the relative structure theorem (Theorem \ref{relstr}) and directly extends to the case when both summands are homologous to another knot.

\begin{proposition} 
For homologous Legendrian knots $K_i, J_i\subset(M_i,\xi_i)$ with $K_i\cup J_i=\partial S_i,i=1,2$. Assume $\xi_i$ is tight, and $\tilde{tb}(K_1\#K_2,J_1\cup J_2)=\widetilde{Tb}(K_1\#K_2,J_1\cup J_2)$. Then $\widetilde{Tb}(K_1\#K_2,J_1\cup J_2)=\widetilde{Tb}(K_1,J)+\widetilde{Tb}(K_2,J_2)+1$, where $J_1\cup J_2$ in the term $\widetilde{Tb}(K_1\#K_2,J_1\cup J_2)$ is a knot in $M_1\#M_2$ .
\end{proposition}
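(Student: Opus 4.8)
The plan is to follow the template of the preceding (semi-relative) proposition, now carrying a reference knot in each summand. The strategy has two layers: first I would establish additivity at the level of the non-maximal invariant $\widetilde{tb}$ directly from its definition as a difference of contact-plane twistings, and then upgrade to the maximal invariant $\widetilde{Tb}$ by the usual two-sided inequality argument, where the forward inequality comes from Lemma \ref{sum} and the reverse from the relative version of the splitting Lemma \ref{split}.

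For the first layer I would build the relative Seifert surface for the sum explicitly. Performing the Legendrian connected sum of $K_1$ and $K_2$ inside standard contact balls $B_i$ centered at points $p_i\in K_i$ chosen disjoint from the $J_i$, and gluing $S_1$ and $S_2$ along the connect-sum band as in Lemma \ref{sum}, produces a surface $S=(S_1\setminus D_1)\cup(S_2\setminus D_2)$ with $\partial S=(K_1\# K_2)\cup J_1\cup J_2$. Since the operation is supported near the $p_i$ and leaves a neighborhood of each $J_i$ untouched, the Seifert framing $Fr_S$ restricts to $Fr_{S_i}$ near $J_i$, so $tw_{J_i}(\xi,Fr_S)=tw_{J_i}(\xi,Fr_{S_i})$. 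The local analysis of Lemma \ref{sum}—the two positive elliptic singularities on the arcs $a_i$ merge and disappear, producing a net $+1$—applies verbatim to the $K$-side, giving $tw_{K_1\#K_2}(\xi,Fr_S)=tw_{K_1}(\xi,Fr_{S_1})+tw_{K_2}(\xi,Fr_{S_2})+1$. Subtracting the unchanged $J$-twistings from the definition $\widetilde{tb}_S=tw_K(\xi,Fr_S)-\sum_i tw_{J_i}(\xi,Fr_S)$ then yields
$$\widetilde{tb}_S(K_1\#K_2,J_1\cup J_2)=\widetilde{tb}_{S_1}(K_1,J_1)+\widetilde{tb}_{S_2}(K_2,J_2)+1.$$

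For the second layer, choosing $K_1,K_2$ to realize the maxima $\widetilde{Tb}(K_i,J_i)$ and summing produces a representative of $(K_1\#K_2,J_1\cup J_2)$ whose $\widetilde{tb}$ equals $\widetilde{Tb}(K_1,J_1)+\widetilde{Tb}(K_2,J_2)+1$, whence $\widetilde{Tb}(K_1\#K_2,J_1\cup J_2)\geq\widetilde{Tb}(K_1,J_1)+\widetilde{Tb}(K_2,J_2)+1$. Conversely, invoking the hypothesis $\widetilde{tb}(K_1\#K_2,J_1\cup J_2)=\widetilde{Tb}(K_1\#K_2,J_1\cup J_2)$, I would apply the relative form of Lemma \ref{split} to split a maximizing $K_1\# K_2$ along a convex splitting sphere into $K_i'\subset(M_i,\xi_i)$ with $\widetilde{tb}(K_1\#K_2,J_1\cup J_2)=\widetilde{tb}(K_1',J_1)+\widetilde{tb}(K_2',J_2)+1\leq\widetilde{Tb}(K_1,J_1)+\widetilde{Tb}(K_2,J_2)+1$, and then combine the two inequalities.

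The main obstacle is the relative splitting step, i.e. making Lemma \ref{split} work in the presence of the extra boundary components $J_1,J_2$. Concretely, I must arrange the splitting sphere $S_{\mathrm{sep}}=\partial B_1=\partial B_2$ so that it separates $M_1\setminus B_1$ from $M_2\setminus B_2$ with $J_1$ and $J_2$ on opposite sides, meets $S$ only in the single splitting arc $\alpha$ joining the merged singularities on $K_1\# K_2$, and cuts $S$ into two genuine relative Seifert surfaces $S_i'$ for $(K_i',J_i)$. Because the $J_i$ sit in the interiors $M_i\setminus B_i$ and the dividing-set and half-twist bookkeeping of Lemma \ref{split} is localized near $\alpha$, the twisting contributions of the $J_i$ should be computed entirely within their own summands and be unaffected by the split; verifying this locality carefully, and that the single left half-twist recovered along the parallel arc $\alpha'$ supplies exactly the $+1$, is the crux of the argument.
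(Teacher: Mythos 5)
Your proposal is correct and takes essentially the same route as the paper: the paper offers no separate proof for this proposition, stating that it ``directly extends'' the semi-relative case, whose proof is precisely your two-sided inequality argument --- the lower bound from the additivity of Lemma \ref{sum} applied to maximizing representatives, and the upper bound from the hypothesis together with the relative version of Lemma \ref{split} (whose proof in the paper explicitly notes it holds when $\Sigma$ has extra boundary components). Your bookkeeping that the connect sum is supported away from the $J_i$, so the terms $tw_{J_i}(\xi,Fr_S)$ are unchanged and the $+1$ survives (in contrast to the fully relative sum of Lemma \ref{reladd}, where the two $+1$'s cancel), is exactly the point the paper leaves implicit.
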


\begin{remark}\label{relsum} (Relative Legendrian connected sum) 
 Consider homologous Legendrian knots  $K_i, J_i\subset(M_i,\xi_i)$ with Seifert surface $\Sigma_i, i=1,2$. Take an arc $\alpha_i\subset \Sigma_i$ with $\partial \alpha_i\subset \partial \Sigma_i$ such that $\alpha_i$ runs from $K_i$ to $J_i$. 
 
Take a neighborhood $B_i\subset M_i$ of $\alpha_i$ (with convex boundary) and an orientation-reversing diffeomorphism $f:\partial B_1\rightarrow \partial B_2$. Form the connected sum $M_1\#_{\alpha_1,\alpha_2} M_2\:= (M_1\setminus B_1)\cup_f(M_2\setminus B_2)$ as follows. First, $B_i\cap \Sigma_i$ is a 2-disc $D_i\subset \Sigma_i$ with four corners, $a_i,b_i,c_i,d_i$, whose boundary $\partial D_i$ is a union of four arcs. Isotop the interior of $\Sigma$ so that the four corners of $\partial D_i$ are singularities in the foliation of $\Sigma$.

We have $\partial D_i=\gamma_{K_i}\cup\gamma_{J_i}\cup\gamma_i'\cup\gamma_i''$ where $\gamma_{K_i}\subset K_i$ with $\partial\gamma_{K_i}=\{a_i,b_i\}$, $\gamma_{J_i}\subset J_i$ with $\partial \gamma_{J_i}=\{c_i,d_i\}$ (Figure 14). Isotop $\partial B_i$ to Legendrian realize $\partial D_i$ and isotop the interior of $D_i$ to make it convex. By tightness, $tb(D_i)\leq-1$. Also, $\gamma_{K_i}$ (resp., $\gamma_{J_i}$) intersects $\Gamma_{D_i}$ once and contains a negative half-twist along $K_i$ (resp., $J_i$).

Now take an orientation-reversing diffeomorphism $f:(M_1\setminus B_1)\rightarrow(M_2\setminus B_2)$ such that $f(\partial D_1)=\partial D_2$ and $f(a_1)=a_2,\ f(b_1)=b_2,\ f(c_1)=c_2,\ f(d_1)=d_2$, so that $f(\gamma_{K_1})$ is isotopic to $\gamma_{K_2}$, $f(\gamma_{J_1})$ is isotopic to $\gamma_{K_2}$, $f(\gamma_1')$ is isotopic to $\gamma_2'$, and $f(\gamma_1'')$ is isotopic to $\gamma_2''$ (rel boundary as unoriented arcs). Then in $M_1\#_{\alpha_1,\alpha_2}M_2$, the knots $K=(K_1\setminus \gamma_{K_1})\cup_f(K_2\setminus \gamma_{K_2})$ and $J=(J_1\setminus \gamma_{J_1})\cup_f(J_2\setminus \gamma_{J_2})$ co-bound a surface $\Sigma=(\Sigma_1\setminus D_1)\cup_f(\Sigma_2\setminus D_2)$ and are of type $K_1\#_{\alpha_1,\alpha_2}K_2$ and $J_1\#_{\alpha_1,\alpha_2}J_2$, respectively. Moreover, by construction $tw_K(\xi,Fr_\Sigma)=tw_{K_1}(\xi_1,Fr_{\Sigma_1})+tw_{K_2}(\xi_2,Fr_{\Sigma_2})+1$ and $tw_J(\xi,Fr_\Sigma)=tw_{J_1}(\xi_1,Fr_{\Sigma_1})+tw_{J_2}(\xi_2,Fr_{\Sigma_2})+1$.
\end{remark}

\begin{figure}[htbp]
\centering
\includegraphics[scale=.25]{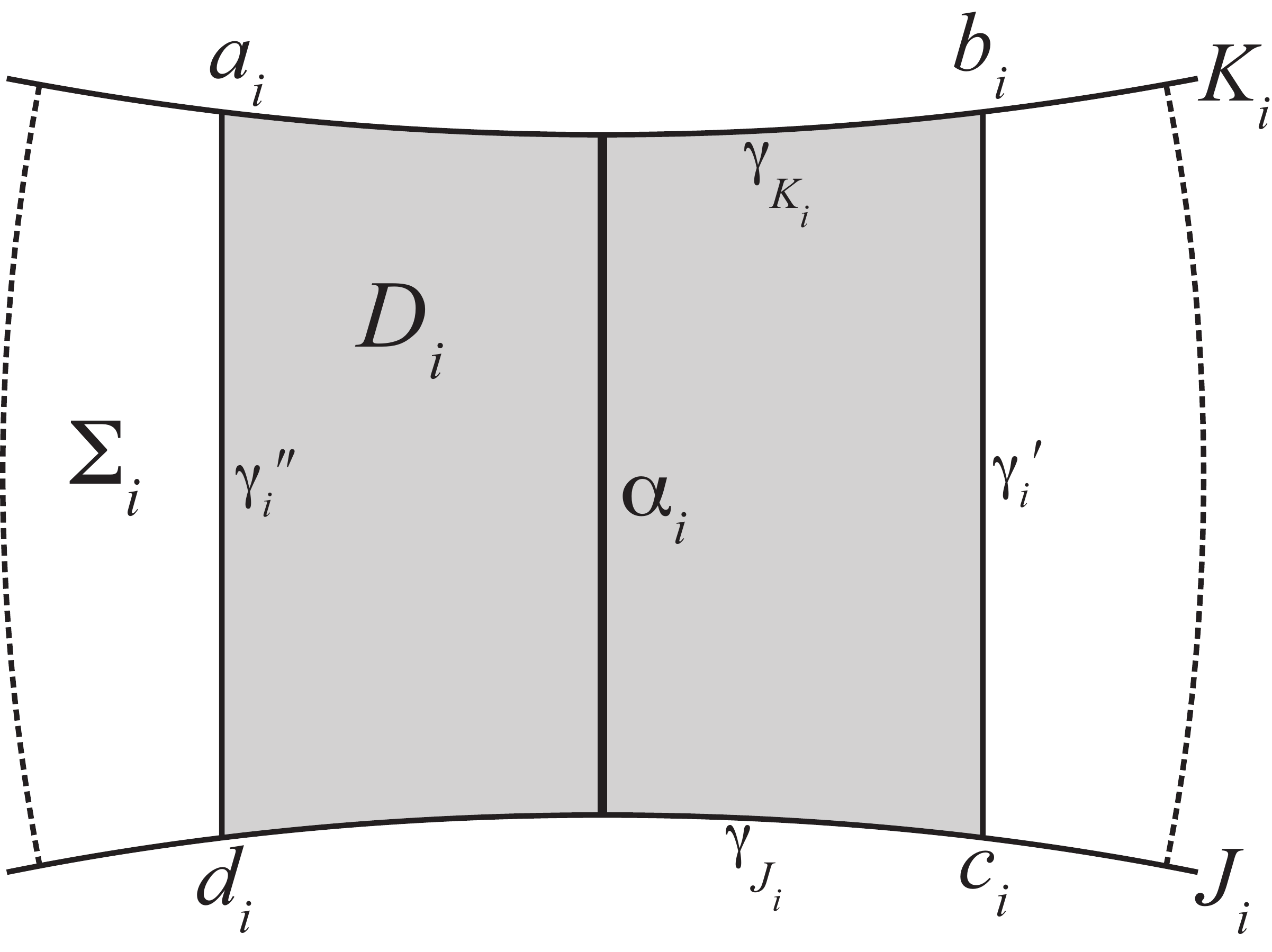}
\caption{Local setup for a relative Legendrian connected sum.}
\end{figure}

The diffeomorphism type of $\Sigma_{\alpha_1,\alpha_2}$ with $\partial\Sigma_{\alpha_1,\alpha_2}=(K_1\#_{\alpha_1,\alpha_2}K_2)\cup (J_1\#_{\alpha_1,\alpha_2}J_2)$ and the link type of $K_1\#_{\alpha_1,\alpha_2}K_2\cup J_1\#_{\alpha_1,\alpha_2}J_2$ depend on the choice of the arcs $\alpha_i$ so this construction is not well-defined as a connected sum of the links $K_i\cup J_i$.
 
\begin{lemma} 
Consider homologous Legendrian knots $K_i$, $J_i$ in a tight contact 3-manifold $(M_i,\xi_i), i=1,2$. In $M_1\#_{\alpha_1,\alpha_2}M_2\:=(M_1\setminus B_1)\cup_f(M_2\setminus B_2)$, the diffeomorphism type of $M_1\#_{\alpha_1,\alpha_2}M_2$, the isotopy type of $\xi_1\#_{\alpha_1,\alpha_2}\xi_2$, and the knot type of $K_1\#_{\alpha_1,\alpha_2}K_2$ and $J_1\#_{\alpha_1,\alpha_2}J_2$ are independent of the choices of the $\alpha_i, B_i,f$.
\end{lemma}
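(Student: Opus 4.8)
The plan is to decompose the construction into its constituent pieces—the underlying connected sum of $3$-manifolds, the contact connected sum, and the two knot connected sums—and to establish well-definedness of each by reducing to results already in hand. The guiding observation is that a single choice of $(\alpha_1,\alpha_2,B_1,B_2,f)$ fixes all of these \emph{simultaneously}, but the projection onto any one piece forgets the correlation between the two summations that the arcs $\alpha_i$ introduce. It is exactly that correlation, and nothing in the individual pieces, that genuinely depends on the $\alpha_i$, which is why the subsequent remark can record the link type and the diffeomorphism type of $\Sigma$ as \emph{not} well-defined while this lemma asserts the individual knot types are.

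First I would record that $B_i$, being a regular neighborhood of the embedded arc $\alpha_i$, is a $3$-ball, and that, as arranged in Remark \ref{relsum}, its boundary is $C^\infty$-close to a convex $2$-sphere whose dividing set $\Gamma_{\partial B_i}$ is a single curve. By Giroux's Criterion together with Eliashberg's uniqueness of the tight structure on the ball, each $(B_i,\xi_i|_{B_i})$ is a standard contact $3$-ball, so $(M_1\setminus B_1)\cup_f(M_2\setminus B_2)$ is literally a contact connected sum in the sense of Remark \ref{contactsum}. The diffeomorphism type of $M_1\#M_2$ is then independent of the choices because any two orientation-preserving embeddings of a ball into a connected manifold are ambient isotopic (disc theorem) and the orientation-reversing gluing is determined up to isotopy; the isotopy type of $\xi_1\#_f\xi_2$ is independent of $B_i,p_i,f$ by Theorem \ref{colin} and the uniqueness statement recalled in Remark \ref{contactsum}. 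This disposes of the first two assertions without any reference to the knots.

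Next I would treat the two knots separately. The crucial point is that $K_i$ meets $B_i$ in the single arc $\gamma_{K_i}$, which is boundary-parallel in the ball since it is a side of the properly embedded disc $D_i=B_i\cap\Sigma_i$; likewise $J_i$ meets $B_i$ in the single boundary-parallel arc $\gamma_{J_i}$. Because $f$ is chosen to match the endpoints $\{a_i,b_i\}=\partial\gamma_{K_i}$ and $\{c_i,d_i\}=\partial\gamma_{J_i}$ compatibly with the orientations of the knots, the knot $K=(K_1\setminus\gamma_{K_1})\cup_f(K_2\setminus\gamma_{K_2})$ is exactly the oriented connected sum $K_1\#K_2$ carried out inside $M_1\#M_2$, and similarly $J=J_1\#J_2$. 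Classical well-definedness of the oriented knot connected sum now applies verbatim: the point $\alpha_i\cap K_i$ can be slid to any other point of $K_i$ by an ambient isotopy of $M_i$ preserving $K_i$ (using connectedness of $K_i$), and matching orientations pins down $f$ along the knot arcs up to isotopies that leave the knot type unchanged. The identical argument handles $J_1\#J_2$.

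Finally I would make explicit why $\alpha_i$ disappears from the individual knot types. In building $K_1\#K_2$ the only data used are the balls $B_i$ and the single trivial arc in which each meets $K_i$; the side of $\partial D_i$ lying on $J_i$, and hence the route of $\alpha_i$ from $K_i$ to $J_i$, never enters. Thus distinct admissible arcs yield ambiently isotopic $K_1\#K_2$ (and symmetrically isotopic $J_1\#J_2$), even though they correlate the two summations differently and so may alter the link type of $(K_1\#K_2)\cup(J_1\#J_2)$ and the surface $\Sigma$. I expect the main obstacle to be the second step: one must check that the relative setup, in which the Legendrian knots pass through the summing balls, really is an instance of Remark \ref{contactsum}—that is, that arranging $\partial B_i$ convex with connected dividing set can be performed compatibly with the Legendrian realization of $\partial D_i$ from Remark \ref{relsum}, so that Theorem \ref{colin} may be invoked unchanged.
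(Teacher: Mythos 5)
Your proposal is correct and takes essentially the same route as the paper's proof: isotopy of $3$-balls in a connected $3$-manifold for the diffeomorphism type, Colin's theorem (Theorem \ref{colin}) for the isotopy class of $\xi_1\#\xi_2$, and sliding the summing balls along $K_i$ and $J_i$ (the classical well-definedness of oriented knot connected sum) for the individual knot types. Your additional points---that $(B_i,\xi_i\rvert_{B_i})$ is a standard contact ball so the construction is an instance of Remark \ref{contactsum}, and that the arcs $\alpha_i$ affect only the link type of $(K_1\#K_2)\cup(J_1\#J_2)$, not the individual knot types---merely make explicit what the paper leaves implicit and are consistent with its subsequent remark.
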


\begin{proof} 
Consider the relative Legendrian connected sums of $(K_1,J_1,M_1,\xi_1)$ with $(K_2,J_2, M_2,\xi_2)$ along two sets of arcs $\{\alpha_1,\alpha_2\}$ and $\{\beta_1,\beta_2\}$. Then $M_1\#_{\alpha_1,\alpha_2}M_2$ and $M_1\#_{\beta_1,\beta_2}M_2$ are diffeomorphic as smooth manifolds because any two 3-balls in a connected 3-manifold are isotopic, which extends to a global isotopy between the diffeomorphisms $f_\alpha$ and $f_\beta$. Moreover, Colin's theorem gives that $\xi_1\#_{\alpha_1,\alpha_2}\xi_2$ and $\xi_1\#_{\beta_1,\beta_2}\xi_2$ are isotopic. The knots $K_1\#_{\alpha_1,\alpha_2}K_2$ and $K_1\#_{\beta_1,\beta_2}K_2$ are of the same knot type in $M_1\# M_2$ ($\gamma_{K_i}$ do depend on the choice of $\alpha_i$ or $\beta_i$ but we can isotop the $B_i$ (slide them along $K_i$ and $J_i$) so that $\gamma_{K_i}$ coincide for either $\alpha_i$ or $\beta_i$). Similarly,  $J_1\#_{\alpha_1,\alpha_2}J_2$ and $J_1\#_{\beta_1,\beta_2}J_2$ are of the same knot type in $M_1\# M_2$.
\end{proof}

\begin{lemma} 
In the relative Legendrian connected sum of $(K_1,J_1,M_1,\xi_1)$ and $(K_2,J_2,M_2,\xi_2)$, $tw_{K_1\#K_2}(\xi_1\#\xi_2,Fr_{\Sigma_1\#\Sigma_2})$ and $tw_{J_1\#J_2}(\xi_1\#\xi_2,Fr_{\Sigma_1\#\Sigma_2})$ are well-defined and independent of the choice of arcs $\alpha_i$.
\end{lemma}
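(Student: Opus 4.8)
The plan is to deduce the lemma from the additivity identity already isolated in Remark~\ref{relsum},
$$tw_{K_1\#K_2}(\xi_1\#\xi_2,Fr_{\Sigma_1\#\Sigma_2})=tw_{K_1}(\xi_1,Fr_{\Sigma_1})+tw_{K_2}(\xi_2,Fr_{\Sigma_2})+1,$$
together with its twin for $J_1\#J_2$, and then to observe that nothing on the right refers to the arcs. The summands $tw_{K_i}(\xi_i,Fr_{\Sigma_i})$ are computed entirely inside $(M_i,\xi_i)$ from the fixed pair $(K_i\subset\Sigma_i)$ and are in particular unaffected by the choice of $\alpha_i$, while the $+1$ is the constant produced by the connect-sum cusp region. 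So the first step is simply to record that the right-hand side is manifestly arc-independent; the work is in justifying that the left-hand side equals it for every admissible choice of arcs.

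First I would check that the construction of Remark~\ref{relsum} delivers this formula with the \emph{same} constant $+1$ regardless of where $\alpha_i$ meets $K_i$ and $J_i$. This is the local half-twist bookkeeping of Lemma~\ref{sum}: near $\partial B_i$ the picture is two discs meeting a left and a right cusp, and the net change in the twisting of the contact planes relative to the Seifert framing depends only on this local model, not on the position of $\alpha_i$ along the boundary. Hence the same $+1$ occurs for all arcs and the formula holds uniformly; the identical discussion applies verbatim to $J_1\#J_2$.

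The main obstacle I anticipate is conceptual rather than computational: because $K_1\#K_2$ is only homologous, not null-homologous, to $J_1\#J_2$, the Seifert framing $Fr_{\Sigma_1\#\Sigma_2}$ can in principle depend on the cobounding surface, and that surface $(\Sigma_1\setminus D_1)\cup_f(\Sigma_2\setminus D_2)$ does change with the arcs through the placement of the excised discs $D_i$. The cleanest way to see that the twisting is nonetheless unchanged is homological. Using the preceding lemma to isotop one connected sum onto the other, I reduce to two surfaces $\Sigma,\Sigma'$ sharing the oriented boundary $(K_1\#K_2)\cup(J_1\#J_2)$; then $tw_K(\xi,Fr_\Sigma)-tw_K(\xi,Fr_{\Sigma'})=K'\cdot C$ for the contact push-off $K'$ and the closed cycle $C=\Sigma\cup(-\Sigma')$. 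Since $\Sigma$ and $\Sigma'$ arise from the fixed $\Sigma_1,\Sigma_2$ by relocating a small boundary disc, they are homologous rel boundary, so $[C]=0$ in $H_2(M_1\#M_2)\cong H_2(M_1)\oplus H_2(M_2)$ and $K'\cdot C=0$. This confirms directly that the twisting---and hence both $tw_{K_1\#K_2}$ and $tw_{J_1\#J_2}$---is independent of the arcs.
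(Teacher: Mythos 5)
Your proposal is correct in substance but takes a genuinely different route from the paper's. The paper argues directly and geometrically: it isotops the arcs so that $\partial\alpha_i$ coincides with $\partial\beta_i$, which makes the boundary arcs $\gamma_{K_i}$ and $\gamma_{J_i}$ the same for both choices; the two connected-sum knots then correspond under the diffeomorphism between $M_1\#_{\alpha_1,\alpha_2}M_2$ and $M_1\#_{\beta_1,\beta_2}M_2$, and this diffeomorphism is extended to a framing-preserving contactomorphism of neighborhoods of the knots, under which the contact framings and the Seifert framings are identified, giving the result at once. You instead read off arc-independence from the additivity formula of Remark~\ref{relsum}, whose right-hand side manifestly does not involve the arcs, and then handle the genuinely delicate point---possible dependence of $Fr_{\Sigma_1\#\Sigma_2}$ on the cobounding surface, since $K_1\#K_2$ is only homologous to $J_1\#J_2$---by the intersection-theoretic computation $tw_K(\xi,Fr_\Sigma)-tw_K(\xi,Fr_{\Sigma'})=K'\cdot C$ with $[C]=0$; this even covers the case where $[C]$ picks up a multiple of the splitting sphere, since the push-off crosses that sphere algebraically zero times. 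In one respect your route is more careful than the paper's: a contactomorphism of neighborhoods identifies framings only locally, whereas the Seifert framing is determined by the global surface, and your homological step addresses exactly that residual ambiguity. Two caveats are worth recording. First, your reduction to ``two surfaces sharing the oriented boundary'' silently relies on the ball-sliding/endpoint-matching in the proof of the preceding lemma to make the two knots coincide as Legendrian knots in a single contact manifold; a merely smooth identification would not preserve the contact-normal push-off $K'$, and your difference formula would not make sense. Second, invoking the formula of Remark~\ref{relsum} here inverts the paper's logical order (the paper proves this lemma and its corollary before deriving Lemma~\ref{reladd}); this is not circular, since that formula is established by construction for each fixed choice of arcs, but it does turn the later additivity lemma into a restatement, and you rightly flag that the uniform $+1$ must be checked to be independent of where the arcs meet $K_i$ and $J_i$, which is the local half-twist bookkeeping of Lemma~\ref{sum}.
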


\begin{proof} 
Consider the relative Legendrian connected sum of $(K_1,J_1,M_1,\xi_1)$ and $(K_2,J_2,M_2,\xi_2)$ along two sets of arcs $\alpha_1,\alpha_2$ and $\beta_1,\beta_2$, where $\alpha_i,\beta_i\subset\Sigma_i$. Then we can isotop $\alpha_i$ so that $\partial \alpha_i$ coincides with $\partial \beta_i$, thus the arcs $\gamma_{K_i}$ and $\gamma_{J_i}$ are the same for both $\alpha_i$ and $\beta_i$. Therefore, under the diffeomorphism between $M_1\#_{\alpha_i,\alpha_2}M_2$ and $M_1\#_{\beta_i,\beta_2}M_2$, $K_1\#_{\alpha_1,\alpha_2}K_2$ is sent to $K_1\#_{\beta_1,\beta_2}K_2$ and $J_1\#_{\alpha_1,\alpha_2}J_2$ is sent to $J_1\#_{\beta_1,\beta_2}J_2$. This diffeomorphism extends to a framing-preserving contactomorphism from a neighborhood of $K_1\#_{\alpha_1,\alpha_2}K_2$ to a neighborhood of $K_1\#_{\beta_1,\beta_2}K_2$ (similarly for $J_1\#_{\alpha_1,\alpha_2}J_2$ and $J_1\#_{\beta_1,\beta_2}J_2$). Under this contactomorphism, the contact framings and the Seifert framings are identified, and the result follows.
\end{proof}

\begin{corollary} 
The value of $\widetilde{tb}_{\Sigma_1\#\Sigma_2}(K_1\#K_2,J_1\#J_2)$ in the relative Legendrian connected sum is independent of the choice of arcs $\alpha_i, B_i, f$.
\end{corollary}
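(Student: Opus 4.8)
The plan is to observe that the statement is essentially immediate from the definition of the relative Thurston--Bennequin invariant together with the preceding lemma, so the work is reduced to assembling ingredients that are already in hand. First I would unwind the definition: in the relative Legendrian connected sum the invariant is, by the definition of $\widetilde{tb}$,
$$\widetilde{tb}_{\Sigma_1\#\Sigma_2}(K_1\#K_2,J_1\#J_2)=tw_{K_1\#K_2}(\xi_1\#\xi_2,Fr_{\Sigma_1\#\Sigma_2})-tw_{J_1\#J_2}(\xi_1\#\xi_2,Fr_{\Sigma_1\#\Sigma_2}).$$
The preceding lemma asserts precisely that each of the two twisting numbers on the right-hand side is well-defined and independent of the choice of arcs $\alpha_i$; combined with the earlier lemma, which establishes that the diffeomorphism type of $M_1\#_{\alpha_1,\alpha_2}M_2$, the isotopy class of $\xi_1\#_{\alpha_1,\alpha_2}\xi_2$, and the knot types of $K_1\#K_2$ and $J_1\#J_2$ are all independent of $\alpha_i,B_i,f$, this yields independence of each summand with respect to all three choices. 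Since the difference of two quantities that are each independent of the choices is again independent of the choices, the corollary follows.

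Second, I would record the additivity that falls out of Remark \ref{relsum}, which makes the independence fully transparent. Substituting the two framing identities $tw_K(\xi,Fr_\Sigma)=tw_{K_1}(\xi_1,Fr_{\Sigma_1})+tw_{K_2}(\xi_2,Fr_{\Sigma_2})+1$ and $tw_J(\xi,Fr_\Sigma)=tw_{J_1}(\xi_1,Fr_{\Sigma_1})+tw_{J_2}(\xi_2,Fr_{\Sigma_2})+1$ into the definition, the two $+1$ corrections cancel and one obtains
$$\widetilde{tb}_{\Sigma_1\#\Sigma_2}(K_1\#K_2,J_1\#J_2)=\widetilde{tb}_{\Sigma_1}(K_1,J_1)+\widetilde{tb}_{\Sigma_2}(K_2,J_2).$$
The right-hand side is manifestly a function of the original data $(K_i,J_i,\Sigma_i)$ alone and involves none of the gluing choices, so this formula by itself proves the corollary and simultaneously exhibits the additivity of the relative invariant.

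Since the genuine content has already been carried out in the two preceding lemmas, I do not expect any real obstacle to remain. The only points deserving a moment's care are, first, confirming that the Seifert framing $Fr_{\Sigma_1\#\Sigma_2}$ appearing in the definition of $\widetilde{tb}$ coincides with the framing with respect to which the preceding lemma proves independence — it does, since $\Sigma_1\#\Sigma_2=(\Sigma_1\setminus D_1)\cup_f(\Sigma_2\setminus D_2)$ is exactly the glued Seifert surface constructed in Remark \ref{relsum}; and second, checking that the cancellation of the $+1$ terms is legitimate, i.e. that the $K$-twist and the $J$-twist acquire the \emph{same} $+1$ correction under the connected sum, which is precisely the pair of formulas recorded at the end of Remark \ref{relsum}.
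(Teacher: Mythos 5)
Your first argument is exactly the paper's route: the corollary is stated without separate proof because it follows immediately from the preceding lemma, which shows each of the two twisting numbers $tw_{K_1\#K_2}(\xi_1\#\xi_2,Fr_{\Sigma_1\#\Sigma_2})$ and $tw_{J_1\#J_2}(\xi_1\#\xi_2,Fr_{\Sigma_1\#\Sigma_2})$ is well-defined and independent of the choices, so their difference is as well. Your second argument via the cancellation of the two $+1$ corrections is also sound, but it simply anticipates the paper's next result (Lemma \ref{reladd}, the additivity formula), so it is a restatement of later content rather than a genuinely different proof.
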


\begin{lemma} \label{reladd}
$\widetilde{tb}(K_1\#K_2,J_1\#J_2)=\widetilde{tb}_{\Sigma_1}(K_1,J_1)+\widetilde{tb}_{\Sigma_2}(K_2,J_2)$.
\end{lemma}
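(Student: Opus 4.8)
The plan is to deduce the identity directly from the twisting formulas recorded in Remark~\ref{relsum}, where all of the genuine geometry already lives. Writing $K=K_1\#K_2$, $J=J_1\#J_2$, and $\Sigma=(\Sigma_1\setminus D_1)\cup_f(\Sigma_2\setminus D_2)$ for the Seifert surface produced by the relative connected sum, and setting $\xi=\xi_1\#\xi_2$, the definition of the invariant gives $\widetilde{tb}_\Sigma(K,J)=tw_K(\xi,Fr_\Sigma)-tw_J(\xi,Fr_\Sigma)$. So the entire task is to expand these two twisting numbers.

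First I would substitute the two twisting identities from Remark~\ref{relsum}, namely $tw_K(\xi,Fr_\Sigma)=tw_{K_1}(\xi_1,Fr_{\Sigma_1})+tw_{K_2}(\xi_2,Fr_{\Sigma_2})+1$ together with the corresponding identity $tw_J(\xi,Fr_\Sigma)=tw_{J_1}(\xi_1,Fr_{\Sigma_1})+tw_{J_2}(\xi_2,Fr_{\Sigma_2})+1$. Taking the difference, the key point is that the two $+1$ corrections cancel: each records the single left half-twist of the contact planes that the connected-sum arc contributes along $K$ and along $J$ respectively, and since these enter $\widetilde{tb}_\Sigma$ with opposite signs they annihilate. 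Regrouping the surviving terms by summand yields $\bigl(tw_{K_1}(\xi_1,Fr_{\Sigma_1})-tw_{J_1}(\xi_1,Fr_{\Sigma_1})\bigr)+\bigl(tw_{K_2}(\xi_2,Fr_{\Sigma_2})-tw_{J_2}(\xi_2,Fr_{\Sigma_2})\bigr)=\widetilde{tb}_{\Sigma_1}(K_1,J_1)+\widetilde{tb}_{\Sigma_2}(K_2,J_2)$, which is the claim.

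The real content is therefore conceptual rather than computational, and the contrast with the classical Lemma~\ref{sum} is the whole point: there the connected sum produces a net $+1$ in $tb$, whereas here the two $+1$ contributions appear symmetrically in the $K$- and $J$-twistings and exactly cancel, giving strict additivity with no correction term. The one step I expect to need care is verifying that a single relative connected sum simultaneously and identically supplies the $+1$ to both boundary knots; this is precisely what the construction in Remark~\ref{relsum} arranges, since one neighborhood $B_i$ of an arc $\alpha_i$ running from $K_i$ to $J_i$ produces both half-twist contributions at once. Finally I would appeal to the preceding corollary, which guarantees that $\widetilde{tb}_{\Sigma_1\#\Sigma_2}(K_1\#K_2,J_1\#J_2)$ is independent of the choices of $\alpha_i,B_i,f$, so the asserted equality is well-posed.
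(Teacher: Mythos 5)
Your proposal is correct and follows essentially the same route as the paper: expand $\widetilde{tb}_\Sigma(K_1\#K_2,J_1\#J_2)$ by definition, substitute the two twisting identities $tw_{K_1\#K_2}(\xi,Fr_\Sigma)=tw_{K_1}(\xi_1,Fr_{\Sigma_1})+tw_{K_2}(\xi_2,Fr_{\Sigma_2})+1$ and its analogue for $J_1\#J_2$ from Remark~\ref{relsum}, observe that the two $+1$ corrections cancel in the difference, and regroup by summand. Your added remarks on the well-posedness of the choices and the contrast with Lemma~\ref{sum} are sound glosses but do not change the argument, which matches the paper's proof.
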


\begin{proof} 
By definition, $\widetilde{tb}(K_1\#K_2,J_1\#J_2)=tw_{K_1\#K_2}(\xi,Fr_\Sigma)-tw_{J_1\#J_2}(\xi,Fr_\Sigma)$ or $\big{(}tw_{K_1}(\xi_1,Fr_{\Sigma_1})+tw_{K_2}(\xi_2,Fr_{\Sigma_2})+1\big{)}-\big{(}tw_{J_1}(\xi_1,Fr_{\Sigma_1})+tw_{J_2}(\xi_2,Fr_{\Sigma_2})+1\big{)}$ and rearranging terms, we obtain $\widetilde{tb}_{\Sigma_1}(K_1,J_1)+\widetilde{tb}_{\Sigma_2}(K_2,J_2)$. 
\end{proof}

\begin{proposition} 
Consider a homologous Legendrian knot pair $(K_i, J_i)$ in a tight contact 3-manifold $(M_i,\xi_i)$ such that $\widetilde{tb}(K_i,J_i)=\widetilde{Tb}(K_i,J_i),i=1,2$. Then in the relative Legendrian connected sum $\widetilde{Tb}(K_1\#K_2,J_1\#J_2)\geq\widetilde{Tb}(K_1,J_2)+\widetilde{Tb}(K_2,J_2)$.
\end{proposition}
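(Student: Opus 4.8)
The plan is to exhibit the relative Legendrian connected sum $(K_1\#K_2,\,J_1\#J_2)$ as a single distinguished representative of its relative knot type and to read off its relative Thurston--Bennequin invariant directly from the additivity of Lemma~\ref{reladd}. Since $\xi_i$ is tight for each $i$, the contact connected sum $\xi_1\#\xi_2$ is tight (Remark~\ref{contactsum}), so $\widetilde{Tb}(K_1\#K_2,J_1\#J_2)$ is a genuine finite maximum of $\widetilde{tb}_\Sigma$ taken over all Legendrian representatives of that relative type. It therefore suffices to produce one representative whose invariant equals $\widetilde{Tb}(K_1,J_1)+\widetilde{Tb}(K_2,J_2)$.

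First I would fix maximizing representatives: by hypothesis $\widetilde{tb}_{\Sigma_i}(K_i,J_i)=\widetilde{Tb}(K_i,J_i)$, so each pair $(K_i,J_i)$ already attains the maximal relative invariant in its own relative type. Next I would form the relative Legendrian connected sum along arcs $\alpha_i\subset\Sigma_i$ as in Remark~\ref{relsum}, obtaining the pair $(K_1\#K_2,\,J_1\#J_2)$ cobounding $\Sigma_1\#\Sigma_2$; by the well-definedness established in the preceding corollary, the value $\widetilde{tb}_{\Sigma_1\#\Sigma_2}(K_1\#K_2,J_1\#J_2)$ is independent of the choices of $\alpha_i,B_i,f$, so it is a legitimate invariant of this representative.

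The computation is then immediate from Lemma~\ref{reladd}: the $+1$ contributions to the twisting along $K_1\#K_2$ and along $J_1\#J_2$ cancel, giving
\[
\widetilde{tb}(K_1\#K_2,J_1\#J_2)=\widetilde{tb}_{\Sigma_1}(K_1,J_1)+\widetilde{tb}_{\Sigma_2}(K_2,J_2)=\widetilde{Tb}(K_1,J_1)+\widetilde{Tb}(K_2,J_2),
\]
where the second equality invokes the hypothesis. As this representative lies in the relative knot type of $(K_1\#K_2,J_1\#J_2)$, its invariant is at most the maximal one, whence $\widetilde{Tb}(K_1\#K_2,J_1\#J_2)\geq\widetilde{Tb}(K_1,J_1)+\widetilde{Tb}(K_2,J_2)$.

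The subtle point is not this inequality --- which is immediate once additivity is in hand --- but why only a lower bound is available. The matching upper bound would require a relative analogue of the splitting Lemma~\ref{split}, together with the extra assumption (as in the semi-relative propositions above) that the maximizer realizing $\widetilde{Tb}(K_1\#K_2,J_1\#J_2)$ itself satisfies $\widetilde{tb}=\widetilde{Tb}$. Because the relative connected sum is not well-defined purely as a connected sum of the links $K_i\cup J_i$ (as noted after Remark~\ref{relsum}), the splitting direction is genuinely more delicate, which is precisely why the statement is phrased as an inequality rather than an equality.
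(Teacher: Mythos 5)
Your proposal is correct and follows essentially the same route as the paper: form the relative Legendrian connected sum of the maximizing representatives and apply the additivity of Lemma~\ref{reladd}, so that the resulting representative realizes $\widetilde{Tb}(K_1,J_1)+\widetilde{Tb}(K_2,J_2)$ and hence bounds $\widetilde{Tb}(K_1\#K_2,J_1\#J_2)$ from below. Your added remarks on finiteness of the maximum in the tight setting and on why the reverse inequality requires the splitting construction are accurate and consistent with the paper's subsequent proposition.
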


\begin{proof} 
By the construction of the relative Legendrian connected sum (Remark \ref{sum}) and by Lemma \ref{reladd} above, $\widetilde{tb}(K_1\#K_2,J_1\#J_2)=\widetilde{Tb}(K_1,J_1)+\widetilde{Tb}(K_2,J_2)$, which implies that $\widetilde{Tb}(K_1\#K_2,J_1\#J_2)\geq\widetilde{Tb}(K_1,J_2)+\widetilde{Tb}(K_2,J_2)$. 
\end{proof}

\begin{remark} (Splitting a relative Legendrian connected sum) Consider a tight contact 3-manifold $(M,\xi)$ with a Legendrian knot pair $(K_1\#K_2,J_1\#J_2)$ which is a relative Legendrian connected sum of $(K_1,J_1,M_1,\xi_1)$ with $(K_2,J_2,M_2,\xi_2)$. Then there exists an embedded splitting 2-sphere $S\subset(M,\xi)$ such that $S\cap\Sigma$ is the union of arcs $\alpha_1$ and $\alpha_2$. Take a 2-disc $D\subset S$ with $\partial D=\gamma'\cup\alpha_1\cup\gamma''\cup\alpha_2$ (Figure \ref{splitting}).

\begin{figure}[htbp]
\centering
\includegraphics[scale=.25]{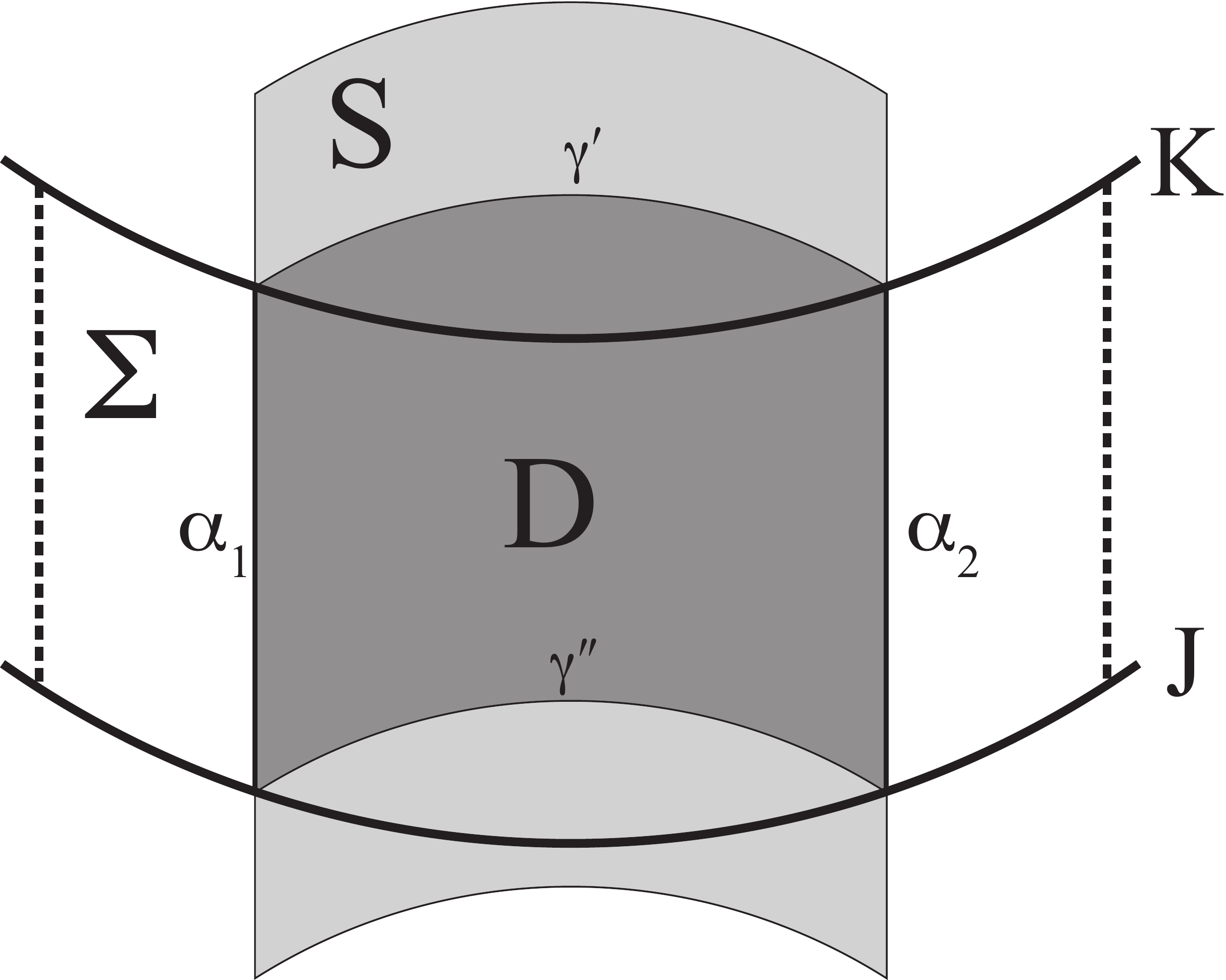}
\caption{Splitting a relative Legendrian connected sum.}\label{splitting}
\end{figure}

Let $M=M_1'\cup M_2'$ with $\partial M_i'=S$, where $S$ is a 2-sphere with appropriate orientation, and complete each $M_i'$ by a standard contact 3-ball $B$. Then $M_i'\cup B$ is diffeomorphic to the original manifold $M_i$ used in the relative Legendrian connected sum. Completing $K$ by $\gamma'$ and $J$ by $\gamma''$ yields knots of type $K_i, J_i\subset M_i$. On the contact level, $\xi\rvert_{M_i'}$ extends uniquely over $B$, and by Colin's theorem, each $M_i'$ has a tight contact structure $\xi_i$. The arcs $\gamma'$ and $\gamma''$ are Legendrian and contain one left-handed half-twist of the contact planes relative to $Fr_S$ each. So the resulting knots are Legendrian and satisfy $\widetilde{tb}(K_1,J_2)+\widetilde{tb}(K_2,J_2)=\widetilde{tb}(K_1\#K_2,J_1\#J_2)$. The diffeomorphism type of the components that $\Sigma$ is split into depends on the arcs $\alpha_i$ and surface $S$. However, knot types of the knots, the diffeomorphism type of the manifolds $M_i$, and the isotopy type of $\xi_i,i=1,2$ are all independent of $\alpha_i$ and $S$.
\end{remark}

\begin{proposition} 
Let $(M,\xi)$be tight and let $(K_1\#K_2,J_1\#J_2)$ be a relative Legendrian connected sum pair with $\widetilde{tb}(K_1\#K_2,J_1\#J_2)=\widetilde{Tb}(K_1\#K_2,J_1\#J_2)$. Then $\widetilde{Tb}(K_1\#K_2,J_1\#J_2)\leq\widetilde{Tb}(K_1,J_2)+\widetilde{Tb}(K_2,J_2)$.
\end{proposition}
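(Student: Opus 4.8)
The plan is to establish the reverse inequality to the preceding proposition by running the argument used in the converse direction of the semi-relative case, with the Legendrian splitting of Lemma \ref{split} replaced by the splitting construction for relative connected sums described in the remark immediately above. The engine is the hypothesis that our given representative is already maximal, $\widetilde{tb}(K_1\#K_2,J_1\#J_2)=\widetilde{Tb}(K_1\#K_2,J_1\#J_2)$: I will split this representative into summand representatives, bound each summand by its own maximal value, and add.

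Concretely, first I would invoke the splitting construction. Since $(K_1\#K_2,J_1\#J_2)$ has the relative connected-sum knot type, there is an embedded convex splitting sphere $S$ meeting $\Sigma$ in the two arcs $\alpha_1\cup\alpha_2$, and cutting along $S$ (completing each side by a standard contact $3$-ball and closing up $K$ by $\gamma'$ and $J$ by $\gamma''$) produces Legendrian pairs $(K_1',J_1')\subset(M_1,\xi_1)$ and $(K_2',J_2')\subset(M_2,\xi_2)$ of the relative knot types of $(K_1,J_1)$ and $(K_2,J_2)$ respectively. Because $\gamma'$ and $\gamma''$ each carry a single left-handed half-twist of the contact planes relative to $Fr_S$, the equal half-twist corrections to $tw_K(\xi,Fr_\Sigma)$ and $tw_J(\xi,Fr_\Sigma)$ cancel in the difference defining $\widetilde{tb}$, exactly as the two $+1$'s cancel in Lemma \ref{reladd}; hence the split representatives satisfy $\widetilde{tb}(K_1',J_1')+\widetilde{tb}(K_2',J_2')=\widetilde{tb}(K_1\#K_2,J_1\#J_2)$. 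Since tightness of the $\xi_i$ makes each $\widetilde{Tb}(K_i,J_i)$ a finite maximum (Lemma \ref{reltb}) and each split representative obeys $\widetilde{tb}(K_i',J_i')\leq\widetilde{Tb}(K_i,J_i)$, chaining these bounds with the maximality hypothesis gives
\[
\widetilde{Tb}(K_1\#K_2,J_1\#J_2)=\widetilde{tb}(K_1',J_1')+\widetilde{tb}(K_2',J_2')\leq\widetilde{Tb}(K_1,J_1)+\widetilde{Tb}(K_2,J_2),
\]
which is the claim.

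The step I expect to carry the real content is the splitting itself: one must know that an arbitrary maximal representative of the relative connected-sum type can actually be cut by a convex sphere realizing the decomposition, and that the two resulting pieces are genuinely representatives of the relative knot types $(K_i,J_i)$ in $(M_i,\xi_i)$ rather than uncontrolled stabilizations. This is where convex surface theory enters — isotoping $S$ to be convex, using Giroux's Criterion in the tight $S^2$ case to force $\Gamma_S$ to be a single curve, Legendrian-realizing $\gamma'$ and $\gamma''$ so that each records exactly one half-twist, and verifying (as in the splitting remark) that the resulting knot types, the manifolds $M_i$, and the contact structures $\xi_i$ are independent of the choices of $S$ and of the arcs $\alpha_i$. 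Once the splitting is known to yield honest representatives of the summand types together with the additive $\widetilde{tb}$-identity, the inequality is immediate; the delicacy lies entirely in guaranteeing the split and its naturality, not in the final arithmetic.
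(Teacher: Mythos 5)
Your argument is correct and is essentially the paper's own: the paper gives no separate proof for this proposition because it follows immediately from the preceding remark on splitting a relative Legendrian connected sum, which is exactly the construction you invoke (convex splitting sphere meeting $\Sigma$ in $\alpha_1\cup\alpha_2$, completion by $\gamma'$ and $\gamma''$ whose equal half-twist contributions cancel in the difference defining $\widetilde{tb}$, yielding the additive identity), after which maximality of the given representative plus the upper bounds from tightness give the inequality by the same chaining you perform.
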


\begin{theorem} 
In the relative Legendrian connected sum of homologous Legendrian knot pairs $(K_i,J_i)$ in tight contact 3-manifolds, $\widetilde{Tb}(K_1\#K_2,J_1\#J_2)=\widetilde{Tb}(K_1,J_2)+\widetilde{Tb}(K_2,J_2)$.
\end{theorem}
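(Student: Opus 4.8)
The plan is to obtain the stated equality by squeezing it between the two inequalities established in the two propositions immediately preceding this theorem: the first gives $\widetilde{Tb}(K_1\#K_2,J_1\#J_2)\geq\widetilde{Tb}(K_1,J_1)+\widetilde{Tb}(K_2,J_2)$ whenever the factors are represented by maximizers, and the second gives the reverse inequality whenever the connected sum is represented by a maximizer. The only thing one must check to invoke both simultaneously is that these maximizing representatives exist, i.e.\ that the relative Thurston--Bennequin invariant is actually attained. This is exactly what Lemma~\ref{reltb} provides: in a tight contact manifold $\widetilde{tb}$ is bounded above, and since it takes values in a discrete set the supremum $\widetilde{Tb}$ is a genuine maximum realized by some Legendrian pair.

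For the lower bound, I would first choose, for each $i=1,2$, a Legendrian pair $(K_i,J_i)$ in $(M_i,\xi_i)$ with $\widetilde{tb}_{\Sigma_i}(K_i,J_i)=\widetilde{Tb}(K_i,J_i)$, which is possible by the boundedness above. Performing the relative Legendrian connected sum of Remark~\ref{relsum} and applying the additivity of Lemma~\ref{reladd} yields $\widetilde{tb}(K_1\#K_2,J_1\#J_2)=\widetilde{tb}_{\Sigma_1}(K_1,J_1)+\widetilde{tb}_{\Sigma_2}(K_2,J_2)=\widetilde{Tb}(K_1,J_1)+\widetilde{Tb}(K_2,J_2)$. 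Since the left-hand side is the relative Thurston--Bennequin invariant of one particular Legendrian representative of the summed pair, it is at most the maximum $\widetilde{Tb}(K_1\#K_2,J_1\#J_2)$, giving the $\geq$ direction. Here the well-definedness corollary for $\widetilde{tb}_{\Sigma_1\#\Sigma_2}$ under the choices of arcs $\alpha_i$, balls $B_i$, and gluing $f$ guarantees that this lower bound does not depend on the geometric data of the construction.

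For the upper bound, I would instead start from a Legendrian representative of $(K_1\#K_2,J_1\#J_2)$ realizing $\widetilde{tb}(K_1\#K_2,J_1\#J_2)=\widetilde{Tb}(K_1\#K_2,J_1\#J_2)$, again using boundedness, and then apply the splitting construction of the ``splitting a relative Legendrian connected sum'' remark to produce pairs $(K_i,J_i)\subset(M_i,\xi_i)$ with $\widetilde{tb}(K_1\#K_2,J_1\#J_2)=\widetilde{tb}_{\Sigma_1}(K_1,J_1)+\widetilde{tb}_{\Sigma_2}(K_2,J_2)$; this is the additivity statement recorded at the end of that remark (equivalently Lemma~\ref{reladd} read backwards across the splitting sphere). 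Bounding each split term by its maximum, $\widetilde{tb}_{\Sigma_i}(K_i,J_i)\leq\widetilde{Tb}(K_i,J_i)$, then forces $\widetilde{Tb}(K_1\#K_2,J_1\#J_2)\leq\widetilde{Tb}(K_1,J_1)+\widetilde{Tb}(K_2,J_2)$. Combining the two inequalities yields the equality.

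I expect the genuine content to sit entirely in the two propositions and, beneath them, in Lemma~\ref{sum}/Lemma~\ref{split} (the $+1$ correction term and its cancellation in Lemma~\ref{reladd}) and in the splitting remark; the theorem itself is the formal squeeze. Accordingly the main obstacle to guard against is purely bookkeeping: one must confirm that the hypothesis of each proposition—that some representative attains the maximum in the relevant factor—can be met, which is precisely the role of the boundedness Lemma~\ref{reltb}, and that the split pieces emerging from a maximizing sum are honest Legendrian representatives of the prescribed relative knot types (so that their $\widetilde{tb}$ values are legitimately dominated by the corresponding $\widetilde{Tb}$), which is exactly what the Legendrian-realization step in the splitting construction secures.
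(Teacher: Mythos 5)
Your proposal is correct and takes essentially the same route the paper intends: the theorem is the squeeze of the two preceding propositions (additivity of $\widetilde{tb}$ under the relative Legendrian connected sum via Lemma~\ref{reladd} for the lower bound, and the splitting remark for the upper bound), with boundedness above from Lemma~\ref{reltb} plus integrality guaranteeing that $\widetilde{Tb}$ is attained so both hypotheses can be met. The paper leaves this combination implicit, and your write-up supplies exactly the bookkeeping it omits.
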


\begin{remark}\label{addrot} (Additivity of the relative rotation number) 
in some cases, from a global trivialization of the contact structure. The rotation number is measured as the difference of the upward and downward cusps,  so $r(K_1\#K_2)=r(K_1)+r(K_2)$ in $(S^3,\xi_{std})$. By construction, the relative rotation numbers below are well-defined and independent of the choices made in the corresponding connected sum. In the relative Legendrian connected sum, the arcs $\alpha_1$ and $\alpha_2$ contribute the same amount to the rotation number. Since they get sent to one upward and one downward cusp under the (local) contactomorphism to $(S^3,\xi_{std})$, their total contribution to the rotation number is 0. Alternatively, note that the orientation-reversing diffeomorphism used in forming all of the connected sums identifies arcs along the summand knots with the same (and opposite in sign) contributions to the respective rotation numbers.
\end{remark}

\begin{proposition} 
Given homologous Legendrian knots $K_1,J\subset(M_1,\xi_1)$ with $K_1\cup J=\partial S_1$ and a Legendrian knot $K_2\subset(M_2,\xi_2)$ with a Seifert surface $K_2=\partial\Sigma_2$, assume that the $\xi_i$ are tight. Then $\widetilde{r}_{\Sigma}(K_1\#K_2,J)=\widetilde{r}_{\Sigma_1}(K_1,J)+r_{\Sigma_2}(K_2)$, where $J$ in the term $\widetilde{r}_{\Sigma}(K_1\#K_2,J)$ is a knot in $M_1\#M_2$.
\end{proposition}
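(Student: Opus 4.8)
The plan is to compute $\widetilde{r}_\Sigma(K_1\#K_2,J)$ from its winding-number definition and reduce it to the two summand invariants by choosing a trivialization adapted to the connect sum. First I would recall the Legendrian connect sum of Remark \ref{legsum}: the operation joins a point of $K_1$ to a point of $K_2$ inside a standard contact ball modeled on the cusp-joining in the front projection of $(S^3,\xi_{std})$, and the Seifert surface for $(K_1\#K_2)\cup J$ is assembled as $\Sigma=(\Sigma_1\setminus D_1)\cup_f(\Sigma_2\setminus D_2)$, where $\Sigma_1$ is the surface $S_1$ with $\partial\Sigma_1=K_1\cup J$ and $D_i$ is a small half-disc removed near the connect-sum arc. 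Since the connect sum is performed away from $J$, the knot $J$ sits unchanged on the $\Sigma_1$-side of $\Sigma$.

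Next I would fix a trivialization $\sigma$ of $\xi\rvert_\Sigma$ subordinate to this decomposition, chosen to agree up to homotopy with the trivialization $\sigma_1$ defining $\widetilde{r}_{\Sigma_1}(K_1,J)$ over $\Sigma_1\setminus D_1$, with the trivialization $\sigma_2$ defining $r_{\Sigma_2}(K_2)$ over $\Sigma_2\setminus D_2$, and with the standard trivialization of $\xi_{std}$ over the connect-sum ball. This is possible because $\xi$ is trivial over each piece and the gluing locus is a disc, so there is no monodromy obstruction to matching the trivializations across the cut. With this choice $w_\sigma(v_J)=w_{\sigma_1}(v_J)$, since $J$ together with a collar lies in the $\Sigma_1$-part, where $\sigma=\sigma_1$.

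The heart of the argument is the additivity of the tangent winding, $w_\sigma(v_{K_1\#K_2})=w_{\sigma_1}(v_{K_1})+w_{\sigma_2}(v_{K_2})$. This is the surface-trivialization refinement of the classical additivity $r(K_1\#K_2)=r(K_1)+r(K_2)$ recalled in Remark \ref{addrot}: in the standard contact ball the two connecting arcs appear as one upward and one downward cusp whose signed contributions to the winding cancel, so the winding against $\sigma$ simply adds across the two sides. Putting these identities together gives
$$\widetilde{r}_\Sigma(K_1\#K_2,J)=w_\sigma(v_{K_1\#K_2})-w_\sigma(v_J)=\big(w_{\sigma_1}(v_{K_1})-w_{\sigma_1}(v_J)\big)+w_{\sigma_2}(v_{K_2})=\widetilde{r}_{\Sigma_1}(K_1,J)+r_{\Sigma_2}(K_2).$$

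I expect the main obstacle to be the bookkeeping in the gluing region: one must verify both that $\sigma$ can be matched to $\sigma_1$ and $\sigma_2$ without a spurious winding correction and that the two connect-sum arcs contribute a net zero to the winding. Both points are controlled by the locality of the construction inside the standard contact ball, where the connect sum is contactomorphic to the front-projection model of $(S^3,\xi_{std})$; there the additivity of Remark \ref{addrot} applies verbatim, and contractibility of the gluing disc rules out monodromy in the trivialization. That the removed half-discs $D_i$ and the various choices do not affect the answer then follows from the well-definedness of the relative invariant already established for connected sums.
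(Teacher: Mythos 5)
Your proposal is correct and matches the paper's own route: the paper offers no separate proof of this proposition but derives it from Remark \ref{addrot}, where the two connecting arcs are identified with one upward and one downward cusp under the local contactomorphism to $(S^3,\xi_{std})$, so their net contribution to the rotation number vanishes and the windings add, exactly as in your key step. Your explicit bookkeeping with a trivialization $\sigma$ matched to $\sigma_1$ and $\sigma_2$ across the contractible gluing disc simply makes precise what the paper leaves implicit.
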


\begin{theorem} 
For a homologous Legendrian knot pair $(K_i,J_i)$ in a tight contact 3-manifold $(M_i,\xi_i)$ with Seifert surface $\Sigma_i, i=1,2$, assume $(M_i,\xi_i)$ is tight. Then $\widetilde{r}_{\Sigma}(K_1\#K_2,J_1\cup J_2)=\widetilde{r}_{\Sigma_1}(K_1,J_1)+\widetilde{r}_{\Sigma_2}(K_2,J_2)$, where $J_1\cup J_2$ in the term $\widetilde{r}_{\Sigma}(K_1\#K_2,J_1\cup J_2)$ is a knot in $M_1\#M_2$.
\end{theorem}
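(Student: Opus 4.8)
The plan is to mirror the proof of Lemma \ref{reladd} for the Thurston--Bennequin case, but using the characterization of the relative rotation number as a difference of winding numbers, $\widetilde{r}_\Sigma(K,J)=w_\sigma(v_K)-w_\sigma(v_J)$. First I would unwind this definition for the connected-sum knots $K_1\#K_2$ and $J_1\#J_2$ sitting on the glued surface $\Sigma=(\Sigma_1\setminus D_1)\cup_f(\Sigma_2\setminus D_2)$, writing
$$\widetilde{r}_\Sigma(K_1\#K_2,J_1\#J_2)=w_\sigma(v_{K_1\#K_2})-w_\sigma(v_{J_1\#J_2}).$$
Here $J_1\#J_2$ is exactly the single knot denoted $J_1\cup J_2$ in the statement, i.e.\ the relative-connected-sum image of $J_1$ and $J_2$ in $M_1\#M_2$.

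The key step is to establish additivity of the individual winding numbers, namely $w_\sigma(v_{K_1\#K_2})=w_{\sigma_1}(v_{K_1})+w_{\sigma_2}(v_{K_2})$ together with the analogous identity for $J$. For this I would invoke the mechanism of Remark \ref{addrot}: the two connecting arcs produced by the relative Legendrian connected sum are carried to one upward and one downward cusp by the local contactomorphism to $(S^3,\xi_{std})$, so their net contribution to the winding number is zero. Unlike the $\widetilde{tb}$ computation in Lemma \ref{reladd}, there is therefore no $+1$ correction term, and the tangent vector field along $K_1\#K_2$ contributes exactly $w_{\sigma_1}(v_{K_1})+w_{\sigma_2}(v_{K_2})$, with the same statement for $J_1\#J_2$. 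Substituting and regrouping the $K_1,J_1$ terms against the $K_2,J_2$ terms immediately yields
$$\widetilde{r}_\Sigma(K_1\#K_2,J_1\#J_2)=\widetilde{r}_{\Sigma_1}(K_1,J_1)+\widetilde{r}_{\Sigma_2}(K_2,J_2).$$

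The main obstacle, as in the additivity arguments earlier in this section, is making the additivity of winding numbers rigorous at the gluing locus: one must verify that the trivialization $\sigma$ of $\xi\rvert_\Sigma$ restricts, up to homotopy, to the trivializations $\sigma_i$ of $\xi\rvert_{\Sigma_i}$ away from the discs $D_i$. This is where I would lean on the construction of the relative connected sum, in which the gluing is performed by an orientation-reversing contactomorphism that identifies the contact framings, and hence the trivializations of $\xi$ over the two surface pieces, along $\partial D_1\cong\partial D_2$. Equivalently, one can phrase the entire argument through the relative Euler-class formula $\widetilde{r}_\Sigma(K,J)=e(\xi,v_K\cup v_J)([\Sigma])$ and appeal to additivity of the relative Euler number under gluing of surfaces along boundary circles on which the chosen sections agree; the cusp-cancellation observation of Remark \ref{addrot} is then precisely the statement that no zeros of the extended section are created in the gluing region, so the signed zero counts over $\Sigma_1\setminus D_1$ and $\Sigma_2\setminus D_2$ simply add.
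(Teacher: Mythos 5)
Your proposal is correct and matches the paper's (implicit) argument: the theorem is stated there without a separate proof, its justification being exactly the cusp-cancellation mechanism of Remark \ref{addrot} that you expand, together with the winding-number bookkeeping built into the construction of Remark \ref{relsum}. One notational caveat: the paper's $J_1\cup J_2$ arguably denotes the two-component reference link (as in the preceding $\widetilde{tb}$ proposition, where the connected sum is performed on the $K_i$ only and a $+1$ appears) rather than your $J_1\#J_2$, but since the rotation number acquires no correction term from the summing arcs, the computation is identical under either reading.
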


\begin{remark} Combined with the classical additivity of the self-linking number under transverse connected sums, the constructions in this section (in particular, Remarks \ref{contactsum}, \ref{legsum}, and \ref{relsum}, and Lemmas \ref{sum}, \ref{split}) carry over to the transverse category to give the construction of relative transverse connected sums and additivity of the relative self-linking number under those. In particular, the generalized Bennequin inequality for a tight contact 3-manifold, implies that we would get an additivity of the maximal self-linking numbers of transverse knot pairs under relative transverse connect sums.
\end{remark}

\section{Relative Thurston-Bennequin invariant in $(S^1\times D^2,\xi_n)$}

Consider $S^1\times D^2$ with $\xi_n=\ker\big{(}\sin(2\pi nz)dx+\cos(2\pi nz)dy\big{)},n\geq 1$, in local coordinates $\{z,\{x,y\}\}$, where $\xi_n$ is generated by $\{ \partial/\partial z,\ \cos(2\pi nz)\partial/\partial x-\sin(2\pi nz) \partial/\partial y\}$. The core curve $J=S^1\times\{(0,0)\}$ is Legendrian and, in fact, all curves of the form $\{(z,(x,y))\ \rvert\ x^2+y^2<1,\ z\in S^1\}$ are Legendrian. Intuitively, $S^1\times D^2$ is foliated by Legendrian curves parallel to the core $J$. The contact structure is invariant under translation in the plane $\{z=const.\}$, however, it is not vertically invariant, in particular, it makes $n$ left-handed $2\pi$-twists along each (oriented) Legendrian curve parallel to the core $J$.

Since $S^1\times D^2\setminus J$ retracts to $\partial(S^1\times D^2)\cong S^1\times S^1$, $H_1(S^1\times D^2\setminus J)\cong\Z\oplus \Z$ is generated by oriented $\{\mu,\lambda \}$, where $\lambda=S^1\times\{p\},\ p\in\partial D^2$, and $\mu\subset (S^1\times D^2\setminus J)$ is the boundary of $\{0\}\times D^2$ with $\lambda\cdot\mu=1$. Then for $K\subset (S^1\times D^2\setminus J)$, $[K]=n\mu+m\lambda\in H_1(S^1\times D^2\setminus J),\ n, m\in\Z$. Call $n$ in $[K]=n\mu+m\lambda$ \textit{the linking number $lk(K,J)$ of $K$with $J$}. Alternatively, this is the geometric intersection of $K$ with the annulus $A=S^1\times[(0,0),(1,0)]$ so $lk(K,J)=K\cdot A$.

Consider a Legendrian knot $K$ homologous to $J$ in $(S^1\times D^2,\xi_n)$, $K\cup J=\partial\Sigma$. Denote the \textit{Thurston-Bennequin invariant of $K$ relative to $J$} by $\widetilde{tb}_{n,\Sigma}(K,J):=tw_K(\xi,Fr_{\Sigma})-tw_{J}(\xi,Fr_{\Sigma})$. It is well-defined (with $H_2(S^1\times D^2)=0$ implying it is independent of the Seifert surface $\Sigma$) and depends on the integer  $n$. So for a knot $K$ homologous to the core $J$ and $\partial \Sigma=K\cup J$ in $(S^1\times D^2,\xi_n)$, we will omit the subscript $\Sigma$, and use the notation $\widetilde{tb}_n(K,J)=tw_K(\xi,Fr_{\Sigma})-tw_{J}(\xi,Fr_{\Sigma})$. We want to study Legendrian isotopies of $K$ across $J$.

\begin{lemma} 
Fix a number $r\in(0,1)$ and let $[p,q$ denote the line segment in $D^2\subset \R^2$ from point $p$ to point $q$. There exists an annulus $A=S^1\times [(0,0), (a,b)]$ with $a^2+b^2<r$, such that $K\subset S^1\times\{(x,y): x^2+y^2\leq a^2+b^2\}$ and $K\pitchfork A$.
\end{lemma}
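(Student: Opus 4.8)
The plan is to first use a contact isotopy to shrink $K$ into an arbitrarily thin solid-torus neighborhood of the core, and then to locate a radial annulus meeting $K$ transversally by a Sard-type genericity argument applied to the angular coordinate along $K$. Since $r\in(0,1)$ may be very small while an arbitrary Legendrian $K$ homologous to $J$ a priori reaches close to $\partial D^2$, the shrinking step is essential: it replaces $K$ by a Legendrian-isotopic copy lying inside $S^1\times\{x^2+y^2\le\rho^2\}$ with $\rho^2<r$, and it is here that we use that the foliation by core-parallel Legendrians is compatible with radial scaling.

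For the shrinking, I would consider the radial vector field $X=x\,\partial/\partial x+y\,\partial/\partial y$ on $S^1\times D^2$. Its time-$t$ flow is $\phi_t(z,x,y)=(z,e^tx,e^ty)$, and since the $z$-coordinate is untouched one computes $\phi_t^*\alpha_n=e^t\alpha_n$, where $\alpha_n=\sin(2\pi nz)\,dx+\cos(2\pi nz)\,dy$; equivalently $\mathcal{L}_X\alpha_n=\alpha_n$. Hence $X$ is a contact vector field and each $\phi_t$ preserves $\xi_n=\ker\alpha_n$, so flowing $K$ inward (negative time) is a Legendrian isotopy carrying $K$ into any prescribed neighborhood of the core. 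Because $K$ and $J$ are distinct boundary components of $\Sigma$ they are disjoint, so the shrunk $K$ is compact and stays at radius bounded below by some $\delta>0$; I fix the amount of shrinking so that $K\subset S^1\times\{\delta^2\le x^2+y^2\le\rho^2\}$ with $\rho^2<r$.

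Next, choose $\rho'$ with $\rho<\rho'$ and $\rho'^2<r$, and consider the angular coordinate along $K$. Since $K$ avoids the core, the map $\Theta\colon K\to S^1$, $\Theta(p)=\arg\!\big(x(p)+i\,y(p)\big)$, is well-defined and smooth, so by Sard's theorem almost every $\theta_0\in S^1$ is a regular value. Pick such a $\theta_0$ and set $(a,b)=\rho'(\cos\theta_0,\sin\theta_0)$, so that $a^2+b^2=\rho'^2<r$ and $K\subset S^1\times\{x^2+y^2\le a^2+b^2\}$ (indeed strictly). Let $A=S^1\times[(0,0),(a,b)]$ be the corresponding radial annulus. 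A point of $A$ has the form $(z,s a,s b)$ with $s\in[0,1]$, so every point of $A$ off the core has angular coordinate exactly $\theta_0$; consequently $K\cap A$ is precisely the set of points of $K$ with $\Theta=\theta_0$, all at radius in $[\delta,\rho]\subset(0,\rho')$, i.e. in the interior of the annulus. At such a point $p$ the plane $T_pA$ is spanned by $\partial/\partial z$ and the radial direction $\cos\theta_0\,\partial/\partial x+\sin\theta_0\,\partial/\partial y$, which have no angular component, while regularity of $\theta_0$ forces $T_pK$ to have nonzero angular component; hence $T_pK+T_pA=T_p(S^1\times D^2)$ and $K\pitchfork A$.

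The one genuine step is the transversality, and I expect it to be handled cleanly once one observes that $K$ is bounded away from the core: this makes the angle map $\Theta$ smooth, so that the radial annulus cut out by a regular value is automatically transverse to $K$. The remaining input, that radial scaling is a contact isotopy of $(S^1\times D^2,\xi_n)$, is the routine computation $\phi_t^*\alpha_n=e^t\alpha_n$ above; the only conceptual point is recognizing that we are permitted to replace $K$ by this shrunk Legendrian-isotopic representative before choosing $A$.
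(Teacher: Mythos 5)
Your proof is correct, and its core is the same as the paper's: both apply Sard's theorem to the angular map $\Theta\colon K\to S^1$ (well-defined because $K$ avoids the core) and take $A$ to be the radial annulus at a regular angle $\theta_0$, where regularity forces the tangent line of $K$ to have nonzero angular component while $T_pA$ is spanned by $\partial/\partial z$ and the radial direction, giving $K\pitchfork A$. The genuine difference is your preliminary shrinking step, and it repairs a real gap in the paper's argument: the paper simply asserts that $K$ ``is contained in a solid torus of the type $S^1\times\{(x,y)\,\rvert\, x^2+y^2\leq r'\}$ for some $r'\in(0,1)$, $r'<r$,'' which is false for the given embedding of $K$ when $r$ is small and $K$ reaches near $\partial D^2$. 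Your observation that the radial field $X=x\,\partial/\partial x+y\,\partial/\partial y$ satisfies $\phi_t^*\alpha_n=e^t\alpha_n$ (so $\mathcal{L}_X\alpha_n=\alpha_n$ and the flow preserves $\xi_n$) legitimately replaces $K$ by a Legendrian-isotopic copy inside any prescribed radius, with the core fixed so the shrunk $K$ stays a positive distance $\delta$ from $J$. Strictly speaking this proves the lemma for an isotoped representative rather than for $K$ as literally quantified, but that is exactly how the lemma is consumed downstream (Proposition \ref{unlink} only asks for a contact isotopy), so your reading is the defensible one. You also silently correct the paper's typo describing $A$ as the solid torus $S^1\times\{x^2+y^2\leq r\}$, and you make explicit the two points the paper leaves implicit: that the intersections lie in the interior of $A$ (radius in $[\delta,\rho]$ with $\rho<\rho'$), and the dimension count behind transversality.
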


\begin{proof} 
Since $K$ is properly embedded in $S^1\times D^2$, it is contained in a solid torus of the type $S^1\times \{(x,y)\rvert x^2+y^2\leq r'\}$ for some $r'\in(0,1),r'<r$. Parametrize $K$ by $t\longmapsto \{z(t),x(t),y(t)\}$, where $t\in S^1$ and consider the map $f:K\longrightarrow S^1$ given by $f:\{z(t),x(t),y(t)\}\longmapsto \theta\in S^1$ such that $x=\cos{\theta}$ and $y=\sin{\theta}$, in other words, $\theta$ is the angle that the segment $[(0,0),(x(t),y(t))]$ makes with the $x$-axis. So $f:S^1\rightarrow S^1$ is a smooth map and by Sard's theorem, almost every value of the map is a regular value, that is, the differential of $f$ is onto everywhere. Then $f^{-1}(\theta)$ for a given regular value $\theta$ produces a set of transverse intersection points of $K$ with the annulus $A=S^1\times\{(x,y):x^2+y^2\leq r\}$. 
\end{proof}

\begin{lemma} 
The annulus $A$ traces out a Legendrian isotopy from $J=S^1\times\{(0,0)\}$ to $J'=S^1\times\{(c,d)\}$, $c^2+d^2=r$, with $lk(J',K)=0$ and $A\pitchfork K$. It extends to an ambient contact isotopy of $(S^1\times D^2,\xi_n)$ fixing the boundary. 
\end{lemma}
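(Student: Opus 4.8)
The plan is to build the isotopy explicitly from the radial family of core‑parallel curves and then upgrade it to an ambient contact isotopy using the translation invariance of $\xi_n$. Write $(c,d)$ with $c^2+d^2=r$ for the outer endpoint of the radial arc carrying $A$, chosen as in the previous lemma so that the angular coordinate $\theta$ of $(c,d)$ is a regular value of the angle map $f\colon K\to S^1$; this is exactly what yields $A\pitchfork K$, since the intersections of $K$ with the half‑plane at angle $\theta$ are then transverse and, as $K$ lies at radius $<\sqrt r$, all occur in the interior of $A$. Set $J_s:=S^1\times\{s(c,d)\}$ for $s\in[0,1]$, so that $J_0=J$, $J_1=J'$, and $\bigcup_s J_s=A$. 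Each $J_s$ is tangent to $\partial/\partial z$, and since $\alpha(\partial/\partial z)=\sin(2\pi nz)\,dx(\partial_z)+\cos(2\pi nz)\,dy(\partial_z)=0$, every $J_s$ is Legendrian. Thus $\{J_s\}$ is a Legendrian isotopy whose trace is $A$, establishing the first assertion.

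For the linking claim, recall from the previous lemma that $K\subset S^1\times\{x^2+y^2\le a^2+b^2\}$ with $a^2+b^2<r$, so $K$ lies in the interior of $V_r:=S^1\times\{x^2+y^2\le r\}$ while $J'\subset\partial V_r$. The radial annulus running from $J'$ outward to $\partial(S^1\times D^2)$ — the annulus that computes the linking number of $K$ with a core‑parallel curve — is then disjoint from $K$, so by the definition of $lk$ as a signed intersection with such a radial annulus, $lk(J',K)=0$. Equivalently, the full radial annulus at angle $\theta$ splits as the inner piece $A$ together with this outer piece, and all intersections of $K$ with it occur on $A$.

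The main work is the extension to an ambient contact isotopy fixing $\partial(S^1\times D^2)$, and here I would exploit that $\xi_n$ is translation invariant in the $(x,y)$–planes. The translation $T_s(z,x,y)=(z,x+sc,y+sd)$ satisfies $T_s^*\alpha=\alpha$, since the coefficients $\sin(2\pi nz),\cos(2\pi nz)$ depend only on the unchanged $z$; hence $T_s$ is a strict contactomorphism with $T_s(J)=J_s$, and its generator $X=c\,\partial_x+d\,\partial_y$ is a contact vector field with contact Hamiltonian $H=\alpha(X)=c\sin(2\pi nz)+d\cos(2\pi nz)$. To respect the boundary I would cut off at the level of the Hamiltonian rather than the vector field: choose $\rho\colon S^1\times D^2\to[0,1]$ with $\rho\equiv1$ on a neighborhood of $V_r$ and $\rho\equiv0$ near $\partial(S^1\times D^2)$, and let $\phi_s$ be the contact isotopy generated by the contact Hamiltonian $\rho H$. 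Then $\phi_s$ is supported in $\{\rho\neq0\}\subset\mathrm{int}(S^1\times D^2)$, so it fixes the boundary; and where $\rho\equiv1$ one has $\rho H=H$ and $d(\rho H)=dH$, so the generating field there agrees with $X$. Since $J_s\subset V_r$ for all $s$, the flow keeps $J$ inside the locus where it coincides with $T_s$, whence $\phi_s(J)=J_s$ and $\phi_1(J)=J'$.

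The step I expect to be the main obstacle is precisely this rel‑boundary extension: one must cut off the Hamiltonian rather than the vector field (the latter would destroy the contact condition) and then be sure that the resulting flow still tracks the translation on $A$, i.e. that the trace $A$ never leaves the region $\{\rho\equiv1\}$. Choosing $\rho\equiv1$ on an honest neighborhood of $V_r$ and noting that $J_s$ stays at radius $s\sqrt r\le\sqrt r$ settles this. As an abstract cross‑check one can instead invoke the Legendrian isotopy extension theorem, whose ambient contact isotopy may be taken supported in any neighborhood of $\bigcup_s J_s=A$; since $A$ is disjoint from $\partial(S^1\times D^2)$, such a neighborhood avoids the boundary, recovering the rel‑boundary statement.
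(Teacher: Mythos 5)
Your proof is correct, and its first two steps are the same as the paper's: the paper also obtains the Legendrian isotopy from the foliation of $A$ by the parallel Legendrian circles $S^1\times\{(x,y)\}$, and it also proves $lk(J',K)=0$ by exhibiting an annulus $A'$ from $J'$ out to the boundary curve $S^1\times\{(1,0)\}$ with $K\cap A'=\emptyset$ --- essentially the outer radial annulus you describe. Where you genuinely diverge is the extension step, which the paper handles in one sentence: it simply asserts that the Legendrian isotopy extends to an ambient contact isotopy that can be arranged to be the identity on the boundary, implicitly invoking the Legendrian isotopy extension theorem (your ``abstract cross-check''). You instead build the extension explicitly from the strict contactomorphisms $T_s(z,x,y)=(z,x+sc,y+sd)$, correctly cutting off at the level of the contact Hamiltonian $H=\alpha(X)=c\sin(2\pi nz)+d\cos(2\pi nz)$ rather than the vector field (which would indeed destroy the contact condition), and then verifying, by uniqueness of flow lines, that the truncated flow still tracks the translation because the whole trace $\bigcup_s J_s=A$ stays at radius at most $\sqrt{r}<1$, inside the region where $\rho\equiv 1$ and hence $X_{\rho H}=X$. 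What your route buys is a self-contained, quantitative proof exploiting the translation invariance of $\xi_n$, precisely at the point the paper leaves to the reader; what the paper's route buys is brevity and generality, since the appeal to isotopy extension (supported in any neighborhood of $A$, which avoids $\partial(S^1\times D^2)$) does not depend on the special symmetry of the model $(S^1\times D^2,\xi_n)$ and is the argument that recurs for the less symmetric isotopies later in the paper.
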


\begin{proof} 
The annulus $A$ is foliated by Legendrian knots of type $S^1\times\{(x,y)\}$, all parallel copies of the core $J=S^1\times\{(0,0)\}$, so it traces out a Legendrian isotopy between $J=S^1\times\{(0,0)\}$ and $J'=S^1\times\{(c,d)\}$. Since $J'\subset S^1\times\{(x,y):x^2+y^2\leq r'<r\}$, $J'$ and $K$ are unlinked, $J'$ co-bounds an annulus $A'$ with $S^1\times\{(1,0)\}$ and $K\cap A'=\emptyset$. This Legendrian isotopy extends to an ambient contact isotopy of $(S^1\times D^2,\xi_n)$ and can be arranged to be the identity on the boundary. 
\end{proof}

\begin{lemma}\label{unlinkonce}
The inverse of the isotopy traced out by $A$ is an ambient contact isotopy sending $J'$ to $J$ such that the image of $K$ is a Legendrian knot that crosses $J$ transversely to become unlinked from $J$.
\end{lemma}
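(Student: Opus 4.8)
The plan is to derive the whole statement from the ambient contact isotopy of the previous lemma, reducing it to an invariance-of-linking computation together with a single transversality observation. First I would fix notation: let $\Phi_t$, $t\in[0,1]$, be the ambient contact isotopy of $(S^1\times D^2,\xi_n)$ produced there, normalized so that $\Phi_0=\mathrm{id}$, $\Phi_1(J)=J'$, and each $\Phi_t$ fixes the boundary. Each $\Phi_t$ preserves $\xi_n$, so the inverses $\Phi_t^{-1}$ do as well; hence the time-reversed family is again an ambient contact isotopy, and by construction it carries $J'=\Phi_1(J)$ back to the core $J$. This settles the first two assertions (that it is an ambient contact isotopy and that it sends $J'$ to $J$) with essentially no work.

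Next I would track the image $\widetilde K:=\Phi_1^{-1}(K)$. Since $\Phi_1$ is a contactomorphism and $K$ is Legendrian, $\widetilde K$ is Legendrian, which is exactly the claim that the image of $K$ is a Legendrian knot. To see that $\widetilde K$ is unlinked from $J$, I would use invariance of the linking number under ambient isotopy, $lk(\widetilde K,J)=lk(\Phi_1^{-1}(K),\Phi_1^{-1}(J'))=lk(K,J')$, together with the previous lemma, which gives $lk(K,J')=0$. Thus $\widetilde K$ is unlinked from the core.

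The one genuinely geometric point, which I expect to be the main obstacle, is that $K$ crosses $J$ transversely along the way. Here I would pass the statement back through $\Phi_t$: the moving knot $\Phi_t^{-1}(K)$ meets the fixed core $J$ at a time $t$ precisely when the moving curve $\Phi_t(J)$ meets the fixed $K$ at that time. As $t$ runs over $[0,1]$ the curves $\Phi_t(J)$ run through the Legendrian leaves $S^1\times\{(x,y)\}$ foliating $A$, so every such meeting occurs at a point of $K\cap A$; since the previous lemma arranged $A\pitchfork K$, each crossing is transverse. Consequently $\Phi_t^{-1}(K)$ crosses $J$ transversely, and the signed number of crossings equals the oriented intersection $K\cdot A=lk(K,J)$, which is exactly the amount by which the linking drops in passing from $K$ to the unlinked $\widetilde K$.

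The care needed is entirely in that last translation between ``moving $K$ against a fixed $J$'' and ``moving $J$ against a fixed $K$'': transversality of the crossings does not hold for an arbitrary isotopy and must be read off from the radial product structure of $A$ and the hypothesis $A\pitchfork K$. If isolating the crossing times requires it, I would first put $K\cap A$ in general position by a $C^\infty$-small perturbation of $A$ supported away from $\partial A$ and carried out through the Legendrian foliation, so that the extended ambient isotopy remains a contact isotopy.
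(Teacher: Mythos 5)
Your argument is correct and takes essentially the same route the paper intends: the paper states Lemma \ref{unlinkonce} with no separate proof, treating it as immediate from the two preceding lemmas, and your writeup (reversing the ambient contact isotopy, invariance of $lk$ under ambient isotopy giving $lk(\Phi_1^{-1}(K),J)=lk(K,J')=0$, and reading transversality of the crossings off the trace of the isotopy) simply makes that implicit reasoning explicit. In particular, your key observation --- that $\Phi_t^{-1}(K)$ meets $J$ precisely when $K$ meets the leaf $\Phi_t(J)\subset A$, so that the trace of the moving curve is exactly $A$ and the hypothesis $A\pitchfork K$ is precisely transversality of the crossings --- is exactly the point the construction of $A$ in the first lemma was designed to deliver.
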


Lemma \ref{unlinkonce} says that for homologous $K$ and $J$, Legendrian isotoping $K$ across $J$ does not change the value of $\widetilde{tb}_\Sigma(K,J)$. From the construction, such a Legendrian isotopy always exists. 

\begin{proposition}\label{unlink} 
Given a Legendrian knot $K$ homologous to $J=S^1\times\{(0,0)\}$ in $(S^1\times D^2,\xi_n)$, there exists an ambient contact isotopy, identity on the boundary, from $K$ to $K'$ such that $lk(K',J)=0$ and $\widetilde{tb}_n(K',J)=\widetilde{tb}_n(K,J)$.
\end{proposition}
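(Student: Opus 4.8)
The plan is to produce $K'$ as the image of $K$ under the \emph{inverse} of the ambient contact isotopy built in the two preceding lemmas, and then to verify the two assertions separately. The unlinking $lk(K',J)=0$ is homological and immediate, whereas the invariance $\widetilde{tb}_n(K',J)=\widetilde{tb}_n(K,J)$ is the substantive point and is where I expect the real work to lie.

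First I would invoke the first lemma to fix $r\in(0,1)$ and an annulus $A=S^1\times[(0,0),(c,d)]$, $c^2+d^2=r$, with $K\subset S^1\times\{x^2+y^2\le r'\}$ for some $r'<r$ and $K\pitchfork A$. By the second lemma, $A$ traces an ambient contact isotopy $\Phi_t$ of $(S^1\times D^2,\xi_n)$, identity on $\partial(S^1\times D^2)$, with $\Phi_0=\mathrm{id}$, $\Phi_1(J)=J'=S^1\times\{(c,d)\}$, and $lk(K,J')=0$; the last equality holds because $J'$ lies outside the radius-$r'$ solid torus containing $K$, so $J'$ cobounds with $\partial(S^1\times D^2)$ an annulus disjoint from $K$. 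I then set $K':=\Phi_1^{-1}(K)$ and take as the required isotopy the inverse family $t\mapsto\Phi_t^{-1}$ of Lemma~\ref{unlinkonce}: it is an ambient contact isotopy, fixed on the boundary, carrying $K$ to $K'$ (and carrying $J'$ to $J$).

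For the linking number I would use that $\Phi_1$ is an orientation-preserving diffeomorphism with $J=\Phi_1^{-1}(J')$, so that $lk(K',J)=lk(\Phi_1^{-1}(K),\Phi_1^{-1}(J'))=lk(K,J')=0$. For the relative invariant I would track $\widetilde{tb}_n(\Phi_t^{-1}(K),J)$ as $t$ runs over $[0,1]$, holding the core $J$ fixed. Away from the finitely many parameters at which $\Phi_t^{-1}(K)$ meets $J$, this quantity is defined, and, being an integer that varies continuously (carry a Seifert surface along with the isotopy and compare the contact and Seifert framings), it is locally constant. By Lemma~\ref{unlinkonce} the isotopy makes $\Phi_t^{-1}(K)$ cross $J$ transversely, so the endpoints satisfy $\widetilde{tb}_n(K',J)=\widetilde{tb}_n(K,J)$ \emph{provided} these transverse crossings do not change the invariant.

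That proviso is the main obstacle, since each transverse crossing changes $lk$ by $\pm1$; it is exactly the statement asserted by Lemma~\ref{unlinkonce} and the remark following it. The reason I would give is that $\widetilde{tb}_n$ is a framing comparison rather than a linking count: it is the difference $tw_K-tw_{J}$ of contact twistings relative to the Seifert framings, and since $H_2(S^1\times D^2)=0$ this difference is independent of the chosen Seifert surface (two choices for a fixed pair differ by a null-homologous $2$-cycle, whose algebraic intersection with the contact push-offs $K''$ and $J''$ vanishes). Thus a crossing only alters the homological relation between the two knots, which this surface-independence discards, while the contact framings of $K$ and of the parallel cores are carried rigidly by the ambient contact isotopy; so $tw_K-tw_{J}$ is unaffected. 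Assembling these equalities over the whole isotopy gives $\widetilde{tb}_n(K',J)=\widetilde{tb}_n(K,J)$. Equivalently, one can first invoke that $\Phi_1$ is a contactomorphism to reduce to $\widetilde{tb}_n(K,J')=\widetilde{tb}_n(K,J)$, and then compare the core $J$ with its parallel $J'$ by the same crossing argument, which is the form in which I expect to have to supply the details.
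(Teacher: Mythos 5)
Your assembly is the paper's own: the paper gives no standalone proof of Proposition \ref{unlink}, which is precisely the concatenation of the three preceding lemmas exactly as you arrange them --- the annulus $A$, the ambient contact isotopy $\Phi_t$ it traces, its inverse (Lemma \ref{unlinkonce}) applied to produce $K'=\Phi_1^{-1}(K)$, the homological linking computation, and contactomorphism-invariance reducing everything to the claim that a transverse crossing of the reference knot leaves $\widetilde{tb}_n$ unchanged. You also correctly isolate that last claim as the substantive point; the paper disposes of it only by the remark following Lemma \ref{unlinkonce} and by citing \cite{georgi}, and you even notice the same residual comparison of $J$ with its parallel copy $J'$ that the paper leaves implicit.

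The one step that does not hold as you state it is the justification of crossing-invariance by surface-independence. That $H_2(S^1\times D^2)=0$ makes $\widetilde{tb}_\Sigma$ independent of $\Sigma$ applies to a \emph{fixed} pair $(K,J)$; a crossing changes the link type of the pair, so there is no single boundary link whose two Seifert surfaces are being compared, and the inference that surface-independence ``discards'' the changed homological relation is a non sequitur. The correct mechanism is a cancellation: a transverse crossing changes $lk(K,J)$ by $\pm1$ and thereby shifts the Seifert framings induced by $\Sigma$ on $K$ \emph{and} on $J$ by the same amount (with the convention $[\partial\Sigma]=[K]-[J]$, the induced framing on each boundary component equals $lk(K,J)$), so $tw_K(\xi,Fr_\Sigma)$ and $tw_J(\xi,Fr_\Sigma)$ each change by $\mp1$ and their difference is untouched --- your phrase ``the contact framings are carried rigidly'' conflates this with the contact framing, which is not where the change occurs. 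A clean way to see all of it at once, consistent with the paper's computations in Theorem \ref{tbnbound} and Section 6, is to embed $(S^1\times D^2,\xi_n)$ in $(S^3,\xi_{std})$ as a framed neighborhood of a Legendrian unknot: for any Seifert surface of the oriented link $K\cup(-J)$ in $S^3$ one gets $\widetilde{tb}_\Sigma(K,J)=tb(K)-tb(J)$, which is manifestly unaffected by crossings and also settles the comparison of $J$ with $J'$, since $tb(J)=tb(J')$ for the Legendrian isotopic parallel copies. With that substitution your proof is complete and coincides with the paper's.
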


\begin{lemma} 
After an unlinking Legendrian isotopy of $K$ as in Proposition \ref{unlink}, $tw_J(\xi_n,Fr_{\Sigma'})=n$ and $\widetilde{tb}_n(K)=tw_K(\xi_n,Fr_{\Sigma'})-n$, where $K'\cup J=\partial\Sigma'$.
\end{lemma}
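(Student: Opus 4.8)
The plan is to deduce both statements from two facts: that the unlinking condition $lk(K',J)=0$ forces the Seifert framing $Fr_{\Sigma'}$ to agree with the product (longitudinal) framing of the core $J$, and that the contact planes $\xi_n$ rotate $n$ times along $J$ relative to that framing. Granting the first identity $tw_J(\xi_n,Fr_{\Sigma'})=n$, the second statement is immediate from the definition of the relative invariant: $\widetilde{tb}_n(K',J)=tw_{K'}(\xi_n,Fr_{\Sigma'})-tw_J(\xi_n,Fr_{\Sigma'})=tw_{K'}(\xi_n,Fr_{\Sigma'})-n$, and since the isotopy of Proposition \ref{unlink} is an ambient contact isotopy it preserves the relative invariant, so $\widetilde{tb}_n(K,J)=\widetilde{tb}_n(K',J)$. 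Thus it suffices to prove $tw_J(\xi_n,Fr_{\Sigma'})=n$.

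First I would identify $Fr_{\Sigma'}$ with the product framing of $J$. Push $J$ slightly into $\Sigma'$ to obtain a parallel copy $J''$, and set $\Sigma''=\Sigma'\setminus(\text{collar of }J)$, so that $\Sigma''$ lies in $S^1\times D^2\setminus J$ and $\partial\Sigma''=K'-J''$. Hence $[J'']=[K']$ in $H_1(S^1\times D^2\setminus J)\cong\Z\mu\oplus\Z\lambda$, so the $\mu$-coefficient of $[J'']$ equals that of $[K']$, namely $lk(K',J)=0$. Since the product longitude $\lambda=S^1\times\{p\}$ likewise has vanishing $\mu$-coefficient, the push-off $J''$ is isotopic to the product push-off in the complement of $J$, and therefore $Fr_{\Sigma'}$ is exactly the product framing of the core.

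Next I would compute the contact twisting along $J$ relative to this framing directly from the model. Along the core the contact planes are spanned by $\partial/\partial z$ and the normal vector field $w(z)=\cos(2\pi nz)\,\partial/\partial x-\sin(2\pi nz)\,\partial/\partial y$, and as $z$ traverses $S^1$ the vector $w(z)$ makes $n$ full rotations relative to the constant product framing $\partial/\partial x$. By the description of $tw_J$ as the signed number of $2\pi$-twists of the contact framing relative to $Fr_{\Sigma'}$, together with Step 1, this yields $tw_J(\xi_n,Fr_{\Sigma'})=n$.

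The main obstacle is Step 1: because $J$ is the core of the solid torus and is not null-homologous, one must be careful about what the Seifert framing and the linking number mean, tracking the class of the push-off in $H_1(S^1\times D^2\setminus J)$ rather than an absolute linking number in $S^3$; the unlinking condition is precisely what collapses this to the $0$-framing. The remaining delicate point is the sign bookkeeping of the twist: one must fix the handedness convention (left-handed negative) consistently for $w(z)$ and for the orientation of $J$ induced by $\partial\Sigma'=K'-J$, so that the $n$ rotations of $\xi_n$ register as $+n$ in accordance with the conventions used for $\widetilde{tb}_n$.
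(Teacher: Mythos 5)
Your proof takes essentially the same route as the paper: the paper's argument consists precisely of your Step 1 (once $lk(K',J)=0$, a push-off of $J$ into $\Sigma'$ vanishes in the first homology of the complement, so $Fr_{\Sigma'}$ is the product framing) followed by reading off the twisting of $\xi_n$ along the core from the model, and your version merely makes the homological bookkeeping explicit. The sign tension you flag is real but lies in the paper itself, not in your argument --- elsewhere the paper records the core's twisting relative to the product framing as $-n$ (e.g.\ $tb_n(J)=-n$ in Sections 8 and 9, and the $n$ left-handed twists in Section 5), whereas this lemma writes $+n$, consistent only with the convention of Theorem \ref{tbnbound} where $n<0$.
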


\begin{proof} 
Once an unlinking Legendrian isotopy is applied to $K$, the Seifert surface $\Sigma'$ for $K'\cup J$ induces a Seifert framing $Fr_{\Sigma'}$ on $J$ which is equal to the product framing on $J$ (a push-off $J'$ of $J$ into $\Sigma'$ which defines the framing must vanish in the first homology of the complement). Therefore, $tw_J(\xi_n,Fr_{\Sigma'})=n$.
\end{proof}

Let $\kappa$ be a smooth knot type in $S^1\times D^2$, and let $\mathcal{L}_n(\kappa)$ denote \textit{the set of Legendrian representatives of $\kappa$ homologous to the core $J$ in $(S^1\times D^2,\xi_n)$}.

\begin{lemma} 
The function $\widetilde{tb}_n:\mathcal{L}_n(\kappa)\longrightarrow\Z$ is not bounded below.
\end{lemma}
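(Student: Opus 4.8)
The plan is to show that $\widetilde{tb}_n$ takes arbitrarily negative values on $\mathcal{L}_n(\kappa)$ by exhibiting an operation that decreases it without bound while staying within the same smooth knot type $\kappa$ and the same homology class. The standard tool for this in the classical setting is the \emph{Legendrian stabilization}, which adds a pair of cusps (a zig-zag) in the front projection. I would first recall that stabilization does not change the smooth knot type and decreases the classical Thurston-Bennequin invariant by $1$ for each stabilization. The key point is that stabilization is a local operation supported in a Darboux ball, so it can be applied to $K$ away from $J$ without affecting $J$ or the homology class $[K]=[J]$.

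The main steps would be as follows. First, fix a Legendrian representative $K\in\mathcal{L}_n(\kappa)$ with Seifert surface $\Sigma$ so that $K\cup J=\partial\Sigma$, and recall the definition $\widetilde{tb}_n(K,J)=tw_K(\xi_n,Fr_\Sigma)-tw_J(\xi_n,Fr_\Sigma)$. Second, apply a (negative) stabilization to $K$ in a small Darboux ball disjoint from $J$, producing $K_1$ of the same smooth type $\kappa$ and in the same homology class, so $K_1\in\mathcal{L}_n(\kappa)$ as well. Third, I would argue that stabilization decreases $tw_{K}(\xi_n,Fr_\Sigma)$ by $1$ while leaving $tw_J(\xi_n,Fr_\Sigma)$ unchanged: since the stabilization is supported away from $J$, the term $tw_J$ is untouched, and the local model for stabilization adds exactly one negative half-twist pair, contributing $-1$ to the contact-framing twisting of $K$ relative to the Seifert framing (this is the relative analogue of the classical fact $tb(S_-(K))=tb(K)-1$). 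Hence $\widetilde{tb}_n(K_1,J)=\widetilde{tb}_n(K,J)-1$. Fourth, iterating $k$ negative stabilizations yields $K_k\in\mathcal{L}_n(\kappa)$ with $\widetilde{tb}_n(K_k,J)=\widetilde{tb}_n(K,J)-k$, which goes to $-\infty$ as $k\to\infty$, establishing that $\widetilde{tb}_n$ is not bounded below.

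The step I expect to require the most care is verifying that the relative invariant decreases by exactly $1$ under a single stabilization, rather than being changed by the stabilization in some way that interacts with the ambient twisting of $\xi_n$. The subtlety is that $\xi_n$ is not vertically invariant and makes $n$ left-handed twists along curves parallel to the core, so one must confirm that the local stabilization model embeds in a Darboux ball where $\xi_n$ looks standard, and that the change to the Seifert framing $Fr_\Sigma$ induced by the new Seifert surface for $K_k\cup J$ is accounted for correctly. I would handle this by choosing the stabilization ball disjoint from $J$ and from the annulus $A$ of Lemma \ref{unlinkonce}, so that neither $tw_J$ nor the linking data of $J$ with $K$ is affected; the change in $tw_K$ relative to $Fr_\Sigma$ is then purely local and equals $-1$ by the standard front-projection computation. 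The remaining verification that the smooth knot type and homology class are preserved is routine, since stabilization is a $C^0$-small local modification.
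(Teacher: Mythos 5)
Your proposal is correct and follows essentially the same route as the paper: the paper's proof also stabilizes $K$ repeatedly, noting that each stabilization stays in the knot type $\kappa$ and lowers $\widetilde{tb}_n$ by $1$ (the paper tracks this via the Seifert surface $\Sigma'$ obtained by adding a half-disc, while you track it via the local Darboux-ball model, but these are the same computation). Your extra care about keeping the stabilization away from $J$ so that $tw_J(\xi_n,Fr_\Sigma)$ is unchanged is a correct and slightly more explicit account of what the paper leaves implicit.
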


\begin{proof} Take any Legendrian representative $K\in\kappa$ and stabilize $K$. The resulting knot $K'\in \kappa$ has $\widetilde{tb}_n(K')=\widetilde{tb}_n(K)-1$ with Seifert surface $\Sigma'$ obtained from $\Sigma$ by adding a half-disc (and smoothing corners). $\Sigma$ for $K$ and $\Sigma'$ are smoothly isotopic, and their boundaries are smoothly but not Legendrian isotopic,with $K'\in \kappa$. Repeating this lowers the relative Thurston-Bennequin invariant arbitrarily.
\end{proof}

\begin{theorem}\label{tbnbound}
The function $\widetilde{tb}_n:\mathcal{L}_n(\kappa)\rightarrow\Z$ is bounded above when $n<0$.
\end{theorem}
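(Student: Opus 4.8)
The plan is to bound $\widetilde{tb}_n$ above by reducing to the classical Thurston--Bennequin inequality in $(S^3,\xi_{std})$, after realizing $(S^1\times D^2,\xi_n)$ as a standard Legendrian neighborhood. First, given $K\in\mathcal{L}_n(\kappa)$, I would apply Proposition \ref{unlink} to replace $K$ by an ambient-contact-isotopic knot $K'$ (the isotopy being the identity on the boundary) with $lk(K',J)=0$ and $\widetilde{tb}_n(K',J)=\widetilde{tb}_n(K,J)$. By the lemma immediately preceding the statement, the Seifert surface $\Sigma'$ of $K'\cup J$ then induces the product framing on $J$ and satisfies $tw_J(\xi_n,Fr_{\Sigma'})=n$, so that $\widetilde{tb}_n(K',J)=tw_{K'}(\xi_n,Fr_{\Sigma'})-n$. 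It therefore suffices to bound $tw_{K'}(\xi_n,Fr_{\Sigma'})$ above uniformly over $\mathcal{L}_n(\kappa)$.

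Next I would invoke the hypothesis $n<0$. Since the maximal Thurston--Bennequin number of the unknot in $(S^3,\xi_{std})$ is $-1$ and $n\le -1$, there is a Legendrian unknot $U\subset(S^3,\xi_{std})$ with $tb(U)=n$. By the Legendrian neighborhood theorem its standard neighborhood is contactomorphic to $(S^1\times D^2,\xi_n)$, carrying the core $J$ to $U$ and the product framing to the Seifert ($0$-)framing of $U$; this is exactly the point at which $n\le -1$ is used, and it matches the disc-capping estimate $tw_J=tb_D(U)\le-\chi(D)=-1$. I thereby obtain a contact embedding $\varphi\colon(S^1\times D^2,\xi_n)\hookrightarrow(S^3,\xi_{std})$. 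Here $U=\varphi(J)$ bounds a disc $D\subset S^3$, and since $lk(K',J)=0$ the disc $D$ may be chosen disjoint from $\varphi(K')$; gluing along $J$ produces an embedded Seifert surface $\widehat{\Sigma}=\Sigma'\cup_J D$ for the knot $\widehat{K}=\varphi(K')$, with $\chi(\widehat{\Sigma})=\chi(\Sigma')+1$.

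Because $D$ is attached along $J$ and away from $K'$, the contact framing of $K'$ relative to $Fr_{\Sigma'}$ coincides with that of $\widehat{K}$ relative to $Fr_{\widehat{\Sigma}}$, so $tw_{K'}(\xi_n,Fr_{\Sigma'})=tb_{\widehat{\Sigma}}(\widehat{K})$. The classical Thurston--Bennequin inequality in the tight manifold $(S^3,\xi_{std})$ then gives $tb_{\widehat{\Sigma}}(\widehat{K})\le Tb(\varphi(\kappa))$, the maximal Thurston--Bennequin number of the smooth knot type $\varphi(\kappa)$, which is finite and independent of the representative (all $K$ have type $\kappa$ in $S^1\times D^2$, and unlinking preserves this type, so every $\widehat{K}$ has type $\varphi(\kappa)$). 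Combining, $\widetilde{tb}_n(K,J)=tb_{\widehat{\Sigma}}(\widehat{K})-n\le Tb(\varphi(\kappa))-n$, a bound depending only on $\kappa$ and $n$. (One could instead apply the generalized inequality of Lemma \ref{generaltb} directly to $\Sigma'$, but that only yields $tw_{K'}\le-\chi(\Sigma')-n$ for the given surface; passing to $(S^3,\xi_{std})$ and to $Tb(\varphi(\kappa))$ is what makes the bound uniform, since $\chi(\Sigma')$ need not be controlled a priori.)

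The main obstacle is the second step: identifying $(S^1\times D^2,\xi_n)$ with the standard neighborhood of a Legendrian unknot of Thurston--Bennequin number exactly $n$, and verifying that under this identification the product framing is sent to the Seifert framing so that the twisting numbers transfer correctly and $\widehat{\Sigma}$ is genuinely embedded. This identification is available precisely when $n\le -1$, which is also exactly where the tightness of $(S^3,\xi_{std})$ supplies the finite bound; for $n\ge 1$ no such contact embedding into a tight $S^3$ exists, consistent with the theorem claiming an upper bound only in the regime $n<0$.
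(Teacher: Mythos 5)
Your overall route is the paper's own: unlink $K$ via Proposition \ref{unlink}, use the preceding lemma to write $\widetilde{tb}_n(K)=tw_{K'}(\xi_n,Fr_{\Sigma'})-n$, realize $(S^1\times D^2,\xi_n)$ as the framed standard Legendrian neighborhood of an unknot $K_0\subset(S^3,\xi_{std})$ with $tb(K_0)=n$ (possible precisely because $n\le -1$, by stabilizing a $tb=-1$ unknot), cap off with a Seifert disc $D$ for $K_0$, and bound $tb_{\Sigma'\cup D}(\varphi(K'))$ by the maximal Thurston--Bennequin number of the knot type of $\varphi(K)$, arriving at the same final bound $Tb(\varphi(\kappa))-n$. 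The use of the framing-preserving property of $\varphi$ to transfer twisting numbers also matches the paper.

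However, one step rests on an inference that is false as stated: you claim that ``since $lk(K',J)=0$ the disc $D$ may be chosen disjoint from $\varphi(K')$.'' Vanishing linking number does not imply geometric unlinking (the Whitehead link is the standard counterexample), so $lk(K',J)=0$ alone does not furnish such a disc. What actually saves the argument is the geometric content of Proposition \ref{unlink}: its construction produces an embedded annulus from the isotoped core to $\partial(S^1\times D^2)$ that is disjoint from $K'$, and the image of this annulus capped with a meridian disc of the complementary solid torus in $S^3$ yields a disc genuinely disjoint from $\varphi(K')$ --- you need that geometric input, not merely the homological one. Relatedly, you assert that $\widehat{\Sigma}=\varphi(\Sigma')\cup D$ is embedded, but the interior of $D$ may a priori intersect the interior of $\varphi(\Sigma')$ in arcs and closed curves, not only the knot; this is exactly the point you flagged as ``the main obstacle'' and then did not resolve. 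The paper handles it explicitly: since both $Fr_D$ and $Fr_{\varphi(\Sigma)}$ on $K_0$ are Seifert framings, the two interiors can first be isotoped to meet only along $K_0$ near $K_0$, and the remaining arc and circle intersections are then eliminated by a standard innermost-curve isotopy of the interior of $D$ (citing \cite{georgi}), without changing $tb_D(K_0)$ or $tb_{\varphi(\Sigma)}(\varphi(K))$. With these two repairs --- geometric unlinking supplied by the annulus in Proposition \ref{unlink}, and the intersection-elimination step before gluing --- your proposal coincides with the paper's proof.
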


\begin{proof} Apply an unlinking Legendrian isotopy to a Legendrian representative $K$ of $\kappa$ in $(S^1\times D^2,\xi_n)$ homologous to the core $J=S^1\times\{0\}$. Consider a Legendrian unknot $K_0$ in $(S^3,\xi_{std})$ and a 2-disc $D$ bounded by $K_0$. Arrange that $tw_{K_0}(\xi_{std},Fr_D)=tb(K_0)=n<0$. Since $Tb=-1$ for the trivial knot type in $(S^3,\xi_{std})$, stabilization allows us to construct such a Legendrian unknot.  Now take a framed Legendrian neighborhood $(N(K_0),\xi_{std}\rvert_{N(K_0)})\simeq(S^1\times D^2,\xi_n)$, where $K_0$ is sent to $J=S^1\times\{0,0\}$, the Seifert framing on $K_0$ given by the product framing on $S^1\times\{0,0\}$. We have a framing-preserving contactomorphism $\varphi: (S^1\times D^2,\xi_n)\rightarrow(N(K_0),\xi_{std}\rvert_{N(K_0)})$. Then $\varphi(K)\subset N(K_0)$ is unlinked from $K_0$ and cobounds a surface $\varphi(\Sigma)$ with $K_0$ such that $tw_{K_0}(\xi_{std},Fr_{\varphi(\Sigma)})=n$. Since both framings $Fr_D$ and $Fr_{\varphi(\Sigma)}=\varphi_\ast(Fr_\Sigma)$ on $K_0$ are Seifert, they are both given by push-offs into each respective surface which vanish in $H_1(N(K_0)\setminus K_0)$. Thus, we can isotop the interiors of $D$ and $\varphi(\Sigma)$ so that in a neighborhood of $K_0$ they intersect only in $K_0$. Away from that neighborhood, however, the interior of $D$ may intersect the interior of $\varphi(\Sigma)$ and/or the knot $\varphi(K)\subset N(K_0)$. The possible intersections are arcs and closed curves, which are eliminated standardly (see \cite{georgi}) by locally isotoping the interior of $D$ without changing $tb_D(K_0)$ or $tb_{\varphi(\Sigma)}\varphi(K)$. Now $\varphi(\Sigma)$ and $D$ intersect only in $K_0$. So $\Sigma'=\varphi(\Sigma)\cup D$ is a Seifert surface for $\varphi(K)$. Since $\varphi$ is framing-preserving, $tw_K(\xi_n,Fr_\Sigma)=tw_{\varphi(K)}(\varphi_\ast(\xi_n),Fr_{\Sigma'})=tw_{\varphi(K)}(\xi_{std}\rvert_{N(K_0)},Fr_{\Sigma'})= tb_{\Sigma'}(\varphi(K))$. Therefore, $\widetilde{tb}_n(K)=tw_K(\xi_n,Fr_\Sigma)-n=tb_{\Sigma'}(\varphi(K))-n$ and $tb_{\Sigma'}(\varphi(K))$ is bounded above by the maximal Thurston-Bennequin invariant for the knot type of $\varphi(K)$ in $S^3$. This upper bound is independent of $\varphi$ and only depends on the smooth knot type of $K$ in $S^1\times D^2$. Therefore, $\widetilde{tb}_n(K)=tb_{\Sigma'}(\varphi(K))-n\leq Tb(\varphi(K))-n$ so $\widetilde{tb}_n(K)$ is bounded above. \end{proof}

For $K\subset(S^1\times D^2,\xi_n)$, let $\widetilde{Tb}_n(K)=\max\{\widetilde{tb}_n(K)\ \rvert\  K\text{ is of type } \kappa\}$.

\section{Limitations of the relative Legendrian knot invariants}

Let $K\subset(S^3,\xi_{std})$ be a null-homologous Legendrian knot, and let $J'=S^1\times \{0,\frac{1}{2}\}$ in $(S^1\times D^2,\xi_n)$ with $J\cup J'=\partial A$ for $A=S^1\times \{(x,y)\rvert x=0,0\leq y\leq\frac{1}{2}\}$. Then $\widetilde{tb}(J',J)=0$. Form the Legendrian connected sum $K'=J'\#K$ in $(S^1\times D^2,\xi_n)\#(S^3,\xi_{std})\cong(S^1\times D^2,\xi_n)$ with Seifert surface $\Sigma'$. Then $\widetilde{tb}_{\Sigma'}(K',J)=\widetilde{tb}_A(J',J)+tb_\Sigma(K)+1=tb_\Sigma(K)+1$ and $\widetilde{r}_{\Sigma'}(K',J)=\widetilde{r}_{A}(J',J)+r_\Sigma(K)=r_\Sigma(K)$.

Consider Chekanov's examples of Legendrian embeddings $K_1,K_2\subset(S^3,\xi_{std})$ of the $5_2$ knot with $tb(K_1)=tb(K_2)$ and $r(K_1)=r(K_2)$, yet not Legendrian isotopic (see \cite{chekanov}). The knots $K_1'=J'\#K_1$ and $K_2'=J'\#K_2$ in $(S^1\times D^2,\xi_n)$ are homologous to the core $J$ with $\widetilde{tb}_n(K_i',J)=tb(K_i)+1$ and $\widetilde{r}_n(K_i',J)=r(K_i)$.

\begin{lemma} 
The knots $K_1',K_2'\subset(S^1\times D^2,\xi_n)$ are not Legendrian isotopic.
\end{lemma}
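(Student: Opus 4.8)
The plan is to argue by contradiction: assume $K_1'$ and $K_2'$ are Legendrian isotopic in $(S^1\times D^2,\xi_n)$ and split off the $5_2$-summands to manufacture a Legendrian isotopy between $K_1$ and $K_2$ in $(S^3,\xi_{std})$, contradicting Chekanov's theorem. Recall that each $K_i'=J'\#K_i$ is the semi-relative Legendrian connected sum of the core-parallel curve $J'$ (homologous to $J$, with $\widetilde{tb}(J',J)=0$) and the local $5_2$-knot $K_i$, where $K_i$ sits inside a standard contact ball $B_i\subset(S^1\times D^2,\xi_n)$ coming from the $(S^3,\xi_{std})$ summand and the sum is realized by a splitting $2$-sphere $S_i=\partial B_i$.

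First I would record the smooth picture. As a knot in the solid torus, $K_i'$ is core-homologous with a single local $5_2$-knotting, and the $5_2$-factor is the unique nontrivial local summand; by uniqueness of the prime (companion) connected-sum decomposition of knots in $S^1\times D^2$, the splitting sphere is determined up to smooth isotopy. Hence a Legendrian, and in particular smooth, isotopy carrying $K_1'$ to $K_2'$ carries $S_1$ to a sphere smoothly isotopic to $S_2$. I would then upgrade this to the contact category following the Etnyre--Honda splitting machinery of Lemma \ref{split} and the relative splitting construction: perturb the transported sphere to be convex, use tightness of $\xi_n$ and Giroux's Criterion to see its dividing set is a single connected curve, and cut along it, capping each side with a standard contact ball. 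Colin's Theorem \ref{colin} identifies the resulting tight structures on each side, and the Legendrian arcs $\gamma',\gamma''$ carrying the controlled half-twists complete the split pieces exactly as in Lemma \ref{split}, so the $(S^3,\xi_{std})$-side yields honest Legendrian representatives of the $5_2$-factor obtained from $K_1'$ and from $K_2'$, namely $K_1$ and $K_2$. Because the assumed isotopy persists through the splitting, these two are Legendrian isotopic, giving $K_1\simeq K_2$ and the desired contradiction.

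The main obstacle is controlling the splitting sphere under the isotopy — equivalently, the uniqueness of the Legendrian connected-sum decomposition of $K_i'$ up to Legendrian isotopy of its summands. The delicate point is the stabilization ambiguity: a priori a stabilization could be shuffled between the two factors, as in $S(J')\#K_i=J'\#S(K_i)$, and this is precisely the mechanism by which the analogous knots \emph{collapse} when one instead embeds $(S^1\times D^2,\xi_n)$ as a neighborhood of a Legendrian unknot $K_0\subset(S^3,\xi_{std})$ via the contactomorphism $\varphi$: there $J'$ becomes an (absorbable) unknot summand, the negative twisting turns $\varphi(K_i')$ into iterated stabilizations of $K_i$, and the Chekanov distinction dies. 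In $(S^1\times D^2,\xi_n)$, by contrast, $J'$ is an \emph{essential} core-parallel curve and is not an unknot, so no stabilization of $J'$ can be absorbed across $S_i$; the $5_2$-factor is a genuine, non-absorbable prime summand.

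Thus the crux is to show that the equality of the relative invariants $\widetilde{tb}_n$ and $\widetilde{r}_n$ cannot be leveraged through shuffling to identify the two decompositions $[(J',K_1)]$ and $[(J',K_2)]$. This is exactly what the relative structure theorem (Theorem \ref{relstr}) supplies, by classifying Legendrian knots in a relative knot type in terms of their prime relative connected-sum components. Granting it, the decomposition classes coincide only if the $5_2$-components $K_1$ and $K_2$ are themselves Legendrian isotopic, which Chekanov's examples rule out; hence $K_1'$ and $K_2'$ are not Legendrian isotopic. I would close by remarking that this realizes the advertised limitation: $K_1'$ and $K_2'$ are smoothly isotopic with equal relative invariants yet distinct, mirroring the classical behavior of $tb$ and $r$.
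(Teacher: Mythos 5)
Your argument is correct in substance but follows a genuinely different route from the paper's. The paper argues extrinsically: assuming $K_1'$ and $K_2'$ are Legendrian isotopic, it pushes the isotopy forward under the framing-preserving contactomorphism $\varphi$ identifying $(S^1\times D^2,\xi_n)$ with a standard neighborhood of a Legendrian unknot $U$ with $tb(U)=-n$ in $(S^3,\xi_{std})$, computes $tb(\varphi(K_i'))=\widetilde{tb}_n(K_i',J)-n=tb(K_i)+1-n$ and $r(\varphi(K_i'))=r(K_i)+r_D(U)$, and concludes that two Legendrian isotopic representatives of $5_2$ with equal classical invariants contradict Chekanov. You instead stay inside the solid torus: you split the connected sum along a convex sphere as in Lemma \ref{split} (with Theorem \ref{colin} controlling the tight pieces) and invoke the relative structure theorem, Theorem \ref{relstr}, to conclude that $J'\#K_1\simeq J'\#K_2$ forces $(J',K_1)\sim(J',K_2)$, after which you must rule out the stabilization shuffle. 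Your route costs more machinery (Theorem \ref{relstr} is only sketched in the paper), but it buys something real: as you yourself observe, for $n\geq 2$ the images $\varphi(K_i')$ are honest stabilizations $S_+^aS_-^b(K_i)$ with $a+b=n-1$, and stabilized Chekanov knots do become Legendrian isotopic (Fuchs--Tabachnikov; for this pair a single stabilization is already known to suffice), so the paper's closing appeal to ``Chekanov's result'' is immediate only when the embedding introduces no stabilizations, whereas your intrinsic argument is uniform in $n$. One repair to your write-up: your stated reason that no stabilization can be shuffled --- that $J'$ ``is essential and not an unknot'' --- is not the operative one; what blocks the shuffle in the equivalence relation of Theorem \ref{relstr} is that neither factor destabilizes, since $K_1,K_2$ realize $Tb(5_2)=1$ and $J'$ realizes the maximal relative invariant $\widetilde{Tb}=0$ of the core-parallel type (the maximal-twisting property of the standard neighborhood, cf.\ Theorem \ref{tbnbound}). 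With that fixed, the equivalence class of the tuple determines $K_i$ up to Legendrian isotopy, and Chekanov's theorem gives the contradiction.
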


\begin{proof} 
If $K_1'$ and $K_2'$ were Legendrian isotopic, then we embed $(S^1\times D^2,\xi_n)$ in $(S^3,\xi_{std})$ as a framed Legendrian neighborhood of an unknot $U$ with $tb=-n$ (see Section 4). The knots $K_i'$ are unlinked from $J$ so $\varphi(J)$ bounds a 2-disc $D$ disjoint from the Seifert surface of each $\varphi(K_i')$ . This produces two Legendrian isotopic knots with $tb(\varphi(K_i'))=\widetilde{tb}_n(K_i',J)-n=tb(K_i)-n+1$ and $r(\varphi(K_i'))=\widetilde{r}_n(K_i')=r(K_i)+r_D(U)$. Since $tb(K_1)=tb(K_2)$ and $r(K_1)=r(K_2)$, we have two Legendrian embeddings that are both stabilizations of of the Legendrian representatives $K_i$ of $5_2$ knot in $(S^3,\xi_{std})$ with equal invariants and are Legendrian isotopic, contradicting Chekanov's result. 
\end{proof}

This strongly suggests that the relative invariants exhibit the same limitations as their classical analogues. Arguing this in general would follow a similar argument.

\section{Legendrian knots which cobound an embedded annulus}

We will prove the following general theorem. The special case for $(S^1\times D_2,\xi_n)$ with $J\subset(S^1\times D_2,\xi_n)$ denoting the Legendrian core follows directly.

\begin{theorem}\label{boundannulus}
Let $K,J$ be Legendrian knots in a tight contact 3-manifold $(M,\xi)$ cobounding an embedded annulus $A\hookrightarrow (M,\xi)$ with $\widetilde{tb}_A(K,J)=0$ and $\widetilde{r}_A(K,J)=0$. There is a global contact isotopy of $(M,\xi)$ (fixing $\partial M$ if $\partial M\neq\emptyset$) sending $K$ to $J$.
\end{theorem}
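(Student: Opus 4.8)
The plan is to make the cobounding annulus convex and to read the desired contact isotopy off the structure of its dividing set, in direct analogy with the model annulus in $(S^1\times D^2,\xi_n)$ whose Legendrian ruling traces out a Legendrian isotopy of the core in Lemma~\ref{unlinkonce} and Proposition~\ref{unlink}. First I would perturb $A$ relative to $\partial A=K\cup J$ to be convex. Since $\widetilde{tb}_A(K,J)=0$ forces $tw_A(K)=tw_A(J)$, I can arrange (if necessary by a preliminary modification of $A$ that fixes $K$ and $J$ and leaves both relative invariants unchanged) that this common twisting is nonpositive, so that the convexity result quoted in the Background section applies. Write $\Gamma_A$ for the resulting dividing set and $A\setminus\Gamma_A=A_+\amalg A_-$.

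Next I would classify the components of $\Gamma_A$. By Giroux's Criterion and tightness, $\Gamma_A$ contains no contractible closed curves, so each closed component is parallel to the core of $A$, while each arc component is either a spanning arc joining $K$ to $J$ or a boundary-parallel arc cutting off a half-disc from $K$ or from $J$. The number of endpoints of $\Gamma_A$ on $K$ is $2|tw_A(K)|$, and likewise on $J$, so $\widetilde{tb}_A(K,J)=0$ gives that the number of boundary-parallel arcs on the $K$-side equals that on the $J$-side. The relative rotation number enters through the convex-surface computation of the relative Euler class appearing in its definition, namely $\widetilde{r}_A(K,J)=e(\xi,v_K\cup v_J)([A])=\chi(A_+)-\chi(A_-)$, so the hypothesis $\widetilde{r}_A(K,J)=0$ reads $\chi(A_+)=\chi(A_-)$.

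The heart of the argument, and the step I expect to be the main obstacle, is to remove all boundary-parallel arcs and all closed core-parallel curves, so that $\Gamma_A$ reduces to the standard configuration of $2|tw_A(K)|$ parallel spanning arcs. Each boundary-parallel arc carries a bypass; I would cancel these in matched $K$/$J$ pairs (available by the count above) so that every cancellation preserves the balance $\widetilde{tb}_A=0$, and I would use the equality $\chi(A_+)=\chi(A_-)$ together with tightness to show that no half-disc region and no nested core-parallel curve can persist: any survivor would either unbalance $\chi(A_+)-\chi(A_-)$ or force a destabilization incompatible with the equality of the relative invariants. The delicate point is to organize the bypass moves and the isotopies of $A$ so that they genuinely fix $K$ and $J$ up to \emph{isotopy} rather than destabilizing them, and to rule out persistent closed dividing curves --- here the Imbalance Principle gives no leverage, precisely because $tw_A(K)=tw_A(J)$, so one must lean on the rotation-number equality $\widetilde{r}_A(K,J)=0$.

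Once $\Gamma_A$ consists of parallel spanning arcs, Giroux Flexibility lets me change the characteristic foliation of $A$ to one ruled by Legendrian circles parallel to the core, each transverse to every spanning arc and with $K$ and $J$ among the leaves. This is exactly the Legendrian ruling exploited in the $(S^1\times D^2,\xi_n)$ lemmas: sliding along the ruling is a Legendrian isotopy from $K$ to $J$, which by the contact isotopy extension theorem (via Gray Stability) extends to a global contact isotopy of $(M,\xi)$ supported in a neighborhood of $A$, hence fixing $\partial M$ when $\partial M\neq\emptyset$. This produces the required isotopy carrying $K$ to $J$.
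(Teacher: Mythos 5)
Your overall scheme---convexify $A$, normalize $\Gamma_A$ to parallel spanning arcs, then use Giroux flexibility to get a core-parallel Legendrian ruling with $K$ and $J$ as leaves---matches the paper's endgame, but the normalization step you yourself flag as the main obstacle is a genuine gap, and the two mechanisms you offer to close it do not work. First, the balance $\chi(A_+)=\chi(A_-)$ cannot rule out closed core-parallel dividing curves: a pair of such curves cuts off an annular region of Euler characteristic $0$, so it never unbalances $\chi(A_+)-\chi(A_-)$, and these curves are essential in $A$, so Giroux's Criterion and tightness permit them. (What actually excludes them is that any boundary-to-boundary dividing arc must intersect every core-parallel closed curve, so closed components vanish as soon as one spanning arc exists---but producing that spanning arc is itself the content of the paper's third proof of Lemma \ref{trace}, which argues by contradiction through a torus, not through $\chi$.) Second, a matched pair of boundary-parallel half-discs of opposite sign along the same boundary component also balances $\chi$, and removing a boundary-parallel arc by attaching its bypass is precisely a destabilization of that boundary knot; nothing in your hypotheses forbids $K$ and $J$ from both being destabilizable (e.g.\ both obtained by stabilizations of opposite signs), so ``cancel in matched pairs while fixing $K$ and $J$ up to isotopy'' is unproved and is essentially the theorem in disguise. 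Third, you never address $tw_A(K)=tw_A(J)=0$, which the generalized Bennequin inequality allows: there $\Gamma_A$ has no endpoints on $\partial A$, your ``standard configuration of $2|tw_A(K)|$ spanning arcs'' is empty, and a ruling by core-parallel circles would be parallel to, not transverse to, the closed dividing curves, so the flexibility argument fails outright. The paper isolates exactly this case in the remark following the twisting lemma and handles it by stabilizing both knots before running the argument; you should do the same.

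The paper avoids your dividing-set normalization entirely, and you would do well to adopt one of its two devices. In the first proof of Lemma \ref{trace}, one takes a second convex annulus $A'$ with $\partial A'=K\cup J$ and edge-rounds $A\cup A'$ to a convex torus $T$; on the closed torus, tightness and Giroux's Criterion force $\Gamma_T$ to consist of parallel essential non-meridional curves with no case analysis of boundary-parallel arcs, the hypothesis $tw<0$ guarantees $K$ and $J$ meet $\Gamma_T$, and Giroux flexibility then yields a ruling of $T$ containing $K$ and $J$ as leaves, through which the Legendrian isotopy runs before being extended globally by isotopy extension. The second proof is even more economical: embed a solid torus neighborhood of $A$, invoke the uniqueness of the tight contact structure $\xi_n$ on $S^1\times D^2$ with the given boundary data, and transport the obvious isotopy through parallel copies of the core from the model. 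Either route replaces your bypass-cancellation step, which as written does not close.
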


\begin{lemma}
Given two Legendrian knots $K,J\subset (M,\xi)$ which cobound an annulus $A$ with $\widetilde{tb}_A(K,J)=0$, then $tw_K(\xi,Fr_A)=tw_J(\xi,Fr_A)\leq0$.
\end{lemma}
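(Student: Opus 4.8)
The plan is to separate the statement into its two assertions. The equality $tw_K(\xi,Fr_A)=tw_J(\xi,Fr_A)$ is immediate from the hypothesis: by definition $\widetilde{tb}_A(K,J)=tw_K(\xi,Fr_A)-tw_J(\xi,Fr_A)$, so the assumption $\widetilde{tb}_A(K,J)=0$ forces the two twistings to coincide. I denote their common value by $t$, and the remaining work is to show $t\leq 0$.

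For the inequality I would apply the generalized Thurston--Bennequin inequality (Lemma \ref{generaltb}) directly to the cobounding annulus $A$ itself, viewed as the surface with Legendrian boundary $\partial A = K\cup J$. The crucial feature of the annulus is that $\chi(A)=0$. Since $(M,\xi)$ is tight, Lemma \ref{generaltb} gives
$$tw_K(\xi,Fr_A)+tw_J(\xi,Fr_A)+|r(K)+r(J)|\leq -\chi(A)=0,$$
with the rotation numbers taken in the boundary orientations induced by $A$. Substituting $tw_K(\xi,Fr_A)=tw_J(\xi,Fr_A)=t$ yields $2t+|r(K)+r(J)|\leq 0$, and because the absolute-value term is nonnegative I conclude $2t\leq 0$, hence $t\leq 0$, which is exactly the claim.

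The one point that requires care—and essentially the only subtlety—is the orientation bookkeeping in the rotation-number term. Because the relative invariants are set up so that $[\partial A]=[K]-[J]$, one of the two boundary circles inherits the reverse of its chosen orientation under the boundary orientation of $A$. Since the rotation number changes sign under orientation reversal (while the twisting is insensitive to orientation and is therefore unaffected), the term that appears is really $|r(K)-r(J)|$ after this accounting. In either sign convention the rotation-number term is nonnegative, so it plays no role in the final inequality and the conclusion $t\leq 0$ holds regardless. Thus, once Lemma \ref{generaltb} is invoked for the annulus, the result is a direct substitution; the bulk of the content is simply recognizing that $\chi(A)=0$ makes the generalized inequality collapse onto the equal twistings.

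I note that one could alternatively argue by perturbing $A$ to a convex surface and reading off $t=-\tfrac12\#(K\cap\Gamma_A)\leq 0$ from the dividing set, but this route is circular here, since the perturbation result quoted in the background needs $tw\leq 0$ as a hypothesis; the generalized Bennequin inequality sidesteps this and is the cleaner tool.
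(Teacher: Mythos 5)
Your proof is correct and takes essentially the same route as the paper: the paper's own one-line proof applies the generalized Thurston--Bennequin inequality (Lemma \ref{generaltb}, though the paper cites it as Lemma \ref{reltb}, evidently a mislabel) to the annulus, uses $\chi(A)=0$, and concludes $tw_K(\xi,Fr_A)=tw_J(\xi,Fr_A)\leq 0$ exactly as you do. Your extra care with the orientation of the rotation-number term and your remark on the circularity of the convex-perturbation alternative are sound additions but not needed for the argument.
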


\begin{proof}
By Lemma \ref{reltb}, $tw_K(\xi,Fr_A)+tw_J(\xi,Fr_A)+\rvert r_A(K)+r_A(J)\rvert\leq-\chi(A)=0$ which implies $tw_K(\xi,Fr_A)=tw_J(\xi,Fr_A)\leq 0$.
\end{proof}

The above lemma and Honda's theorem (\cite{honda:one}), which extends Giroux's results to surfaces with boundary, imply that $A$ can be isotoped to be convex, rel $\partial A$, $C^0$-small near $\partial A$ and $C^\infty$-small away from $\partial A$. When $tw_K(\xi,Fr_A)=tw_J(\xi,Fr_A)<0$, we prove Theorem \ref{boundannulus} by foliating $A$ by Legendrian knots parallel to the boundary thus tracing out a Legendrian isotopy between $K$ and $J$. 

\begin{remark}
The argument do not apply when $tw_K(\xi,Fr_A)=tw_J(\xi,Fr_A)=0$. In this case, we stabilize the Legendrian knots and then apply this argument.  
\end{remark}

\begin{lemma}\label{trace}
Given two Legendrian knots $K,J\subset (M,\xi)$ which cobound an annulus $A$ with $\widetilde{tb}_A(K,J)=\widetilde{r}_A(K,J)=0$ and $tw_K(\xi,Fr_A)=tw_J(\xi,Fr_A)<0$, $A$ can be isotoped to be convex rel boundary with characteristic foliation with Legendrian leaves parallel to the boundary components.
\end{lemma}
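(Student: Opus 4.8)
The plan is to use convex surface theory to pin down the dividing set $\Gamma_A$ and then realize the desired foliation from it. Since $tw_K(\xi,Fr_A)=tw_J(\xi,Fr_A)\le 0$ (by the preceding lemma), Honda's theorem makes $A$ convex rel $\partial A$, as already noted in the text, so both components of $\partial A$ are Legendrian and transverse to $\Gamma_A$. Writing $n:=-tw_K(\xi,Fr_A)=-tw_J(\xi,Fr_A)>0$ and using the standard relation $tw_K(\xi,Fr_A)=-\tfrac12\,\#(K\cap\Gamma_A)$ for a Legendrian boundary component of a convex surface, each of $K$ and $J$ meets $\Gamma_A$ in exactly $2n$ points. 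Because $(M,\xi)$ is tight, Giroux's Criterion forbids contractible closed components of $\Gamma_A$; on an annulus this means every closed component is parallel to the core and every properly embedded arc is either a \emph{through-arc} joining $K$ to $J$ or is boundary-parallel (cutting off a disc).

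Next I would use the relative rotation number to eliminate everything but through-arcs. Writing $A\setminus\Gamma_A=A_+\amalg A_-$, the relative rotation number computes the relative Euler number $\widetilde{r}_A(K,J)=e(\xi,v_K\cup v_J)([A])=\chi(A_+)-\chi(A_-)$, where $v_K,v_J$ are the Legendrian tangents to the (oriented) boundary. If $\Gamma_A$ were exactly $2n$ through-arcs, they would cut $A$ into $2n$ rectangular regions alternating in sign, so $\chi(A_+)-\chi(A_-)=n-n=0$, matching the hypothesis. A boundary-parallel arc instead cuts off a half-disc of a definite sign; since $K$ and $J$ carry equal numbers of intersection points, boundary-parallel arcs occur in matched pairs and their half-discs contribute to $\chi(A_+)-\chi(A_-)$, while a core-parallel closed component (which can only occur when there are no through-arcs, hence only alongside boundary-parallel arcs) separates off further pieces. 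In each such case the count $\chi(A_+)-\chi(A_-)$ is pushed off $0$, so the hypothesis $\widetilde{r}_A(K,J)=0$ leaves only the configuration in which $\Gamma_A$ consists of $2n$ parallel through-arcs.

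Finally, with $\Gamma_A$ equal to $2n$ parallel through-arcs I would recognize $A$ as standard. The germ of $\xi$ along a convex surface is determined by its characteristic foliation, and by Giroux's Flexibility Theorem only by its dividing set; these are precisely the dividing data of the (perturbed) flat annulus $S^1\times[(0,0),(c,d)]$ inside $(S^1\times D^2,\xi_n)$, whose flat representative is tangent to $\xi_n$ along the $2n$ vertical lines $\sin(2\pi n z)=0$ and is visibly foliated by the Legendrian circles $S^1\times\{pt\}$ parallel to its boundary. Hence a neighborhood of $A$ is contactomorphic rel $\partial A$ to a neighborhood of this model, and transporting the model's foliation back presents $A$, in standard (ruled) form, as swept out by Legendrian curves parallel to $K$ and $J$ — exactly the interpolating family needed for Theorem \ref{boundannulus}.

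The main obstacle is the middle step: making the sign bookkeeping in $\chi(A_+)-\chi(A_-)$ rigorous, that is, showing that boundary-parallel arcs and core-parallel closed curves always contribute with coherent, non-cancelling signs, so that $\widetilde{r}_A=0$ genuinely forces their \emph{absence} rather than merely a cancellation among them. Pinning this down requires tracking the orientations induced by $[\partial A]=[K]-[J]$ against the alternation of $A_\pm$ along each boundary component. An alternative that sidesteps the combinatorics is to take an innermost boundary-parallel dividing arc, which carries a bypass, and argue that attaching it would destabilize a boundary component; since $K$ and $J$ are fixed and $\widetilde{tb}_A(K,J)=0$ already makes $tw_K=tw_J$ extremal, such a bypass cannot be present, again leaving only the $2n$ through-arcs.
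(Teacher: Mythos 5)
Your opening reductions are fine: convexity rel boundary, the count $\#(K\cap\Gamma_A)=\#(J\cap\Gamma_A)=2n$ from $tw=-\tfrac12\#(\partial\cap\Gamma)$, and the exclusion of contractible components via Giroux's Criterion. But the central step --- that $\widetilde{r}_A(K,J)=\chi(A_+)-\chi(A_-)=0$ forces $\Gamma_A$ to consist of $2n$ through-arcs --- is false, and the obstacle you flagged is fatal rather than technical. Take $n=1$ and let $\Gamma_A$ consist of one boundary-parallel arc cutting off a bigon at $K$, one boundary-parallel arc cutting off a bigon at $J$, and one closed core-parallel curve separating them. The closed curve inserts an extra sign alternation, so the two bigons acquire \emph{opposite} signs: reading off the regions $B_K(+)$, annular region $(-)$, annular region $(+)$, $B_J(-)$, one gets $\chi(A_+)-\chi(A_-)=(1+0)-(0+1)=0$, while each boundary component meets $\Gamma_A$ in exactly two points, i.e.\ $tw_K=tw_J=-1<0$. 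Every numerical hypothesis of the lemma is satisfied with no through-arc present. (Your bookkeeping does work when no closed component intervenes: then both bigons abut a common region and are forced to the same sign, giving $\pm2\neq0$; it is precisely the co-occurrence of a core-parallel closed curve with boundary-parallel arcs --- which you correctly noted is the only way closed curves appear --- that produces the cancellation.) Your fallback is also unsound: $\widetilde{tb}_A(K,J)=0$ asserts only the \emph{equality} $tw_K=tw_J$, not extremality, so nothing prevents both $K$ and $J$ from being destabilizable (e.g.\ both stabilized copies of a core), and the bypass carried by an innermost boundary-parallel arc cannot be excluded on those grounds.

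This is exactly why the paper does not try to pin down $\Gamma_A$ from the invariants alone. Its first proof doubles the annulus: it takes a second convex annulus $A'$ with the same boundary, edge-rounds to a convex torus $T=A\cup A'$, and works there, where tightness and Giroux's Criterion force $\Gamma_T$ to be an even number of parallel essential curves; since the twisting is negative, $K$ and $J$ meet $\Gamma_T$, and Giroux's Flexibility Theorem then puts $T$ in standard form foliated by closed Legendrian leaves parallel to the knots. Bad configurations are excluded at the torus level, not the annulus level --- the paper's third proof records the special case that a $\Gamma_A$ consisting only of boundary-parallel arcs would close up to a contractible dividing curve on $T$. The second proof avoids dividing-set combinatorics altogether, invoking uniqueness of the tight contact structure $\xi_n$ on a solid torus neighborhood of $A$ and transporting the obvious model isotopy. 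To salvage your direct approach you would need to show that isotopies of $A$ rel boundary (bypass moves) can always remove the closed component and the bigons, which is in effect what the doubling argument supplies; as written, your final flexibility step is correct but rests on a dividing-set configuration you have not established.
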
 

\begin{proof}[First Proof of Lemma \ref{trace}] 
Note $tw_K(\xi,Fr_A)=tw_J(\xi,Fr_A)<0$ implies that the dividing set $\Gamma_A$ has a nonempty intersection with $K$ and $J$. Take another convex annulus $A'$ with $\partial A'=K\cup J$ such that $A\cap A'=K\cup J$. Edge-Round along $K$ and $J$ to build a convex torus $T=A\cup A'$. This process uses standard framed Legendrian neighborhoods around $J$ and $K$ and replaces their intersection with $A\cup A'$ by a smooth embedded surface. Locally, this is a Legendrian isotopy of $K$ and $J$ to knots $K',J'$ (see \cite{honda:one}). By tightness and Giroux's Criterion, $\Gamma_T$ consists of an even number of parallel dividing curves that are not meridional. Note that $K, J\subset T$ are of the same homology class. Also, $tw_K(\xi,Fr_A)=tw_J(\xi,Fr_A)<0$ implies that $tw_{K'}(\xi,Fr_T)=tw_{J'}(\xi,Fr_T)<0$, so $J'$ and $K'$ intersect $\Gamma_T$. We can isotop $T$ to be foliated by leaves parallel to the knots $K'$ and $J'$ using Giroux's Flexibility theorem (\cite{giroux:convex}). This is a Legendrian isotopy of $K'$ and $J'$, sending them to $K''$ and $J''$ on the new convex torus $T'$ where they are Legendrian isotopic through the leaves. By the Legendrian Isotopy Extension theorem, the composition of these isotopies is a global contact isotopy. Note that we used $\widetilde{r}_A(K,J)=0$ to build $T$.
\end{proof}

\begin{proof}[Second Proof of Lemma \ref{trace}]  
Parametrize $A, K, J$ as $A=\varphi(S^1\times [-\frac{1}{2},\frac{1}{2}]), J=\varphi(S^1\times \{-\frac{1}{2}\}), K=\varphi(S^1\times \{\frac{1}{2}\})$ in $M$. Extend $\varphi$ to an embedding $S^1\times D^2\hookrightarrow (M,\xi)$. Consider a closed solid torus neighborhood $T$ of $A$. Consider a diffeomorphism $f:T\rightarrow S^1\times D^2$ such that $f:K\mapsto S^1\times \{p_1\}$ and $f:J\mapsto S^1\times \{p_2\}$ for $p_1,\ p_2\in D^2$. Let $S^1\times D^2$ be equipped with the contact structure $\xi_n$, where $tw_J(\xi,Fr_A)=tw_K(\xi,Fr_A)=-n$. Note $f(J)$ and $f(K)$ are Legendrian in $(S^1\times D^2,\xi_n)$, but they are also Legendrian in $(S^1\times D^2,f_\ast(\xi\rvert_T))$. So consider the Legendrian isotopy between them given by just sending $S^1\times \{p_1\}$ to $S^1\times \{p_2\}$ through parallel copies $g_t:S^1\times (1-t)p_1+tp_2$. This is a Legendrian isotopy inside $(S^1\times D^2,\xi_n)$. Since $f$ is a diffeomorphism rel boundary, it is a contactomorphism, and by the uniqueness of the tight contact structure $\xi_n$, there is an isotopy sending $\xi_n$ to $f_\ast(\xi\rvert_T)$. The inverse of this isotopy composed with the Legendrian isotopy from $f(K)$ to $f(J)$ and the inverse of $f$ yields a Legendrian isotopy from $J$ to $K$ in $T$.
\end{proof}

\begin{proof}[Third Proof of Lemma \ref{trace}] 
Note that $\Gamma_A$ has a component running from $K$ to $J$. To see this, assume $\Gamma_A$ consists only of boundary-parallel dividing arcs. Then in the construction of the convex torus $T=A\cup A'$ above, the dividing set would contain a trivial closed curve, contradicting Giroux's Criterion. Therefore, the annulus $A$ necessarily has a boundary-to-boundary dividing arc (an even number of these). 
\end{proof}

\begin{remark} 
Co-bounding an embedded annulus is a transitive relation of knots. In particular, the knots in this relation are smoothly isotopic.
\end{remark}

\begin{lemma}\label{threeannuli} 
Let $K_i$ be a framed knot with framing $Fr_i,\ i=1, 2, 3$ with $\partial A_i=K_i\cup K_{i+1},\ i=1,2$ for embedded annuli $A_i$. There exists an embedded annulus $A$ with $\partial A=K_1\cup K_3$ such that $tw_{K_1}(Fr_{A},Fr_1)=tw_{K_1}(Fr_{A_1},Fr_1)+K_1\cdot A_2$ and, similarly, $tw_{K_3}(Fr_A,Fr_3)=tw_{K_3}(Fr_{A_2},Fr_3)+K_3\cdot A_1$.
\end{lemma}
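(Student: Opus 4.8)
The plan is to build $A$ by concatenating $A_1$ and $A_2$ along their common boundary $K_2$ and then to read off the boundary framings from intersection numbers. First I would put $A_1$ and $A_2$ in general position relative to their boundaries, so that the interiors $\mathrm{int}(A_1)$ and $\mathrm{int}(A_2)$ meet transversally in a finite collection of double curves (circles and boundary-to-boundary arcs), while $K_1=\partial A_1$ meets $\mathrm{int}(A_2)$ and $K_3=\partial A_2$ meets $\mathrm{int}(A_1)$ transversally in finitely many points, whose signed counts are $K_1\cdot A_2$ and $K_3\cdot A_1$. Gluing along $K_2$ produces an immersed surface $A_1\cup_{K_2}A_2$ that is topologically an annulus (an annulus glued to an annulus along a single boundary circle is again an annulus), with $\partial(A_1\cup_{K_2}A_2)=K_1\cup K_3$.

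The framing computation is then clean. Recall the basic fact used throughout the Background and in Lemma~\ref{generalb}, that for a framing $f$ of a knot $K=\partial\Sigma$ one has $tw_K(f,Fr_\Sigma)=K'_f\cdot\Sigma$, where $K'_f$ is the push-off of $K$ determined by $f$. Applying this with $\Sigma=A$ and comparing framings, $tw_{K_1}(Fr_A,Fr_1)$ is computed by the signed intersection of a small $Fr_1$-push-off $\widehat{K_1}$ of $K_1$ with the surface $A$. Since $\widehat{K_1}$ is disjoint from $K_2$ in general position, the intersection number is additive over the two pieces, $\widehat{K_1}\cdot A=\widehat{K_1}\cdot A_1+\widehat{K_1}\cdot A_2$; the first term recovers $tw_{K_1}(Fr_{A_1},Fr_1)$, and because $\widehat{K_1}$ is a parallel copy of $K_1$ lying close to it, $\widehat{K_1}\cdot A_2=K_1\cdot A_2$. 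After fixing orientation conventions so that the signs agree, this gives $tw_{K_1}(Fr_A,Fr_1)=tw_{K_1}(Fr_{A_1},Fr_1)+K_1\cdot A_2$, and the computation for $K_3$ is identical with the roles of $A_1,A_2$ reversed.

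Geometrically the extra term has a concrete source that I would use to make the framing statement rigorous: near $K_1$ the surface $A$ agrees with $A_1$ except at the points where $K_1$ pierces $A_2$, and resolving each such piercing, by rerouting the collar of $K_1$ in $A$ around the intervening sheet of $A_2$, inserts exactly one signed $2\pi$-twist into the surface framing along $K_1$. The net contribution is precisely $K_1\cdot A_2$, matching the homological count above. The remaining interior double curves of $\mathrm{int}(A_1)\cap\mathrm{int}(A_2)$ are disjoint from the boundary; I would remove them by the standard oriented cut-and-paste, followed by compression along any compressing disks in the interior. None of these interior modifications alters the framings on $K_1$ or $K_3$, which are local to the boundary.

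The main obstacle is producing an embedded surface that is \emph{genuinely an annulus}, rather than merely an embedded surface of higher genus or with spurious closed components, while keeping the boundary framings undisturbed. Here I would argue that the resolution and compression preserve the relative homology class and the boundary, so the resulting connected embedded surface has boundary $K_1\cup K_3$ and can be compressed maximally; since $K_1$ and $K_3$ are smoothly isotopic through the concatenation, the maximally compressed surface is an annulus. The secondary technical point is bookkeeping of orientations so that the piercing count and the homological intersection number carry the same sign as in the statement, which is routine once the co-orientations of $A_1,A_2$ and the push-off direction are fixed consistently.
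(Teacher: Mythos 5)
Your proposal is correct and follows essentially the same route as the paper, whose entire proof is the single sentence ``Resolve the intersections of $A_1$ and $A_2$ to get an embedded annulus $A$''; your concatenation along $K_2$, oriented resolution of double curves, and push-off bookkeeping for the correction terms $K_1\cdot A_2$ and $K_3\cdot A_1$ are exactly the details the paper leaves implicit. If anything, your write-up supplies more justification than the original, particularly in tracking why each resolved piercing of a boundary knot through the other annulus inserts one signed full twist into the surface framing.
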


\begin{proof} 
Resolve the intersections of $A_1$ and $A_2$ to get an embedded annulus $A$. 
\end{proof}

\begin{lemma} 
Let $K_1$ and $K_2$ be Legendrian knots which cobound annuli $A_i$, respectively, with a Legendrian knot $J$ in a tight contact 3-manifold $(M,\xi)$. If $\widetilde{tb}_{A_1}(K_1,J)=\widetilde{tb}_{A_2}(K_2,J)$ and $\widetilde{r}_{A_1}(K_1,J)=\widetilde{r}_{A_2}(K_2,J)$, then $K_1$ and $K_2$ are Legendrian isotopic.
\end{lemma}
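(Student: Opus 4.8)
The plan is to reduce the statement to Theorem \ref{boundannulus} by producing a single embedded annulus between $K_1$ and $K_2$ on which both relative invariants vanish. First I would apply Lemma \ref{threeannuli} to the chain $K_1,J,K_2$ (with middle knot $J$): the annuli $A_1$ with $\partial A_1=K_1\cup J$ and $A_2$ with $\partial A_2=K_2\cup J$ resolve to an embedded annulus $A$ with $\partial A=K_1\cup K_2$, and the lemma records exactly how the induced Seifert framings change, namely $tw_{K_1}(\xi,Fr_A)=tw_{K_1}(\xi,Fr_{A_1})-K_1\cdot A_2$ and $tw_{K_2}(\xi,Fr_A)=tw_{K_2}(\xi,Fr_{A_2})-K_2\cdot A_1$ (the reference framings $Fr_1,Fr_3$ in Lemma \ref{threeannuli} drop out upon passing to the contact framing).

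Next I would compute $\widetilde{tb}_A(K_1,K_2)=tw_{K_1}(\xi,Fr_A)-tw_{K_2}(\xi,Fr_A)$ and substitute the two framing relations. Rewriting the hypothesis $\widetilde{tb}_{A_1}(K_1,J)=\widetilde{tb}_{A_2}(K_2,J)$ as $tw_{K_1}(\xi,Fr_{A_1})-tw_{K_2}(\xi,Fr_{A_2})=tw_J(\xi,Fr_{A_1})-tw_J(\xi,Fr_{A_2})$, the vanishing of $\widetilde{tb}_A(K_1,K_2)$ reduces to the identity $tw_J(\xi,Fr_{A_1})-tw_J(\xi,Fr_{A_2})=K_1\cdot A_2-K_2\cdot A_1$. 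This is where the real content lies: the push-off of $J$ into $A_i$ is isotopic through $A_i$ to $K_i$, so the difference of the two annulus framings on $J$ is the difference of the corresponding linking numbers, and a short homological computation (using that $A_i$ is a Seifert surface for $K_i\cup J$) shows this difference equals $K_1\cdot A_2-K_2\cdot A_1$. With the identity in hand, $\widetilde{tb}_A(K_1,K_2)=0$.

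For the rotation number I would argue by additivity of the relative Euler class under gluing surfaces along $J$: since $\widetilde{r}_A(K,J)=e(\xi,v_K\cup v_J)([A])$ and the directing vector fields agree on the common curve $J$, orienting $A_2$ so that its $J$-boundary cancels that of $A_1$ gives $\widetilde{r}_A(K_1,K_2)=\widetilde{r}_{A_1}(K_1,J)-\widetilde{r}_{A_2}(K_2,J)$, which is $0$ by hypothesis. Unlike the framing case there is no intersection correction here, since the relative Euler number is unchanged by the resolution that produces the embedded $A$. Finally, $(M,\xi)$ is tight and $K_1,K_2$ cobound the embedded annulus $A$ with $\widetilde{tb}_A(K_1,K_2)=\widetilde{r}_A(K_1,K_2)=0$, so Theorem \ref{boundannulus} supplies a global contact isotopy taking $K_1$ to $K_2$; in particular $K_1$ and $K_2$ are Legendrian isotopic.

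The main obstacle is the framing bookkeeping of the second paragraph: verifying that the two intersection corrections from Lemma \ref{threeannuli} precisely account for the discrepancy between the two Seifert framings that $A_1$ and $A_2$ induce on the shared knot $J$. One must also confirm that resolving the intersections leaves the relative rotation number additive, and keep careful track of orientations so that the antisymmetries $\widetilde{tb}_A(K,J)=-\widetilde{tb}_A(J,K)$ and $\widetilde{r}_A(K,J)=-\widetilde{r}_A(J,K)$ are applied consistently across the gluing along $J$.
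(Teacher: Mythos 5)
Your proposal is correct and takes essentially the same route as the paper, whose entire proof is the one-line instruction ``Apply Lemma \ref{trace} together with Lemma \ref{threeannuli}'': resolve $A_1$ and $A_2$ into a single embedded annulus $A$ with $\partial A=K_1\cup K_2$, verify via the framing corrections of Lemma \ref{threeannuli} that $\widetilde{tb}_A(K_1,K_2)=\widetilde{r}_A(K_1,K_2)=0$, and conclude by Theorem \ref{boundannulus} (proved via Lemma \ref{trace}). Your write-up simply makes explicit the framing and rotation-number bookkeeping that the paper leaves implicit.
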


\begin{proof} 
Apply Lemma \ref{trace} together with Lemma \ref{threeannuli}. 
\end{proof}

\section{Legendrian knots isotopic to the core in $(S^1\times D^2,\xi_n)$.} 

Let $K$ be isotopic to $J$. The generator of $\ker\big{(}H_1(S^1\times D^2\setminus K)\to H_1(S^1\times D^2)\big{)}$ is a curve $\mu$ so that the $0$-framing of $K$ in $S^1\times D^2$ is defined by $K'$ with $[K']=0\cdot\mu$, where $[K']$ is unique up to a choice for a generator of the other factor in $H_1(\partial (S^1\times D^2\setminus K))\cong \Z\oplus\Z$. A Legendrian $K$ has a twisting number $tb_n(K)$ defined as $lk(K,K')$ for a push-off $K'$ in the normal direction to the contact planes along $K$, so $tb_n(K)=lk(K,K')=m$, where $m$ is the unique integer with $[K']=m\cdot\mu$. Now embed $(S^1\times D^2,\xi_n)$ in $(S^3,\xi_{std})$ as the framed neighborhood of a Legendrian unknot $U$ with $tb(U)=-n=tb_n(J)$. Then $K$ is a Legendrian unknot in $(S^3,\xi_{std})$ smoothly isotopic to $U$. Note that $tb_n(K)=tb_{D_K}(K)$, where $\partial D_K=K$ in $S^3$. Similarly, the global trivialization of $\xi_n$ in $S^1\times D^2$ given by $\partial/\partial z$ gives the rotation number $r_n(K)$. After embedding $(S^1\times D^2,\xi_n)$ standardly into $(S^3,\xi_{std})$, we have $r_{D_K}(K)=r_n(K)$.

With this in mind, we classify Legendrian knots smoothly isotopic to the Legendrian core in $(S^1\times D^2,\xi_n)$ with equal $tb_n$ and $r_n$.

\begin{lemma} 
Let $K$ be isotopic to $J=S^1\times \{(0,0)\}$ in $(S^1\times D^2,\xi_n)$, $K\cap J=\emptyset$. Then there exists an embedded annulus $A\hookrightarrow S^1\times D^2$ with $\partial A=K\cup J$.
\end{lemma}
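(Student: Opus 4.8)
The plan is to put $K$ into a normal form as a section of the trivial $D^2$--bundle $S^1\times D^2\to S^1$ and then cone radially onto the core. Write $V=S^1\times D^2$ and fix a small tubular neighborhood $N(J)=S^1\times D^2_\epsilon$ of the core, chosen so that $K\cap N(J)=\emptyset$; this is possible since $K\cap J=\emptyset$ and $K$ is compact. The complement $W:=V\setminus\mathrm{int}\,N(J)$ is diffeomorphic to $T^2\times I$, and $K$ lies in its interior.

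First I would isotope $K$, by an ambient isotopy of $V$ supported in $W$ (so that $J$ and a neighborhood of it are fixed throughout), to a \emph{section} $s_\gamma=\{(\theta,\gamma(\theta)):\theta\in S^1\}$ for some smooth loop $\gamma\colon S^1\to D^2\setminus\{0\}$. Because the isotopy is supported away from $J$, it keeps $K$ disjoint from $J$ at every stage, so the limiting loop $\gamma$ misses the origin and $J$ itself is left untouched. That such a normal form exists is exactly where the hypothesis that $K$ be isotopic to the core is used: since $K$ is isotopic to the core of $V$, its exterior is again a product $T^2\times I$, and the standard theory of incompressible surfaces in $T^2\times I$ (Waldhausen; equivalently the classification of curves isotopic to a regular fiber of the trivial Seifert fibration of $V$) lets us isotope $K$ within $W$ to a horizontal curve, i.e. a section.

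With $K=s_\gamma$ in hand, the annulus is explicit: define $\Gamma\colon S^1\times[0,1]\to V$ by $\Gamma(\theta,t)=(\theta,t\,\gamma(\theta))$. Then $\Gamma(\cdot,1)=s_\gamma=K$ and $\Gamma(\cdot,0)=S^1\times\{(0,0)\}=J$. Since distinct values of $\theta$ land in distinct meridian fibers $\{\theta\}\times D^2$, and within a fiber the ray $t\mapsto t\,\gamma(\theta)$ is injective precisely because $\gamma(\theta)\neq 0$, the map $\Gamma$ is injective; a quick check of the differential at $t=0$ shows it is an immersion, so $\Gamma$ is an embedding of a compact annulus with $\partial\Gamma=K\cup J$. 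Finally, pulling $\Gamma$ back by the inverse of the normal-form isotopy (which fixes $J$ and is a diffeomorphism) produces an embedded annulus $A$ between the original $K$ and $J$, as desired.

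The main obstacle is the normal-form step: upgrading ``$K$ is isotopic to the core'' to ``$K$ is isotopic, rel a neighborhood of $J$, to a section.'' The content is that $K$ is unknotted and vertical in the fibered solid torus, so that the isotopy carrying it to the core can be arranged to avoid $J$ and to land on a genuine section rather than on $J$ itself; once this is established, the radial cone above makes embeddedness automatic, and the freedom in $\gamma$ accounts for the (a priori arbitrary) linking number of $K$ with $J$.
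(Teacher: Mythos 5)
Your radial cone in the second half is correct, but it is the easy direction; the gap is the normal-form step, and it is not a repairable technicality: as stated, that step is \emph{equivalent} to the lemma, so citing Waldhausen's theory of $T^2\times I$ begs the question. The hypothesis gives an isotopy from $K$ to the core \emph{in $V$}, which is free to sweep across $J$; nothing in the theory of incompressible surfaces in $T^2\times I$ converts such an isotopy into one supported in $W=V\setminus \mathrm{int}\,N(J)$. Indeed the two statements coincide: if an embedded annulus $A$ with $\partial A=K\cup J$ exists, then $A\setminus \mathrm{int}\,N(J)$ is swept by embedded circles and is itself an isotopy in $W$ from $K$ to a $(1,w)$-curve on $\partial N(J)$, i.e.\ to a section; conversely your cone produces $A$ from a section. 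Moreover, the implication you assert actually fails under the bare smooth hypothesis: let $K$ be a parallel copy of $J$ with one arc clasped around $J$ (two oppositely signed passes). Undoing the clasp is an isotopy in $V$ that crosses $J$, so this $K$ is isotopic to the core and disjoint from $J$; but under the standard embedding of $V$ into $S^3$ the link $K\cup J$ is a Whitehead link, whose components cobound no embedded annulus in $S^3$ (the boundary of an embedded annulus with unknotted core is a $(2,2n)$-torus link), hence none in $V$, and in particular this $K$ is not isotopic in $W$ to any section. The freedom in the winding of $\gamma$ that you invoke only accounts for homological linking, and the clasp example has the same homology data as a trivial pushoff.

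The paper's proof takes a different route and does not attempt your normal form: it embeds $(S^1\times D^2,\xi_n)$ into $(S^3,\xi_{std})$ as a framed neighborhood of a Legendrian unknot, uses that the images of $K$ and $J$ are then isotopic unknots in $S^3$ to isotope their spanning discs $D_K$ and $D_J$ until they coincide outside the solid torus and along a curve $\gamma\subset\partial(S^1\times D^2)$, and then resolves the interior intersections of the annuli $A_K=D_K\cap(S^1\times D^2)$ and $A_J=D_J\cap(S^1\times D^2)$ to obtain $A$; the extra input is the pair of spanning discs supplied by the ambient $S^3$, not a preferred position of $K$ in the torus complement. That said, your instinct that this step is the crux is sound: the clasp example above puts the same pressure on the resolution step of the paper's sketch, which indicates that the statement is implicitly meant for knots already normalized relative to $J$ (for instance via the unlinking isotopies of Proposition \ref{unlink} and the standing assumptions $tb_n(K)=tb_n(J)$, $r_n(K)=r_n(J)$ of that section). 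Any complete argument must invoke such additional structure; ``$K$ is isotopic to the core and disjoint from $J$'' alone does not suffice.
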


\begin{proof} 
Embed $(S^1\times D^2,\xi_n)$ in $(S^3,\xi_{std})$ as the framed neighborhood of a Legendrian unknot $U$ with $tb(U)=tb_n(J)=-n$, The images of $K$ and $J$ are isotopic in $S^3$, and so are the discs $D_K$ and $D_J$ that they bound. In particular, we can isotop them outside the interior of $S^1\times D^2$ so that $D_K$ and $D_J$ coincide in there and in a curve $\gamma\subset\partial(S^1\times D^2)$. Consider the annuli $A_K=D_K\cap(S^1\times D^2)$ and $A_J=D_J\cap(S^1\times D^2)$ and resolve their intersections away from $\gamma$ as in \cite{georgi} to obtain an embedded annulus $A$ with $\partial A=K\cup J$ and the framings along $K$ and $J$ change by the same number $lk_{A_J}(K,J)=K\cdot A_J=lk_{A_K}(J,K)=J\cdot A_K$. 
\end{proof}

\begin{lemma}\label{annulusequal} 
For $A\hookrightarrow S^1\times D^2$ above, we have $\widetilde{tb}_A(K,J)=0$ and $\widetilde{r}_A(K,J)=0$.
\end{lemma}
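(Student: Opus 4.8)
The plan is to show that the annulus $A$ constructed in the previous lemma has both relative invariants equal to zero by relating them to the invariants of the Legendrian unknot $U$ in $(S^3,\xi_{std})$ after the standard embedding $(S^1\times D^2,\xi_n)\hookrightarrow (S^3,\xi_{std})$. Since $K$ and $J$ are both smoothly isotopic to the core and hence to $U$, both become Legendrian unknots in $S^3$ bounding discs $D_K$ and $D_J$. The key observation is that $A$ was built by resolving the intersections of the two annuli $A_K=D_K\cap(S^1\times D^2)$ and $A_J=D_J\cap(S^1\times D^2)$, and the previous lemma already records that the Seifert framings along $K$ and $J$ change by the \emph{same} integer $lk_{A_J}(K,J)=K\cdot A_J=J\cdot A_K=lk_{A_K}(J,K)$ under this resolution.

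First I would unwind the definition $\widetilde{tb}_A(K,J)=tw_K(\xi_n,Fr_A)-tw_J(\xi_n,Fr_A)$. The contact twisting numbers $tw_K(\xi_n,Fr_{D_K})=tb_n(K)=tb_{D_K}(K)$ and $tw_J(\xi_n,Fr_{D_J})=tb_n(J)=tb_{D_J}(J)$ are both computed relative to the disc framings, and after the standard embedding these equal $tb(U)$ for the Legendrian unknot, hence $tb_{D_K}(K)=tb_{D_J}(J)=-n$. The framing change moving from $Fr_{D_K}$ to $Fr_A$ along $K$ is exactly $K\cdot A_J$, and the change from $Fr_{D_J}$ to $Fr_A$ along $J$ is exactly $J\cdot A_K$; since these are equal by the previous lemma, the two corrections cancel in the difference and we obtain $\widetilde{tb}_A(K,J)=tw_K(\xi_n,Fr_{D_K})-tw_J(\xi_n,Fr_{D_J})=(-n)-(-n)=0$.

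For the rotation number I would argue analogously using the global trivialization of $\xi_n$ given by $\partial/\partial z$, under which $\widetilde{r}_A(K,J)$ can be computed as $w_\sigma(v_K)-w_\sigma(v_J)$. Because both $K$ and $J$ are parallel copies of the core traversed in the same direction and the trivialization is global (independent of the bounding surface), the winding numbers of their tangent vector fields agree, so $\widetilde{r}_A(K,J)=r_n(K)-r_n(J)=0$. Alternatively, via the embedding into $S^3$, both $r_{D_K}(K)=r_n(K)$ and $r_{D_J}(J)=r_n(J)$ equal $r(U)$ for the Legendrian unknot, and the relative rotation number is the difference of Euler-class evaluations $e(\xi_n,v_K\cup v_J)([A])$ which vanishes because the trivialization extends over $A$.

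The main obstacle I expect is bookkeeping the framing corrections under the resolution of intersections carefully enough to confirm that the \emph{same} integer is subtracted on both the $K$ side and the $J$ side; this is precisely the content transported from the previous lemma, so the real work is verifying that the resolution described there does not introduce an asymmetry between the two boundary components and that passing between the disc framings and the annulus framing is consistent with the sign conventions for $tw$. Once that symmetry is in hand, both equalities reduce to the statement that $K$ and $J$ are isotopic copies of a single unknot with a common value of $tb_n$ and $r_n$, and the differences collapse to zero.
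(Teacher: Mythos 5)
Your framing bookkeeping is exactly the paper's: both proofs write $\widetilde{tb}_A(K,J)=\big(tb_n(K)+lk_{A_J}(K,J)\big)-\big(tb_n(J)+lk_{A_K}(J,K)\big)$ and cancel the two correction terms using the equality $K\cdot A_J=J\cdot A_K$ recorded in the preceding lemma, and both handle $\widetilde{r}_A$ via the global trivialization of $\xi_n$ by $\partial/\partial z$. But there is a genuine error in how you obtain the remaining equalities $tb_n(K)=tb_n(J)$ and $r_n(K)=r_n(J)$. You derive them from the smooth isotopy class: ``after the standard embedding these equal $tb(U)$ for the Legendrian unknot, hence $tb_{D_K}(K)=tb_{D_J}(J)=-n$,'' and likewise you describe $K$ and $J$ as ``parallel copies of the core traversed in the same direction.'' Neither claim is correct. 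Here $K$ is an arbitrary Legendrian knot that is merely \emph{smoothly} isotopic to the core $J$, not a parallel copy of it, and the Thurston--Bennequin and rotation numbers are not invariants of the smooth isotopy class: a stabilization of $K$ is still smoothly isotopic to the core but has $tb_n$ lowered by one --- this is precisely the content of the paper's earlier lemma that $\widetilde{tb}_n:\mathcal{L}_n(\kappa)\rightarrow\Z$ is unbounded below. Taken literally, your argument would prove that \emph{every} Legendrian knot smoothly isotopic to the core satisfies $\widetilde{tb}_A(K,J)=0$, contradicting that lemma.

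The repair is small but essential: the equalities $tb_n(K)=tb_n(J)$ and $r_n(K)=r_n(J)$ are the \emph{standing hypothesis} of this section (the stated goal is to ``classify Legendrian knots smoothly isotopic to the Legendrian core in $(S^1\times D^2,\xi_n)$ with equal $tb_n$ and $r_n$''), and the paper's proof simply invokes them (``since $tb_n(K)=tb_n(J)$''). Once you assume the classical invariants agree rather than attempting to deduce them from smooth isotopy, your cancellation of the framing corrections goes through verbatim, and your computation of $\widetilde{r}_A(K,J)=r_n(K)-r_n(J)=0$ in the global trivialization is exactly the paper's.
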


\begin{proof}
For Legendrian $K$ and $J$, we have $tb_A(K)=tb_A(J)$ since $tb_n(K)=tb_n(J)$ so $\widetilde{tb}_A(K,J)=tb_A(K)-tb_A(J)=(tb_n(K)+lk_{A_J}(K,J))-(tb_n(J)+lk_{A_K}(J,K))=0$. Similarly, $\widetilde{r}_A(K,J)=r_n(K)-r_n(J)=r_A(K)-r_A(J)=0$. Therefore $A$ traces out a Legendrian isotopy between $K$ and $J$, by Theorem \ref{trace}.
\end{proof}

\begin{remark} 
Recall we assumed in \cite{georgi} that the Legendrian isotopy crossing the reference knot $J$ was locally embedded. Lemma \ref{annulusequal} shows this assumption is justified. The converse is not generally true, and finding an embedded annulus which traces out a Legendrian (or even smooth) isotopy is not generally possible. It is a good problem to find the obstructions for an isotopy to be embedded.
\end{remark}

\section{Further classification results}

Let $K, J_1,\dots, J_m$ be Legendrian knots in a contact 3-manifold $(M,\xi)$ with Seifert surface $\Sigma$. The relative invariants $\widetilde{tb}_\Sigma(K,J_1\cup\cdots\cup J_m)\:=tw_K(\xi,Fr_\Sigma)-\sum_{k=1}^m tw_{J_k}(\xi,Fr_\Sigma)$ and $\widetilde{r}_\Sigma(K,J_1\cup\cdots\cup J_m)\:=\omega(v_K)-\sum_{k=1}^m\omega(v_{J_i})$ are well-defined (\cite{georgi}), in particular, they are invariant under Legendrian isotopy of $K$ which fixes the $J_i$ (\cite{georgi}).

\begin{lemma} 
Let  $J_i=S^1\times \{p_i\}, p_i\in D^2$ be $m$ parallel copies of $J=S^1\times \{(0,0)\}$ in $(S^1\times D^2,\xi_n)$ and let $K$ be a Legendrian knot with $K\cup J_1\cup\cdots\cup J_m=\partial\Sigma$.  The $J_i$ may be Legendrian isotoped so that $tw_{J_i}(\xi_n,Fr_\Sigma)=tw_{J}(\xi_n,Fr_{S^1\times D^2})=tb_n(J)=-n$  and $\widetilde{r}_\Sigma(K,J_1\cup\cdots\cup J_m)=r_n(K)-\sum_{i=1}^mr_n(J_i)$.
\end{lemma}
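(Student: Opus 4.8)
The plan is to treat the two assertions separately: normalizing the twisting numbers $tw_{J_i}(\xi_n,Fr_\Sigma)$ by an unlinking isotopy (in the spirit of Proposition~\ref{unlink}), and then deriving the additivity of the relative rotation number by comparing two trivializations of $\xi_n|_\Sigma$. For the twisting numbers I would first measure the discrepancy between the Seifert framing $Fr_\Sigma$ and the product framing $Fr_{S^1\times D^2}$ along each $J_i$. Pushing $J_i$ into $\Sigma$ gives a longitude $J_i'$ whose class in $H_1(S^1\times D^2\setminus J_i)\cong\Z\mu_i\oplus\Z\lambda$ is $[K]-\sum_{j\neq i}[J_j]$, since $\Sigma$ with a collar of $J_i$ deleted is a surface in the complement of $J_i$ with boundary $J_i'$ together with the remaining boundary curves. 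As each parallel copy satisfies $[J_j]=\lambda$ and $[K]=lk(K,J_i)\,\mu_i+m\lambda$, the $\mu_i$-coefficient of $[J_i']$ is exactly $lk(K,J_i)$. Thus $Fr_\Sigma$ and $Fr_{S^1\times D^2}$ differ by $lk(K,J_i)$ meridional twists, so $tw_{J_i}(\xi_n,Fr_\Sigma)=tw_{J_i}(\xi_n,Fr_{S^1\times D^2})-lk(K,J_i)=-n-lk(K,J_i)$, using that $\xi_n$ makes $n$ left-handed twists relative to the product framing along every parallel copy of the core.

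It then remains to arrange $lk(K,J_i)=0$ for every $i$. Since $K$ is compact it lies in $S^1\times\{x^2+y^2\le r'\}$ for some $r'<1$, so each $J_i=S^1\times\{p_i\}$ can be Legendrian isotoped through parallel copies of the core---all of which are Legendrian---to a curve $S^1\times\{q_i\}$ with the $q_i$ distinct and $|q_i|>r'$. This isotopy carries $J_i$ transversely across $K$, which is exactly the unlinking isotopy of Lemma~\ref{unlinkonce}. In the new position the radial annulus $A_i$ running from $J_i$ out to $\partial(S^1\times D^2)$ misses $K$, so $lk(K,J_i)=K\cdot A_i=0$, and therefore $tw_{J_i}(\xi_n,Fr_\Sigma)=-n=tw_{J}(\xi_n,Fr_{S^1\times D^2})=tb_n(J)$.

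For the relative rotation number I would compare the Seifert trivialization $\sigma$ of $\xi_n|_\Sigma$ with the \emph{global} trivialization $\tau$ given by $\{\partial/\partial z,\ \cos(2\pi nz)\,\partial/\partial x-\sin(2\pi nz)\,\partial/\partial y\}$, with respect to which the winding of a tangent field is by definition $r_n$. Along each boundary curve $C$ we have $w_\sigma(v_C)=r_n(C)+\delta(C)$, where $\delta(C)=\deg(g|_C)$ and $g:\Sigma\to SO(2)$ is the change-of-trivialization map from $\tau$ to $\sigma$, defined on all of $\Sigma$. Since $[\partial\Sigma]=0$ in $H_1(\Sigma)$ we get $g_*[\partial\Sigma]=0$, i.e. $\sum_{C\subset\partial\Sigma}\delta(C)=0$; with the boundary orientation $\partial\Sigma=K-\sum_i J_i$ this reads $\delta(K)-\sum_i\delta(J_i)=0$. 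Substituting $w_\sigma(v_C)=r_n(C)+\delta(C)$ into $\widetilde{r}_\Sigma(K,J_1\cup\cdots\cup J_m)=w_\sigma(v_K)-\sum_i w_\sigma(v_{J_i})$ then cancels the $\delta$-terms and yields $\widetilde{r}_\Sigma(K,J_1\cup\cdots\cup J_m)=r_n(K)-\sum_i r_n(J_i)$.

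The main obstacle I anticipate is the first step: fixing the sign and value of the framing discrepancy and confirming that the unlinking isotopy behaves as intended. One has to verify that isotoping $J_i$ across $K$ (rather than $K$ across $J_i$, as literally stated in Lemma~\ref{unlinkonce}) changes only the individual $tw_{J_i}(\xi_n,Fr_\Sigma)$ by the crossing, and that afterwards the $J_i$ remain mutually unlinked parallel copies of the core, so that $[J_j]=\lambda$ in each $H_1(S^1\times D^2\setminus J_i)$. By contrast the rotation-number computation is routine once the boundary orientation $\partial\Sigma=K-\sum_i J_i$ is tracked correctly, since it is this orientation that forces the relative Euler class of the globally trivialized $\xi_n|_\Sigma$ to vanish.
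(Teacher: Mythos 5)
Your proposal is correct and follows essentially the same route as the paper: the paper's proof invokes the unlinking Legendrian isotopy of Proposition \ref{unlink} to make the Seifert framing on each $J_i$ coincide with the product framing (giving $tw_{J_i}(\xi_n,Fr_\Sigma)=-n$), and handles the relative rotation number exactly by your observation that $\xi_n$ is globally trivial, so the change-of-trivialization degrees cancel over $\partial\Sigma$. Your homological computation of the framing discrepancy $lk(K,J_i)$ and the explicit isotopy of the $J_i$ through parallel Legendrian copies of the core are simply a fleshed-out version of the same argument, and they resolve correctly the worry you raise at the end (moving $J_i$ across $K$ rather than vice versa is the first half of the construction underlying Proposition \ref{unlink}).
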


\begin{proof} 
The method of unlinking Legendrian isotopy (Proposition \ref{unlink}) allows us to unlink $J_i$ from $K$ so that the Seifert framing and the product framing coincide, without changing the relative invariants (for the invariance of the relative rotation number, note that the contact structure is globally trivial). Then $tw_{J_i}(\xi_n,Fr_\Sigma)=tw_{J}(\xi_n,Fr_{S^1\times D^2})=-n$ so $\widetilde{tb}_\Sigma(K,J_1\cup\cdots\cup J_m)=tw_K(\xi_n,Fr_\Sigma)-mn$ and $\widetilde{r}_\Sigma(K,J_1\cup\cdots\cup J_m)=\omega(v_K)-\sum_{i=1}^m\omega(v_{J_i})=\omega(v_K)-m\omega(v_J)$.
\end{proof}

\begin{remark} 
Let $K$ be a Legendrian knot  which cobounds an $m$-punctured 2-disc $D$ with a collection of $m$ Legendrian copies of $J$ in $(S^1\times D^2,\xi_n)$. Then if $m$ is odd, $K$ is smoothly isotopic to $J$ and if $m$ is even then $K$ is homotopically trivial.
\end{remark}

\begin{lemma} 
Let $K_1$ and $K_2$ be two Legendrian knots each cobounding an $m$-punctured 2-disc $D_i$ with $m$ copies of $J$ in $(S^1\times D^2,\xi_n)$. Assume $\widetilde{tb}_{D_1}(K_1,J_1'\cup\cdots\cup J_m')=\widetilde{tb}_{D_2}(K_2,J_1''\cup\cdots\cup J_m'')$ and $\widetilde{r}_{D_1}(K_1,J_1'\cup\cdots\cup J_m')=\widetilde{r}_{D_2}(K_2,J_1''\cup\cdots\cup J_m'')$. Then if $m$ is odd, there exists an embedded annulus $A$ with $\partial A=K_1\cup K_2$ such that $\widetilde{tb}_A(K_1,K_2)=0$ and $\widetilde{r}_A(K_1,K_2)=0$. If $m$ is even, there exists an embedded 2-dsic $D_i$ with $K_i=\partial D_i$ such that $tb_{D_1}(K_1)=tb_{D_2}(K_2)$ and $r_{D_1}(K_1)=r_{D_2}(K_2)$.
\end{lemma}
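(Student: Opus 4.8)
The plan is to split into the two parities using the preceding remark, which pins down the smooth type of each $K_i$: for $m$ odd each $K_i$ is smoothly isotopic to the core $J$, and for $m$ even each $K_i$ is homotopically trivial. In both cases the strategy is the same two-step one: first produce the required embedded surface (the annulus $A$, resp.\ the spanning disc) purely smoothly, and then establish the asserted values of its relative (resp.\ absolute) invariants by reducing the hypotheses on the punctured-disc invariants to an equality of the intrinsic invariants $tb_n(K_i),r_n(K_i)$ in the odd case, or $tb(K_i),r(K_i)$ in the even case.

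For $m$ odd, since $K_1$ and $K_2$ are each smoothly isotopic to $J$, I would run the embedding argument from the core-isotopy lemma: embed $(S^1\times D^2,\xi_n)$ into $(S^3,\xi_{std})$ as a framed neighborhood of a Legendrian unknot, observe that $K_1,K_2$ bound discs in $S^3$, isotope these to agree outside the solid torus, and resolve the remaining intersections inside to obtain an embedded annulus $A$ with $\partial A=K_1\cup K_2$ (alternatively, build annuli from each $K_i$ to $J$ and splice them with Lemma \ref{threeannuli}). It then remains to show $\widetilde{tb}_A(K_1,K_2)=0$ and $\widetilde{r}_A(K_1,K_2)=0$. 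As in Lemma \ref{annulusequal}, these equal $tb_n(K_1)-tb_n(K_2)$ and $r_n(K_1)-r_n(K_2)$ up to framing corrections that the resolution construction makes common to the two knots, so the task reduces to $tb_n(K_1)=tb_n(K_2)$ and $r_n(K_1)=r_n(K_2)$. For this I would apply Proposition \ref{unlink} together with the preceding lemma on parallel copies of $J$ to arrange $tw_{J_k}(\xi_n,Fr_{D_i})=-n$ for every copy; then both hypotheses have identical $J$-contributions (a common term $mn$ in the $\widetilde{tb}$ equation and a common term $\sum_k\omega(v_{J_k})$ in the $\widetilde{r}$ equation), and these cancel to leave $tw_{K_1}(\xi_n,Fr_{D_1})=tw_{K_2}(\xi_n,Fr_{D_2})$ and $\omega(v_{K_1})=\omega(v_{K_2})$. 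After the unlinking these twisting and winding quantities are exactly $tb_n(K_i)$ and, via the global trivialization $\partial/\partial z$, $r_n(K_i)$.

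For $m$ even, each $K_i$ is homotopically trivial, and (being smoothly unknotted in the ambient $S^3$) bounds an embedded disc inside $S^1\times D^2$; I would obtain this disc by taking the spanning disc in $S^3$ and pushing it off the core into the solid torus, using the null-homotopy together with an innermost-disc argument on $\partial(S^1\times D^2)$ to keep it embedded. The same cancellation of the common $J$-contributions as above reduces the two hypotheses to $tb(K_1)=tb(K_2)$ and $r(K_1)=r(K_2)$, which are precisely $tb_{D_1}(K_1)=tb_{D_2}(K_2)$ and $r_{D_1}(K_1)=r_{D_2}(K_2)$ for the embedded discs, since these are genuine Seifert discs.

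The main obstacle is the framing bookkeeping, and it has two facets. First, I must verify that after the unlinking isotopy the punctured-disc framing $Fr_{D_i}$ on $K_i$ genuinely agrees with the standard framing (the $0$-framing for odd $m$, the Seifert-disc framing for even $m$), so that $tw_{K_i}(\xi_n,Fr_{D_i})$ equals $tb_n(K_i)$ (resp.\ $tb(K_i)$) and the correction terms $K\cdot A$ of Lemma \ref{threeannuli} come out equal for $K_1$ and $K_2$. Second, in the even case I must ensure the spanning disc can be isotoped to lie embedded inside the solid torus rather than merely in $S^3$. Once these framings are pinned down, the vanishing of $\widetilde{tb}_A$ and $\widetilde{r}_A$ in the odd case and the equalities of $tb_{D_i},r_{D_i}$ in the even case follow exactly as in Lemma \ref{annulusequal}.
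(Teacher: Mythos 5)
Your odd case and your reduction of the hypotheses are essentially the paper's: the paper likewise first applies Proposition \ref{unlink} (with the preceding lemma on parallel copies) to normalize $tw_{J_i'}(\xi_n,Fr_{D_1})=tw_{J_j''}(\xi_n,Fr_{D_2})=-n$, so the $J$-contributions cancel and the hypotheses become $tw_{K_1}(\xi_n,Fr_{D_1})=tw_{K_2}(\xi_n,Fr_{D_2})$ and $\omega(v_{K_1})=\omega(v_{K_2})$; your insistence on checking that $Fr_{D_i}$ agrees with the $0$-framing after unlinking is exactly the point that converts these into $tb_n(K_1)=tb_n(K_2)$, $r_n(K_1)=r_n(K_2)$. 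Where you diverge is the construction of $A$: the paper does not reuse the $S^3$-disc construction of Lemma \ref{annulusequal}, but instead Legendrian isotops each $J_i'$ onto a $J_j''$ through parallel copies of the core (choosing the matching to avoid circle intersections), glues the punctured discs into $\Sigma=D_1\cup D_2$, and extracts the annulus from the discs cobounded by $K_1\cup K_2$ and the arc intersections $\alpha_k$. For $m$ odd your alternative (discs in $S^3$ resolved inside the solid torus, or splicing through $J$ via Lemma \ref{threeannuli}) is legitimate, since each $K_i$ is smoothly isotopic to the core and the framing identity $K_1\cdot A_{K_2}=K_2\cdot A_{K_1}$ is the same one recorded in Section 8.

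The even case, however, contains a genuine gap. You infer that $K_i$ bounds an embedded disc in $S^1\times D^2$ from null-homotopy there plus unknottedness in $S^3$. Neither step holds as stated. First, unknottedness in $S^3$ is not free: capping $D_i$ with the disjoint discs bounded by the $0$-framed unlink $J_1'\cup\cdots\cup J_m'$ gives only a singular disc (the caps may meet $D_i$), and resolving those intersections can raise genus, so at this stage $K_i$ is merely null-homotopic. Second, and more fundamentally, the implication ``null-homotopic in the solid torus and unknotted in $S^3$ $\Rightarrow$ bounds an embedded disc in the solid torus'' is false: the Whitehead curve $W\subset S^1\times D^2$ has winding number $0$ (hence is null-homotopic) and is unknotted in $S^3$, yet bounds no embedded disc in the solid torus --- such a disc would put $W$ in a $3$-ball disjoint from the core, splitting the (non-split) Whitehead link. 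Dehn's lemma \cite{papi} does not rescue the innermost-disc argument you propose on $\partial(S^1\times D^2)$, since $K_i$ does not lie on the boundary of the manifold. What excludes Whitehead-type behavior is precisely the structure your even case discards: the embedded planar surface $D_i$ cobounded with parallel copies of the core. The paper keeps it --- after matching the $J_i'$ with the $J_j''$, the even-case conclusion is read off the glued surface $\Sigma=D_1\cup D_2$ (oppositely oriented parallel copies of $J$ cobound annuli that surger the punctured discs down) --- and any repair of your argument must likewise use $D_i$ itself rather than only the homotopy class of $K_i$.
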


\begin{proof} 
We use Proposition \ref{unlink} to isotop all $J_i'$ and $J_j''$ to a neighborhood $N(\widetilde{J})$ of a copy of $J$ with the $K_i\subset(S^1\times D^2)\setminus N(\widetilde{J})$. The relative invariants are fixed and $tw_{J_i'}(\xi_n,Fr_{D_1})=tw_{J_j''}(\xi_n,Fr_{D_2})=tb_n(J)=-n$ and $\widetilde{r}_{D_1}(J_i')=r_n(J_i')=\widetilde{r}_{D_2}(J_j'')$. Now Legendrian isotop each $J_i'$ to a $J_j''$ through parallel copies of $J$. This may create circle intersections between $D_1$ and $D_2$, but there is an arrangement of $J_i'$ getting mapped to the $J_j''$ which avoids all circle intersections. Also, $tw_{J_i'}(\xi_n,Fr_{D_1})=-n=tw_{J_j''}(\xi_n,Fr_{D_2})$ implies that $tw_{K_1}(\xi_n,Fr_{D_1})=tw_{K_2}(\xi_n,Fr_{D_2})$ and similarly $r_n(K_1)=\omega(v_{K_1})=r_n(K_2)=\omega(v_{K_2})$. After all circles are eliminated, $\Sigma=D_1\cup D_2$ has no self-intersections near the $J_i'=J_i''$ and arc self-intersections $\alpha_k$ running only from $K_1$ to $K_2$ and possibly some other circle intersections. If $m$ is odd, then $K_1$ and $K_2$ are smoothly isotopic to the core in $S^1\times D^2$ and are thus smoothly isotopic, with the union of the $K_i$ and $\alpha_k$ cobounds a collection of disjoint 2-discs in $S^1\times D^2$, whose union is an annulus $A$ with $K_1\cup K_2=\partial A$. If $m$ is even, then $K_1$ and $K_2$ are (trivial and therefore) isotopic in $S^1\times D^2$, and the above annulus traces out the Legendrian isotopy between them.
\end{proof}

\begin{theorem} 
Let $K_1$ and $K_2$ be two smoothly isotopic Legendrian knots each cobounding an $m$-punctured 2-disc $D_i$ with $m$ copies of $J$ in $(S^1\times D^2,\xi_n)$. If $\widetilde{tb}_{D_1}(K_1,J_1'\cup\cdots\cup J_m')=\widetilde{tb}_{D_2}(K_2,J_1''\cup\cdots\cup J_m'')$ and $\widetilde{r}_{D_1}(K_1,J_1'\cup\cdots\cup J_m')=\widetilde{r}_{D_2}(K_2,J_1''\cup\cdots\cup J_m'')$, then $K_1$ and $K_2$ are Legendrian isotopic.
\end{theorem}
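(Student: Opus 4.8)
The plan is to dispatch the theorem by feeding the output of the preceding lemma into the two global results already available, splitting on the parity of $m$. That lemma converts the numerical hypotheses $\widetilde{tb}_{D_1}(K_1,\cdots)=\widetilde{tb}_{D_2}(K_2,\cdots)$ and $\widetilde{r}_{D_1}(K_1,\cdots)=\widetilde{r}_{D_2}(K_2,\cdots)$ into geometric data: when $m$ is odd it produces an embedded annulus $A$ with $\partial A=K_1\cup K_2$ and $\widetilde{tb}_A(K_1,K_2)=\widetilde{r}_A(K_1,K_2)=0$, while when $m$ is even it produces embedded discs $D_i$ with $K_i=\partial D_i$, $tb_{D_1}(K_1)=tb_{D_2}(K_2)$ and $r_{D_1}(K_1)=r_{D_2}(K_2)$. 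In each case the remaining task is purely to recognize these data as the hypotheses of a known classification.

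For $m$ odd I would simply apply Theorem \ref{boundannulus} with $(M,\xi)=(S^1\times D^2,\xi_n)$, $K=K_1$, $J=K_2$, and the annulus $A$ just produced: since $\xi_n$ is tight and $\widetilde{tb}_A(K_1,K_2)=\widetilde{r}_A(K_1,K_2)=0$, that theorem yields a global contact isotopy of $(S^1\times D^2,\xi_n)$, identity on the boundary, carrying $K_1$ to $K_2$, which is in particular a Legendrian isotopy. This case carries no real difficulty beyond invoking Theorem \ref{boundannulus}.

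For $m$ even each $K_i$ bounds the embedded disc $D_i$ and is therefore a Legendrian unknot, and the two unknots share the same Thurston--Bennequin and rotation numbers computed relative to their discs. Here I would invoke the Eliashberg--Fraser classification of the Legendrian unknot, which asserts that a disc-bounding Legendrian knot in a tight contact $3$-manifold is determined up to Legendrian isotopy by the pair $(tb,r)$; equal invariants then force $K_1$ and $K_2$ to be Legendrian isotopic. To square this with the invariants as defined in $(S^1\times D^2,\xi_n)$ one passes through the standard embedding $(S^1\times D^2,\xi_n)\hookrightarrow(S^3,\xi_{std})$ as the framed neighborhood of a Legendrian unknot $U$ with $tb(U)=-n$, used in the preceding sections, under which $tb_{D_i}$ and $r_{D_i}$ coincide with the genuine invariants of the unknots $K_i$ in $(S^3,\xi_{std})$.

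The one real obstacle lives in this even case: the unknot classification most naturally produces a Legendrian isotopy in the ambient $S^3$, whereas the theorem asks for one inside $S^1\times D^2$. I would resolve this by localization. Each $K_i$, being the boundary of an embedded disc in the tight manifold $S^1\times D^2$, is contained in a standard contact ball $B_i\subset S^1\times D^2$ (a neighborhood of a convexified $D_i$, tight and hence standard by Eliashberg's uniqueness of tight contact structures on the ball). Since any two embedded balls in the connected manifold $S^1\times D^2$ are ambient isotopic, I first move $B_1$ onto $B_2$ by a contact isotopy supported in $S^1\times D^2$, and then run the unknot classification inside a single standard ball, so that the entire Legendrian isotopy stays within $S^1\times D^2$. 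With both parities handled, the theorem follows.
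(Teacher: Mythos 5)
Your proposal is correct and follows essentially the paper's own (implicit) route: the theorem is stated there as an immediate consequence of the preceding lemma, with the odd case closed by Theorem \ref{boundannulus} and the even case by the classification of topologically trivial Legendrian knots (Eliashberg--Fraser \cite{yashafraser}), exactly as you argue. Your extra localization step in the even case is a harmless refinement, since the Eliashberg--Fraser classification holds in any tight contact 3-manifold and so applies directly in $(S^1\times D^2,\xi_n)$.
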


This can be translated to links in $(S^3,\xi_{std})$ by considering $J$ as a trivial link component. Consider two smoothly isotopic $(m+1)-component$ Legendrian links $L_1=K_1\cup J_1\cup J_2\cup\cdots\cup J_m$ and $L_2=K_2\cup J_1'\cup J_2'\cup\cdots\cup J_m'$ in $(S^3,\xi_{std})$ consisting only of trivial components and with Seifert surfaces $D_1,D_2$ which are $m$-punctured 2-discs such that $K_i$ bounds a 2-disc whose interior is disjoint from $D_i$ for $i=1,2$. Now take a Legendrian unknot $U_i$ that links with $L_i$ so that $lk_{D_1}(U_1,J_k)=lk_{D_2}(U_2,J_k')=\pm1$ so the new links $L_i\cup U_i$ are smoothly isotopic. Then after appropriate stabilizations of some or all of their components, the links $L_1\cup U_1$ and $L_2\cup U_2$ are Legendrian isotopic. To see this, take a framed Legendrian neighborhood of $U_i$, such that the $J_i$ are meridians, isotop the $U_i$ to coincide, and extend to a global contact isotopy. Then the complement of the now single solid torus is a solid torus to which the above theorem applies.

\section{Connected sums and Legendrian simple knots.}

Etnyre and Honda \cite{etnyrehonda:knots2} connected sums of Legendrian simple knots extensively. We are interested in the relative version of their results. Let $\mathfrak{L}$ be the set of knot types whose Legendrian embeddings in $(M_1,\xi_1)$ are Legendrian simple (i.e., classified by their classical invariants). Let $\mathfrak{L}'$ be the set of knot types $K_1\# J$, where $K_1\in\mathfrak{L}$ and $J$ is a Legendrian knot in a tight contact 3-manifold $(M_2,\xi_2)$. Consider an embedded annulus $A$ in $M_2$ bounded by $J$ and another Legendrian knot $K_2$, so $K_1\# K_2$ in $(M_1, \xi_1)\#(M_2,\xi_2)$ is homologous to $J$. For a Legendrian representative $K$ of $K_1\# K_2$ in $(M,\xi)$, the relative invariants $\widetilde{tb}(K, J)$ and $\widetilde{r}(K, J)$ are well-defined (??? and Theorem \ref{colin}). Let $\mathfrak{L}''\subset \mathfrak{L}'$ denote knot types in $\mathfrak{L}'$ whose Legendrian representatives in $(M,\xi)$ are classified by the relative invariants with respect to $J$ (call them \textit{relatively Legendrian simple with respect to $J$}).

\begin{question} 
When does the relative connected sum give a bijection $\mathfrak{L}\iff\mathfrak{L}''$? What are the obstructions, and when is the map only injective or surjective?
\end{question}

Etnyre-Honda \cite{etnyrehonda:knots2} constructed connected sums of Legendrian simple knots in $S^3$ which are not Legendrian simple so $\mathfrak{L}\rightarrow\mathfrak{L}''$ is not always a bijection. 

\begin{lemma} 
There is a bijective correspondence between $\mathfrak{L}$ and $\mathfrak{L}''$ in the case when $(M_i,\xi_i)=(S^3,\xi_{std}),\ i=1,2$ and $J\subset(M_2,\xi_2)$ is the unknot.
\end{lemma}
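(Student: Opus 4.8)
The plan is to realize the correspondence as the relative connected-sum map with the fixed reference pair $(K_2,J)$, and to show that when $J$ is the unknot this map reduces the relative classification to the classical one. Write $\Phi\colon\mathfrak{L}\to\mathfrak{L}''$ for $\kappa_1\mapsto \kappa_1\#\kappa_2$, where $\kappa_2$ is the knot type of $K_2$. Since $K_2$ cobounds an embedded annulus with the unknot $J$, the transitivity remark shows $K_2$ is itself an unknot, so $\kappa_2$ is the unknot type and $\kappa_1\#\kappa_2=\kappa_1$ as a smooth knot type; likewise Colin's theorem (Theorem \ref{colin}) identifies $(S^3,\xi_{std})\#(S^3,\xi_{std})$ with $(S^3,\xi_{std})$, so the ambient contact manifold is unchanged and every Legendrian representative $K$ of $\kappa_1\#\kappa_2$ is simply a Legendrian knot of smooth type $\kappa_1$ in $(S^3,\xi_{std})$, taken homologous to $J$.

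First I would reduce the relative invariants to classical ones. Given such a $K$, apply an unlinking Legendrian isotopy (Proposition \ref{unlink}, in the form of Lemma \ref{unlinkonce}, adapted to $(S^3,\xi_{std})$ with $J$ the unknot) so that $K$ and $J$ are unlinked and cobound a Seifert surface $\Sigma'$. Because $J$ bounds a disc $D_J$ disjoint from $K$, the framing $Fr_{\Sigma'}$ restricts on $J$ to the disc framing, so $tw_J(\xi,Fr_{\Sigma'})=tb(J)$; and since $K$ is unlinked from $J$ its induced framing is the Seifert framing of $K$, giving $tw_K(\xi,Fr_{\Sigma'})=tb(K)$. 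Hence
\[
\widetilde{tb}(K,J)=tb(K)-tb(J),\qquad \widetilde{r}(K,J)=r(K)-r(J),
\]
so the relative invariants differ from the classical invariants of $K$ only by the constants $tb(J),r(J)$, which are fixed once and for all by the reference $J$.

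Next I would show $\Phi$ lands in $\mathfrak{L}''$. If $K,K'$ are representatives of $\kappa_1\#\kappa_2$ with $\widetilde{tb}(K,J)=\widetilde{tb}(K',J)$ and $\widetilde{r}(K,J)=\widetilde{r}(K',J)$, the displayed identities force $tb(K)=tb(K')$ and $r(K)=r(K')$. Since $\kappa_1\in\mathfrak{L}$ is Legendrian simple, $K$ and $K'$ are then Legendrian isotopic in $(S^3,\xi_{std})$; invoking Lemma \ref{unlinkonce} to absorb any crossings of $J$ promotes this to a relative Legendrian isotopy. (Equivalently, one may split off the unknot summand $J$ via Lemma \ref{split}, using $tb(\kappa_1\#\kappa_2)=tb(\kappa_1)+tb(\kappa_2)+1$ and additivity of $r$, match invariants of the resulting $\kappa_1$-representatives, and reassemble.) Thus $\kappa_1\#\kappa_2$ is relatively Legendrian simple, so $\Phi(\kappa_1)\in\mathfrak{L}''$ and in fact $\mathfrak{L}'=\mathfrak{L}''$ in this case. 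Finally $\Phi$ is injective, since $\kappa_2$ is the unknot and $\Phi(\kappa_1)=\kappa_1$ recovers $\kappa_1$, and surjective, since every element of $\mathfrak{L}''\subseteq\mathfrak{L}'$ is by definition $\kappa_1\#J$ with $\kappa_1\in\mathfrak{L}$; the splitting of Lemma \ref{split} is the inverse.

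The main obstacle is the compatibility step: verifying that the classical Legendrian isotopy of the $\kappa_1$-representatives can be promoted to a genuine \emph{relative} isotopy of $K$ and $K'$ past the reference $J$, i.e.\ that splitting and reassembling interact cleanly with the class of isotopies allowed to cross $J$. This is precisely where the hypothesis that $J$ is the unknot is essential: it is what prevents the Etnyre–Honda failure of Legendrian simplicity for general connected sums, since no genuine mixing of connect-sum decompositions of $K$ can take place along a trivial summand.
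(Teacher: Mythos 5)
Your route is genuinely different from the paper's. The paper never proves this lemma directly: it remarks that the lemma is ``fairly straightforward'' and instead derives it as the special case of Lemma \ref{ministr}, whose proof goes through the connected-sum machinery --- splitting along a separating sphere, using a contact isotopy to make the separating spheres and standard $3$-balls coincide, and then piecing together Legendrian isotopies of the summands (with the caveat, recorded in the Remark after Lemma \ref{ministr}, that one may need to redistribute stabilizations). You instead collapse the relative data onto the classical data: since $K_2$ cobounds an annulus with the unknot $J$, the summand $\kappa_2$ is trivial, the ambient manifold stays $(S^3,\xi_{std})$ by Theorem \ref{colin}, and your framing computation gives $\widetilde{tb}(K,J)=tb(K)-tb(J)$ and $\widetilde{r}(K,J)=r(K)-r(J)$, after which classical Legendrian simplicity of $\kappa_1$ does all the work. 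This is more elementary and buys something the paper's route does not make explicit: it shows why the stabilization-distribution caveat is vacuous when the reference summand is trivial, since no genuine splitting and reassembly occurs. In fact your identity holds for \emph{any} Seifert surface $\Sigma$ with $\partial\Sigma=K\cup J$, without first unlinking: the push-off of $K$ into $\Sigma$ represents $lk(J,K)$ times the meridian in $H_1(S^3\setminus K)$, and symmetrically for $J$, so the linking contributions to $tw_K(\xi,Fr_\Sigma)$ and $tw_J(\xi,Fr_\Sigma)$ cancel in the difference; this would let you skip the adaptation of Proposition \ref{unlink} to $S^3$ entirely.

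The one genuine soft spot is the step you yourself flag: ``invoking Lemma \ref{unlinkonce} to absorb any crossings of $J$ promotes this to a relative Legendrian isotopy.'' Lemma \ref{unlinkonce} is an \emph{invariance} statement --- transverse crossings of $J$ do not change $\widetilde{tb}$ --- and it is stated only in $(S^1\times D^2,\xi_n)$ and only for $\widetilde{tb}$ (crossing-invariance of $\widetilde{r}$ comes instead from the global triviality of $\xi_{std}$, the device the paper uses in Section 9). It does not convert an ambient Legendrian isotopy of $K$ that moves $J$ around, or crosses it, into one compatible with the fixed reference $J$. If ``classified by the relative invariants with respect to $J$'' is read with $J$ held fixed (as the paper's own proof of Lemma \ref{ministr} suggests, where the $J_i'$ and the standard balls are first aligned by a contact isotopy and only then is the classical isotopy of the $K_i'$ run in the complement), then your proof is missing exactly that alignment step: you need a contact isotopy returning $J$ to itself --- available here because $J$ is a Legendrian unknot, so the classification of topologically trivial Legendrian knots \cite{yashafraser} applies --- before the classical simplicity argument closes the loop. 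As written, the citation of Lemma \ref{unlinkonce} at this point does not carry the weight assigned to it, and your closing heuristic about ``no mixing along a trivial summand'' should be replaced by this concrete alignment argument.
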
 

This lemma is fairly straightforward to prove. We prove a generalization below.

\begin{lemma} \label{ministr}
There is a bijective correspondence between $\mathfrak{L}$ and $\mathfrak{L}''$ for $(M_1,\xi_1)=(S^3,\xi_{std}),\ (M_2,\xi_2)=(S^1\times D^2,\xi_n)$, and $J\subset(S^1\times D^2,\xi_n)$ is the Legendrian core.
\end{lemma}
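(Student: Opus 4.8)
The plan is to show that the relative connected-sum map $\Phi\colon\mathfrak{L}\to\mathfrak{L}'$, $\kappa_1\mapsto\kappa_1\#J$ (with $J$ the Legendrian core), restricts to a bijection onto $\mathfrak{L}''$. Injectivity of $\Phi$ at the level of smooth knot types is the uniqueness of the local-summand decomposition of a knot in $S^1\times D^2$ whose homology class is that of the core: the $S^3$-summand $\kappa_1$ is recovered by cutting along a splitting sphere, so $\kappa_1\#J=\kappa_1'\#J$ forces $\kappa_1=\kappa_1'$. Since the image of $\Phi$ is exactly $\mathfrak{L}'$ and $\mathfrak{L}''\subseteq\mathfrak{L}'$, the entire content is to prove $\mathfrak{L}''=\mathfrak{L}'$, that is, that \emph{every} knot type $\kappa_1\#J$ with $\kappa_1\in\mathfrak{L}$ is relatively Legendrian simple with respect to $J$. (The hypothesis $n\geq1$ guarantees the standard model below exists, since an unknot with $tb=-n\leq-1$ is realized after stabilization.)

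Fix two Legendrian representatives $K,K'$ of $\kappa_1\#J$ in $(S^1\times D^2,\xi_n)$ with $\widetilde{tb}_n(K,J)=\widetilde{tb}_n(K',J)$ and $\widetilde{r}_n(K,J)=\widetilde{r}_n(K',J)$; I must produce a contact isotopy carrying $K$ to $K'$. First I apply the unlinking Legendrian isotopy of Proposition~\ref{unlink} to each, so that $lk(K,J)=lk(K',J)=0$ while both relative invariants and the Legendrian isotopy class are unchanged (invariance of $\widetilde{r}_n$ using that $\xi_n$ is globally trivialized). Next I use the standard framed Legendrian embedding $\varphi\colon(S^1\times D^2,\xi_n)\hookrightarrow(S^3,\xi_{std})$ onto a neighborhood $N(U)$ of a Legendrian unknot $U$ with $tb(U)=-n$ and fixed rotation number, exactly as in the proof of Theorem~\ref{tbnbound}, sending $J$ to $U$ and the product framing to the Seifert framing. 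Because $K$ is unlinked from $J$, the disc bounded by $U$ can be made disjoint from $\varphi(K)$, so $\varphi(K)$ is split from $U$ and, $U$ being an unknot, has knot type $\kappa_1$ in $S^3$; the same holds for $\varphi(K')$.

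The dictionary between the relative and classical invariants established in the proof of Theorem~\ref{tbnbound} (together with the additivity $\widetilde{tb}_n(\hat K_1\#J',J)=tb(\hat K_1)+1$, $\widetilde{r}_n(\hat K_1\#J',J)=r(\hat K_1)$ coming from Lemma~\ref{sum}) gives
\[
tb(\varphi(K))=\widetilde{tb}_n(K,J)-n,\qquad r(\varphi(K))=\widetilde{r}_n(K,J)+r(U),
\]
and likewise for $K'$; hence $tb(\varphi(K))=tb(\varphi(K'))$ and $r(\varphi(K))=r(\varphi(K'))$. Since $\kappa_1\in\mathfrak{L}$ is Legendrian simple, $\varphi(K)$ and $\varphi(K')$ are Legendrian isotopic in $(S^3,\xi_{std})$. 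It remains to upgrade this to a Legendrian isotopy of the \emph{links} $\varphi(K)\cup U$ and $\varphi(K')\cup U$ preserving the solid torus $N(U)$: for then the Legendrian Isotopy Extension theorem gives an ambient contact isotopy preserving $N(U)$, and pulling back by $\varphi^{-1}$ yields the desired contact isotopy carrying $K$ to $K'$ in $(S^1\times D^2,\xi_n)$.

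This last step is the main obstacle, and it is where the Etnyre–Honda connected-sum machinery is needed. Both $\varphi(K)\cup U$ and $\varphi(K')\cup U$ are \emph{split} Legendrian links, separated by an embedded sphere which, after being made convex, carries a single dividing curve by Giroux's Criterion; any two such splitting spheres are isotopic, matching characteristic foliations, by the uniqueness of the contact connected sum (Remark~\ref{contactsum}, Theorem~\ref{colin}). On the $U$-side the pieces are the identical standard unknot $U$, while on the other side $\varphi(K)$ and $\varphi(K')$ lie in a common Darboux ball, where Legendrian simplicity of $\kappa_1$ with matching $(tb,r)$ makes them Legendrian isotopic through Legendrians in that ball. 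Assembling these across the matched splitting spheres produces a Legendrian isotopy of the split unions that fixes a standard neighborhood of $U$ and hence preserves $N(U)$; restricting it and transporting by $\varphi^{-1}$ completes the classification, so $\kappa_1\#J\in\mathfrak{L}''$ and therefore $\mathfrak{L}''=\mathfrak{L}'=\Phi(\mathfrak{L})$. The same dictionary of invariants moreover identifies, within each knot type, the Legendrian isotopy classes over $(S^3,\xi_{std})$ with those over $(S^1\times D^2,\xi_n)$ by an affine shift of the invariants, so $\Phi\colon\mathfrak{L}\to\mathfrak{L}''$ is the asserted bijective correspondence.
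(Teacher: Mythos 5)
Your overall bookkeeping is fine: given the paper's definitions, $\Phi(\mathfrak{L})=\mathfrak{L}'$ by construction, so reducing the lemma to the single implication ``$\kappa_1$ Legendrian simple $\Rightarrow$ $\kappa_1\#J$ relatively Legendrian simple'' (i.e.\ $\mathfrak{L}'=\mathfrak{L}''$), plus smooth uniqueness of the local summand, is legitimate (the paper additionally proves the converse implication as its ``onto'' step, which under the literal definitions is vacuous). The first two thirds of your argument are also sound and closely parallel the paper's Section 6 rather than its proof of this lemma: unlinking via Proposition~\ref{unlink}, the framed embedding $\varphi$ onto $N(U)$ as in Theorem~\ref{tbnbound}, the affine dictionary between relative and classical invariants (the sign discrepancy with the paper, $tb(\varphi(K))=\widetilde{tb}_n(K,J)\mp n$, is harmless since the shift is the same for $K$ and $K'$), and splitness of $\varphi(K)\cup U$ (which, note, uses the geometric position produced by Proposition~\ref{unlink} --- a $0$-framed longitude outside a concentric sub-torus containing $K$ --- not merely $lk=0$).

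The genuine gap is your final localization step: ``fixes a standard neighborhood of $U$ and hence preserves $N(U)$'' is a non sequitur, and in fact the construction cannot preserve $N(U)$. Since $\varphi(K)$ has winding number one in $N(U)$, it is homologically essential in $N(U)\setminus U$, so the ball cut out by the splitting sphere that contains $\varphi(K)$ necessarily exits $N(U)$; your Darboux-ball isotopy therefore moves points outside $N(U)$ even while fixing a small tube around $U$, and $\varphi^{-1}$ cannot be applied to it. What you actually produce is a Legendrian isotopy of the links $\varphi(K)\cup U$ and $\varphi(K')\cup U$ in $(S^3,\xi_{std})$, and deducing from this a Legendrian isotopy of $K$ to $K'$ inside $(S^1\times D^2,\xi_n)$ is precisely the nontrivial content of the lemma: Legendrian knots can be isotopic in a larger contact manifold without being isotopic in a sub-solid-torus (this is the same subtlety that makes solid-torus/satellite classification strictly finer than classification in $S^3$). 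The paper's proof avoids this by never leaving the solid torus: it exhibits $K_i=K_i'\#J_i'$ as relative Legendrian connected sums, uses a contact isotopy of $(S^1\times D^2,\xi_n)$ to make the summand spheres and the $3$-balls they bound coincide, and then confines the comparison of the $S^3$-summands $K_i'$ to that single standard $3$-ball (pushing the $S^3$-isotopy off a point, using the classification of tight structures on $S^3$ and the ball). To repair your argument you would need to carry out the splitting-sphere matching inside $(S^1\times D^2,\xi_n)$ as the paper does, rather than in $S^3$ after embedding.
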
 

\begin{proof} The mapping is given by the relative Legendrian connected sum. To see that it is onto, we take a knot type $\kappa_{S^1\times D^2}\in\mathfrak{L}''$ and note that the splitting of a Legendrian connected sum gives a unique knot type $\kappa_{S^3}$, so we need to show $\kappa_{S^3}\in\mathfrak{L}$. Given Legendrian knots $K_i'$ in $(S^3,\xi_{std})$ with $K_i'=\partial\Sigma_i'$ of smooth knot type $\kappa_{S^3}$ with equal classical invariants $tb_{\Sigma_i'}(K_i')$ and $r_{\Sigma_i'}(K)$, form the Legendrian connected sums $K_i=K_i'\#J_i'\subset (S^1\times D^2,\xi_n)\cong(S^3,\xi_{std})\#(S^1\times D^2,\xi_n)$. Note that $J_i'$ has equal relative invariants with respect to $J$, and $K_i'=K\# J'$ is homologous to $J$ via the Seifert surface $\Sigma'=\Sigma\# A$. Therefore, $\widetilde{tb}_{\Sigma_i'}(K_i',J)=tb_{\Sigma_i}(K_i)+1$ and $\widetilde{r}_{\Sigma_i'}(K_i',J_i')=r_{\Sigma_i}(K_i)$. So the $K_i'$ are relatively Legendrian isotopic in $(S^1\times D^2,\xi_n)$. Extend this Legendrian isotopy to a contact isotopy of $(S^1\times D^2,\xi_n)$ such that the separating 2-spheres and the 3-balls they bound in $(S^1\times D^2,\xi_n)$ coincide. Thus, the Legendrian isotopy reduces to an isotopy within the (now single) 3-ball containing the parts of the $K_i'$ corresponding to the summands $K_i$. Now consider a contact isotopy of $(S^3,\xi_{std})$ which Legendrian isotops the knots $K_i'$ so that they coincide at a point (on a neighborhood of that point, in fact). Such an isotopy exists (in the front projection, it is a horizontal and vertical translation). So then we can use the Legendrian isotopy fixing a common point that we found in the 3-ball above. The composition of these two Legendrian isotopies gives us the desired Legendrian isotopy from $K_1$ onto $K_2$ in $(S^3,\xi_{std})$. Note that we are using the classification of tight contact structures in $S^3$ and the standard 3-ball. To see that the mapping is injective, note that the relative Legendrian connect sum defines the knot type $\kappa_{S^1\times D^2}$ uniquely for a given knot type $\kappa_{S^3}$ in the smooth category. For a Legendrian simple $\kappa_{S^3}$, we show that $\kappa_{S^1\times D^2}$ is relatively Legendrian simple. Consider two Legendrian knots $K_1$ and $K_2$ in $(S^1\times D^2,\xi_n)$ of knot type$\kappa_{S^1\times D^2}$ with equal relative invariants. Then $K_i=K_i'\#J_i$ for $K_1'$ and $K_2'$ of knot type $\kappa_{S^3}$. The relative Legendrian connected sum replaces a standard 3-ball neighborhood of a point $p_i\in J_i'$ with the standard 3-ball complement of a 3-ball neighborhood of a point on each knot $K_i'$. Use a contact isotopy to Legendrian isotop $J_1'$ to $J_2'$ so that the points $p_i$ coincide and the 3-ball neighborhoods of those coincide as well. After the connected sum, we further extend the isotopy using the Legendrian isotopy between the $K_i'$ from a 3-ball complement of a point in $S^3$ (this is contained in the standard 3-ball that they are embedded in). Thus, $K_i=K_i'\#J_i'$ are Legendrian isotopic in ${S^1\times D^2}$ provided that the relative classical invariants of the knots $K_i=K_i'\#J_i'$ with respect to $J$ are equal. 
\end{proof}

\begin{remark}
In order to piece together the Legendrian isotopies, equality of the relative invariants is not sufficient, we may need to distribute stabilizations among the components. Thus the bijective correspondence holds up to an equivalence relation that accounts for this (see Theorem \ref{relstr} and compare with Theorem 3.4 in \cite{etnyrehonda:knots2}).
\end{remark}

\begin{remark} 
The results in Section 7 follow from Lemma \ref{ministr}. Moreover, the argument generalizes to classify all Legendrian simple knot types in $(S^3,\xi_{std})$ as relatively Legendrian simple knot types in $(S^1\times D^2,\xi_n)$.
\end{remark} 

Lemma \ref{ministr} applies to Legendrian links in $(S^3,\xi_{std})$ with a trivial component.

\begin{theorem} 
Let $K\in \mathfrak{L}$ and $U$ be an unknot. Legendrian simple links $K\cup U$ in $(S^3,\xi_{std})$ are classified by the link type and relative invariants of $K$ relative to $U$.
\end{theorem}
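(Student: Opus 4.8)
The plan is to deduce this theorem as a direct application of Lemma \ref{ministr}, translating the statement about Legendrian links $K \cup U$ in $(S^3, \xi_{std})$ into a statement about Legendrian knots in $(S^1 \times D^2, \xi_n)$ relative to the core. The key observation is that the unknot component $U$ plays the role of the reference knot, and its framed Legendrian neighborhood furnishes the solid torus in which the remaining component $K$ lives. First I would take a framed Legendrian neighborhood $N(U)$ of the unknot $U$, noting that $tb(U) = -n$ for the appropriate $n$, so that $(N(U), \xi_{std}\rvert_{N(U)})$ is contactomorphic to $(S^1 \times D^2, \xi_n)$ with $U$ sent to the Legendrian core $J$. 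Under this identification, the complement $S^3 \setminus N(U)$ is again a solid torus, and $K$ (being linked with $U$) sits inside a solid torus neighborhood in which $J = U$ is the core.

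Next I would show that the classical link invariants of $K \cup U$ correspond exactly to the relative invariants of $K$ with respect to $J = U$. Specifically, the relative Thurston-Bennequin invariant $\widetilde{tb}(K, U)$ and relative rotation number $\widetilde{r}(K, U)$ with respect to the annulus or Seifert surface cobounded by $K$ and $U$ encode precisely the framing data and rotation data that the classical invariants $tb(K)$, $r(K)$, together with the linking number $lk(K, U)$, carry. The linking number is recorded by the homology class, and the relative invariants then capture the twisting of the contact planes along $K$ measured against the reference framing supplied by $U$. This is the content of the earlier discussion identifying $\widetilde{tb}_n$ with $tb_{\Sigma'}(\varphi(K)) - n$ via the framing-preserving contactomorphism $\varphi$.

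With this translation in place, the classification follows from Lemma \ref{ministr}: since $K \in \mathfrak{L}$ is Legendrian simple in $(S^3, \xi_{std})$, Lemma \ref{ministr} gives that the corresponding relative knot type in $(S^1 \times D^2, \xi_n)$ is relatively Legendrian simple with respect to the core $J$. Thus two Legendrian links $K_1 \cup U_1$ and $K_2 \cup U_2$ with the same smooth link type and equal relative invariants $\widetilde{tb}(K_i, U_i)$ and $\widetilde{r}(K_i, U_i)$ are Legendrian isotopic. To carry this out I would first isotop $U_1$ to $U_2$ by a contact isotopy of $(S^3, \xi_{std})$, arranging their framed Legendrian neighborhoods to coincide, and then apply Lemma \ref{ministr} inside the common solid torus complement to produce a Legendrian isotopy between $K_1$ and $K_2$ that respects the reference unknot.

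The main obstacle I anticipate is ensuring that the classical invariants of the link $K \cup U$ are genuinely equivalent data to the link type together with the relative invariants of $K$ relative to $U$ — in particular, verifying that no information is lost or gained in passing between the two descriptions, and that the choice of Seifert surface (or annulus) used to define the relative invariants is compatible with the one induced by the embedding into the solid torus complement. As noted in the remark following Lemma \ref{ministr}, one may also need to distribute stabilizations among the components to match the knots up, so the precise statement of the correspondence holds up to that equivalence; I would make this explicit rather than claiming a naive bijection on the nose.
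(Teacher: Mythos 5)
Your proposal is correct and takes essentially the same route as the paper: the paper obtains this theorem as a direct application of Lemma~\ref{ministr}, identifying a framed Legendrian neighborhood of the unknot component with $(S^1\times D^2,\xi_n)$ (with $U$ playing the role of the core $J$ and $n=-tb(U)$), isotoping the unknot components to coincide via a global contact isotopy as described at the end of Section~8, and then applying the relative classification in the resulting solid torus, exactly as you outline. Your closing caveat about distributing stabilizations among components is precisely the paper's own remark following Lemma~\ref{ministr}, so no further comparison is needed.
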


\section{The relative structure theorem}

A homologous knot pair $(K,J)$ in a tight contact 3-manifold $(M,\xi)$ is \textit{relatively prime} if $(K,J)=(K_1,J_1)\#(K_2,J_2)$ in $(M_1,\xi_1)\#(M_2,\xi_2)$ implies $K_1\subset M_1$ or $K_2\subset M_2$ is the unknot. Let $S_\pm(K)$ be the positive/negative stabilization of $K$ and $\mathfrak{L}_{(\kappa, M,\xi)}$ be the set of isotopy classes of Legendrian representatives of $\kappa$ in $(M,\xi)$. We have a relative version of Theorem 3.4 by Etnyre-Honda in \cite{etnyrehonda:knots2}.

\begin{theorem}\label{relstr} 
Let $(\kappa,\gamma)=(\kappa_1,\gamma_1)\#\cdots\#(\kappa_n,\gamma_n)$ be a relative connected sum decomposition in a tight $(M,\xi)$ into relatively prime pairs $(\kappa_i,\gamma_i)\subset(M_i,\xi_i)$. The map $\Bigg{(}\dfrac{\big{(}\mathfrak{L}_{\kappa_1},\mathfrak{L}_{\gamma_1}\big{)}\times\cdots\times \big{(}\mathfrak{L}_{\kappa_n},\mathfrak{L}_{\gamma_n}\big{)}}{\sim}\Bigg{)}\longrightarrow \Big{(}\mathfrak{L}_{\kappa_1\#\cdots\#\kappa_n},\mathfrak{L}_{\gamma_1\#\cdots\#\gamma_n}\Big{)}$ is a bijection, where $\sim$ is defined by

\noindent(1) $\Big{(}\dots,(S_\pm(K_i),J_i),\dots,(K_j,S_\pm(J_j)),\dots\Big{)}\sim$\\

$\sim\hspace{2.2in}\Big{(}\dots,(K_i,S_\pm(J_i)),\dots,(S_\pm(K_j),J_j),\dots\Big{)}$\\
\noindent (2) $\Big{(}(K_1,J_1),\dots,(K_n,J_n)\Big{)}\sim\sigma\Big{(}(K_1,J_1),\dots,(K_n,J_n)\Big{)}$, where $\sigma$ is a permutation of $(\kappa_i,\gamma_i)$ so that $\sigma(M_i,\xi_i)$ is isotopic to $(M_i,\xi_i)$ and $\sigma(\kappa_i,\gamma_i)=(\kappa_i,\gamma_i)$ for all $i$.
\end{theorem}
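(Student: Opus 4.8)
The plan is to follow the architecture of the Etnyre--Honda structure theorem (Theorem 3.4 of \cite{etnyrehonda:knots2}), systematically replacing the absolute connected sum and classical invariants by the relative Legendrian connected sum of Remark \ref{relsum} and the relative invariants, using their additivity (Lemma \ref{reladd} together with the additivity of $\widetilde{r}$). The map in question sends a tuple $((K_1,J_1),\dots,(K_n,J_n))$ of prime Legendrian pairs to the pair obtained by iterating the relative Legendrian connected sum. I would organize the argument into three parts: well-definedness on $\sim$-classes, surjectivity, and injectivity.

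First I would verify that the map descends to $\sim$-classes. Invariance under the permutation relation (2) follows from the independence of the relative connected sum from the choices of balls, arcs, and gluing diffeomorphisms (the Corollary and Lemmas following Remark \ref{relsum}), together with commutativity of the contact connected sum (Remark \ref{contactsum}) and Colin's theorem (Theorem \ref{colin}), which guarantees that reordering summands in an isotopic $(M_i,\xi_i)$ does not change the result. Invariance under the stabilization-moving relation (1) is the relative analog of the key Etnyre--Honda phenomenon: a stabilization placed on the $K$-side of one summand can be slid across the connecting neck and reappear on the $J$-side of another summand. I would make this precise by isotoping the stabilizing zig-zag through the connect-sum region, using that the additivity of $\widetilde{tb}$ and $\widetilde{r}$ keeps the total invariants unchanged under such a move.

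For surjectivity, given a Legendrian pair $(K,J)$ of type $(\kappa,\gamma)$ in $(M,\xi)$, the smooth prime decomposition supplies a system of splitting spheres; I would make these convex and apply the splitting construction for relative Legendrian connected sums. Cutting along these spheres produces Legendrian pairs $(K_i,J_i)$ in the prime pieces $(M_i,\xi_i)$, each carrying a tight contact structure by Colin's theorem, and the additivity lemmas certify that the relative invariants distribute correctly across the decomposition. Because the pieces are relatively prime, this exhibits $(K,J)$ as the image of a tuple, so the map is onto.

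The hard part will be injectivity. Suppose two tuples of prime pairs yield Legendrian-isotopic connected sums. I would track a Legendrian isotopy between the two connected-sum pairs and study its interaction with a common system of convex splitting spheres. After applying Giroux flexibility and the uniqueness of the tight contact structure on the summand balls (Remark \ref{contactsum}), the isotopy can be arranged to respect the spheres except for possibly sliding stabilizations across the necks. The crux is to show that the only residual ambiguity --- which prime component carries a given stabilization, and the ordering of the factors --- is exactly the data encoded by relations (1) and (2); equivalently, that cutting along the two sphere systems produces prime pairs differing only by stabilization-sliding and permutation. This is the most delicate step, mirroring the subtlest part of the Etnyre--Honda argument, and it relies essentially on the classification of convex spheres via Giroux's criterion and on matching the discrete invariant data through Lemma \ref{reladd} and the additivity of the relative rotation number.
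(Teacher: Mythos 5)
Your plan re-proves the theorem from scratch by mirroring the Etnyre--Honda argument in the relative setting; the paper does something much shorter. Its entire proof is a deduction: it invokes the Etnyre--Honda structure theorem itself as a black box, combined with the relative Legendrian connected sum construction (Remark \ref{relsum} and the splitting remark that follows it) and the well-definedness of the relative invariants from \cite{georgi}. The point of the paper's route is that a relative connected sum induces ordinary Legendrian connected sums on the $K$-components and on the $J$-components within one and the same ambient decomposition $(M,\xi)=(M_1,\xi_1)\#\cdots\#(M_n,\xi_n)$, so the Etnyre--Honda theorem applied coordinate-wise already yields surjectivity and pins down the ambiguity as stabilization slides plus permutations; the convex-sphere analysis of a Legendrian isotopy against a splitting sphere system, which you correctly identify as the delicate step, is exactly the content of their proof and is not repeated in the paper. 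Your route buys self-containedness and would in fact verify relative-specific details the paper leaves implicit --- notably that the two coordinate-wise applications of Etnyre--Honda can be reconciled to a \emph{single} sphere system splitting both $K$ and $J$ simultaneously, which is what the relative splitting construction provides --- while the paper's route buys brevity at the cost of leaving that reconciliation unaddressed. So your architecture is sound, and arguably more honest than the citation the paper offers.

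One substantive caution, however: your gloss of relation (1) --- a stabilization on the $K$-side of one summand sliding across the neck to ``reappear on the $J$-side of another summand'' --- misreads the move, and as stated it is false. A stabilization of $K$ can never become a stabilization of $J$: transferring it would change $\widetilde{tb}(K,J)=tw_K(\xi,Fr_\Sigma)-tw_J(\xi,Fr_\Sigma)$ by $2$ (stabilizing $K$ lowers it by $1$, stabilizing $J$ raises it by $1$), and would also change the Legendrian isotopy class of the $K$-sum outright. Relation (1) is instead a \emph{pair} of simultaneous Etnyre--Honda slides, each staying on its own knot: the $K$-stabilization moves from summand $i$ to summand $j$ while the $J$-stabilization moves from summand $j$ to summand $i$, so the totals balance. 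Your well-definedness check under (1) should therefore be rewritten as two applications of the Etnyre--Honda stabilization-sliding lemma, one to the $K$-tuple and one to the $J$-tuple, rather than as an isotopy carrying a zig-zag from $K$ to $J$. Relatedly, since a one-sided slide (moving a $K$-stabilization alone, with all $J_i$ fixed) also preserves the image pair, your injectivity step should state explicitly which moves generate $\sim$ and check that the residual ambiguity you extract from the sphere-system analysis is exhausted by them.
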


This result follows directly from the structure theorem in \cite{etnyrehonda:knots2}, together with the construction of the relative Legendrian connected sum and the well-definedness of the relative invariants in \cite{georgi}.


\begin{thebibliography}{10}

\bibitem{aebisher}
B. Aebisher et al., {\em Symplectic geometry}, Progress Math. \textbf{124}, Birkh\"auser, Boston, MA, 1994.

\bibitem{bennequin}
D. Bennequin, {\em Entrelacements et \'equations de Pfaff}, Ast\'erisque \textbf{107--108} (1983), 87--161.

\bibitem{chekanov} Y. Chekanov, \emph{Differential algebras of Legendrian links} 

\bibitem{colin}
V. Colin, {\em Chirurgies d'indice un et isotopies de sph\`eres dans les vari$\acute{e}$t$\acute{e}$s de contact tendues}, C. R. Acad. Sci. Paris S\'er I Math. \textbf{324} (1997), 659--663.

\bibitem{yasha:20}
Y. Eliashberg, {\em Contact 3-manifolds twenty years since J. Martinet's work}, Ann. Inst. Fourier \textbf{42} (1992), 165--192.

\bibitem{yasha:legknots}
Y. Eliashberg, {\em Legendrian and transversal knots in tight contact 3-manifolds}, Topological Methods in Modern
Mathematics (1993),171--193.

\bibitem{yashafraser}
Y. Eliashberg and M. Fraser, {\em Classification of topologically trivial Legendrian knots}, Geometry,
Topology, and Dynamics (Montreal, PQ, 1995), 17--51.

\bibitem{etnyre:intro}
J. Etnyre, {\em Introductory lectures on contact geometry}, Proc. Sympos. Pure Math. \textbf{71} (2003),
81--107.

\bibitem{etnyre:knots}
J. Etnyre, {\em Legendrian and transversal knots}, Handbook of Knot Theory (Elsevier B. V., Amsterdam) (2005), 105--185.

\bibitem{etnyrehonda:knots1}
J. Etnyre and K. Honda, {\em Knots and contact geometry I: torus knots and the figure eight}, Journal of
Sympl. Geom. \textbf{1} (2001), no.1, 63--120.

\bibitem{etnyrehonda:knots2}
J. Etnyre and K. Honda, {\em On connected sums and Legendrian knots}, Adv. Math. \textbf{179} (2003), 59--74.

\bibitem{geiges:handbk}
H. Geiges, {\em Handbook of Differential Geometry, Vol. 2}, North-Holland, Amsterdam (2006), 315--382.

\bibitem{giroux:convex}
E. Giroux, {\em Convexit$\acute{e}$ et topologie de contact}, Comment. Math. Helv. \textbf{66} (1991), no. 4, 637--677.

\bibitem{giroux:what}
E. Giroux, {\em Une infinit$\acute{e}$ de structures de contact tendues sur une infinit$\acute{e}$ de
vari$\acute{e}$t$\acute{e}$s}, Invent. Math. \textbf{135} (1999),
789--802.

\bibitem{giroux:something}
E. Giroux, {\em Structures de contact sue les vari$\acute{e}$t$\acute{e}$s en cercles au-dessus d'une surface}, Comment. Math. Helv. \textbf{76} (2001), 218--262.

\bibitem{giroux:criterion}
E. Giroux, {\em Structures de contact en dimension trois et bifurcations des feuilletages de surfaces}, Invent. Math. \textbf{141} (2000), 615Ð689. 

\bibitem{georgi}
G. Gospodinov {\em A Homological Approach to Relative Knot Invariants}, in preparation.

\bibitem{gray}
J. Gray, {\em Some global properties of contact structures}, Ann. of Math. \textbf{69} (1959), 421--450.

\bibitem {honda:one}
K. Honda, {\em On the classification of tight contact structures I}, Geom. Topol. \textbf{4} (2000), 309--368.

\bibitem{kanda}
Y. Kanda, {\em The classification of tight contact structures on the 3-torus}, Comm. Anal. Geom. \textbf{5} (1997), no.3, 413--438.

\bibitem{ozbagcistip}
B. Ozbagci and A. I. Stipsicz, {\em Surgery on contact 3-manifolds and Stein surfaces}, J\'anos Bolyai Mathematical Society, Budapest, Hungary, 2004.

\bibitem{papi}
C. D. Papakyriakopoulos, {\em On Dehn's Lemma and the Asphericity of Knots}, Ann. Math. \textbf{66} (1957), 1--26.

\bibitem{torisu}
I. Torisu, {\em On the additivity of the Thurston--Bennequin invariant of Legendrian knots}, Pacific J. of
Mathematics \textbf{210} (2003), no. 2,. 359--366.

\bibitem{traynor}
L. Traynor, {\em Legendrian circular helix links},
Math. Proc. Camb. Phil. Soc. \textbf{122} (1997), 301--314.

\end{thebibliography}
\end{document}